\documentclass[final,1p,times,authoryear]{elsarticle}
\usepackage{amsmath}
\usepackage{amsthm}
\usepackage{amssymb}
\usepackage{amsfonts}
\usepackage{dsfont}

\usepackage{float}
\usepackage[plainpages=false,pdfpagelabels=true,colorlinks=true,citecolor=blue,hypertexnames=false]{hyperref}
\usepackage[ruled,vlined]{algorithm2e}
\usepackage{mathtools}
\usepackage{extarrows}
\usepackage{url}
\usepackage{balance}
\usepackage{multirow}
\usepackage{color}
\usepackage{diagbox}
\usepackage{booktabs}
\usepackage{makecell}
\usepackage{hyperref}
\newcolumntype{V}{!{\vrule width 1pt}}
\newtheorem{theorem}{Theorem}
\newtheorem{lemma}[theorem]{Lemma}
\newtheorem{corollary}[theorem]{Corollary}

\newtheorem{proposition}[theorem]{Proposition}
\newtheorem{definition}[theorem]{Definition}

\newtheorem{remark}[theorem]{Remark}
\newtheorem{problem}[theorem]{Problem}

\def\gcd{{\rm gcd}}
\def\diag{{\rm diag}}
\def\rank{{\rm rank}}
\def\deg{{\rm deg}}
\def\R{\mathcal{R}}
\def\M{\mathcal{M}}
\def\J{\mathcal{J}}
\def\I{\mathcal{I}}
\def\GL{{\rm GL}}
\def\K{\mathbb{K}}
\def\Aut{{\rm Aut}}
\def\TA{{\rm TA}}

\journal{Elsevier}

\begin{document}

\begin{frontmatter}

\title{Matrix equivalence to Smith normal form: new theoretical results for multivariate polynomial matrices}

\author[sju]{Dong Lu}
\ead{donglu@swjtu.edu.cn}

\author[sju]{Yuanyuan Ruan}
\ead{yuanyuanruan@my.swjtu.edu.cn}

\author[klmm,ucas]{Dingkang Wang}
\ead{dwang@mmrc.iss.ac.cn}

\author[hunu]{Fanghui Xiao\corref{cor}}
\ead{xiaofanghui@hunnu.edu.cn}

\cortext[cor]{Corresponding author}

\address[sju]{School of Mathematics, Southwest Jiaotong University, Chengdu 610031, China}

\address[klmm]{State Key Laboratory of Mathematical Sciences, Academy of Mathematics and Systems Science, Chinese Academy of Sciences, Beijing 100190, China}

\address[ucas]{School of Mathematical Sciences, University of Chinese Academy of Sciences, Beijing 100049, China}

\address[hunu]{MOE-LCSM, School of Mathematics and Statistics, Hunan Normal University, Changsha 410081, China}

\begin{abstract}
 This paper investigates the Smith normal form equivalence problem for multivariate polynomial matrices. Using methods from matrix theory and polynomial ideal theory, we prove that Frost and Storey's 1978 conjecture holds for a broad class of matrices: such a matrix is equivalent to its Smith normal form if and only if its reduced minors of each order generate the unit ideal. Moreover, by extending the original matrix class via automorphisms of the polynomial ring, we show that our framework applies in a substantially more general setting.
\end{abstract}

\begin{keyword}
 Matrix equivalence, Smith normal form, Multivariate polynomial matrices, Polynomial automorphisms

 \vskip 6 pt

 \noindent MSC(2020): 15A24, 68W30
\end{keyword}

\end{frontmatter}

\section{Introduction}

 Polynomial matrix theory serves as a foundational framework across diverse areas of mathematics and engineering, including symbolic computation \citep{Cox2007}, algebraic topology \citep{Noferini2025Smith}, and multidimensional systems theory \citep{Bose1982,Bose2003}. A central problem in this field is determining when a multivariate polynomial matrix is equivalent to its Smith normal form, a canonical diagonal representation that preserves the rank, determinantal divisors and invariant factors of the original matrix.

 For univariate polynomial matrices over a field, the equivalence problem is completely resolved: since the ring of univariate polynomials is a principal ideal domain (PID), every such matrix can be transformed into its Smith normal form via elementary row and column operations \citep{Gohberg1982Matrix}. However, this result fails to hold for multivariate polynomial rings (in two or more variables), which lack the PID structure. This fundamental difference has motivated decades of research into identifying conditions under which multivariate polynomial matrices admit Smith normal form equivalence.

 \cite{Frost1978Equ} proposed a landmark conjecture asserting that a bivariate polynomial matrix is equivalent to its Smith normal form if and only if the reduced minors (see Definition \ref{def_reduced_minors}) of each order of the matrix generate the unit ideal in the polynomial ring. However, \cite{Frost1981} themselves constructed a counterexample demonstrating that the unit ideal condition is only necessary, not sufficient, in the general case. This counterexample prompted further work to characterize special classes of multivariate polynomial matrices for which the condition is necessary and sufficient.

 Over the past four decades, significant progress has been made in this direction. One well-studied class consists of square matrices over $\K[x_1,x_2,\ldots,x_n]$ whose determinant is of the form $(x_1 - f(x_2, \dots, x_n))^t$, where $f\in \K[x_2,\ldots,x_n]$, and $n,t$ are integers with $n\geq 2$ and $t \geq 1$. \cite{Lin2006On} first proved that for $t = 1$, any such matrix is always equivalent to $\diag(1, \dots, 1, \det(F))$ without additional conditions. \cite{Liu2024} subsequently established that Frost and Storey's conjecture holds for all $t \geq 2$, showing that the unit ideal condition on reduced minors is necessary and sufficient in this general case. Furthermore, \cite{Liu2025The} extended this theory to square matrices whose determinant is of the form $(x_1 - f_1(x_2, \dots, x_n))^{t_1}(x_2 - f_2(x_3, \dots, x_n))^{t_2}$, proving that the same equivalence characterization remains valid.

 Another important line of research concerns multivariate polynomial matrices whose highest-order determinantal divisor is a univariate polynomial. \cite{LiD2019} initiated this line of research by showing that any square bivariate polynomial matrix whose determinant is an irreducible univariate polynomial is always equivalent to its Smith normal form. Subsequently, \cite{Zheng2023New} proved that Frost and Storey's conjecture holds for any square matrix over $\K[x_1,x_2]$ with determinant of the form $p^t$, where $p\in \K[x_1]$ is irreducible and $t$ is a positive integer. \cite{Guan2025NewR} further extended this result to square matrices over $\K[x_1,x_2,\ldots,x_n]$ of size at least $3$. Most recently, \cite{Lu2024arxiv} developed a localization-based approach to completely resolve the problem for arbitrary square matrices over $\K[x_1,x_2,\ldots,x_n]$ with univariate determinant, demonstrating that the reduced minor condition remains valid in full generality.

 Following the discussion of the above two classes of matrices, recent research has focused on combining these two types into new families of matrices. For example, \cite{Lu2025JSSC} provided a complete equivalence characterization for square matrices over $\K[x_1,x_2]$ with determinant of the form $f(x_1)(x_2-g(x_1))^t$, where $f,g\in \K[x_1]$ and $t$ is a positive integer. Subsequently, \cite{Lu2026Smith} developed a new method for investigating the Smith normal form equivalence problem for square matrices over $\K[x_1,x_2]$. Let $\deg_{x_2}(\det(F))$ denote the degree of $\det(F)$ with respect to (w.r.t.) the variable $x_2$. By regarding $\det(F)$ as an element of $\K[x_1][x_2]$, they proved that Frost and Storey's conjecture holds when $\deg_{x_2}(\det(F)) \leq 1$, i.e., $\det(F)  =f(x_1) \cdot x_2 + g(x_1)$ for some $f,g\in \K[x_1]$, while counterexamples always exist demonstrating that the conjecture fails when $\deg_{x_2}(\det(F)) \geq 2$.

 Despite these significant advances, the general equivalence problem for multivariate polynomial matrices remains a fundamental open problem. \cite{Liu2025The} raised the following open question: given a square matrix $F$ over $\K[x_1,x_2,\ldots,x_n]$ with determinant of the form
 \[\det(F) = (x_1-f_1(x_2,\ldots,x_n))^{t_1}\cdot (x_2-f_2(x_3,\ldots,x_n))^{t_2}\cdots (x_{n-1}-f_{n-1}(x_n))^{t_{n-1}}\cdot (x_n - \alpha)^{t_n},\]
 where $f_i\in \K[x_{i+1},\ldots,x_n]$ for $i=1,\ldots,n-1$, $\alpha\in \K$, and $t_1,\ldots,t_n$ are nonnegative integers, what are the necessary and sufficient conditions for $F$ to be equivalent to its Smith normal form? Building on the question posed by Liu et al., we consider the following significantly broader class of matrix equivalence problems.

\begin{problem}\label{main-problem}
 Let $F\in \M_{l\times l}(\K[x_1,x_2,\ldots,x_n])$ be of full rank such that
 \[\det(F) = f_1(x_1)\cdot (x_2-f_2(x_1))^{t_2}\cdot(x_3 - f_3(x_1,x_2))^{t_3}
 \cdots (x_n-f_n(x_1,\ldots,x_{n-1}))^{t_n},\]
 where $f_1\in \K[x_1]$, $f_i\in \K[x_1,\ldots,x_{i-1}]$ for $i=2,\ldots,n$, and  $t_2,\ldots,t_n$ are nonnegative integers. Prove that $F$ is equivalent to its Smith normal form if and only if the reduced minors of each order of $F$ generate the unit ideal in $\K[x_1,x_2,\ldots,x_n]$.
\end{problem}

 For notational convenience, we reverse the order of variables in Problem \ref{main-problem} relative to that of Liu et al. and extend their final term $(x_n - \alpha)^{t_n}$. Specifically, the term $x_i-f_i(x_{i+1},\ldots,x_n)$ in Liu et al.'s formulation corresponds to $x_{n-i+1} - f_{n-i+1}(x_1,\ldots,x_{n-i})$ for $i=1,\ldots,n-1$, and the term $(x_n - \alpha)^{t_n}$ corresponds to $f_1(x_1)$ in our setting. Crucially, while $(x_n - \alpha)^{t_n}$ is merely a power of a constant shift in $x_n$, our corresponding term $f_1(x_1)$ is an arbitrary univariate polynomial over $\K$. This is a nontrivial generalization that strictly extends the class of matrices studied by Liu et al. Moreover, we prove that Frost and Storey's conjecture holds in this more general setting.

 The rest of the paper is organized as follows. Section \ref{sec_Pre} introduces basic definitions and preliminary results on polynomial matrices, reduced minors, and matrix equivalence. Section \ref{sec_main-results} presents our main theorems and detailed proofs. Section \ref{sec_general} extends our results to more general settings, including rank-deficient and non-square matrices, and extensions under polynomial ring automorphisms. Section \ref{sec_conclusions} concludes the paper and outlines directions for future research.

\section{Preliminaries}\label{sec_Pre}

 Let $\mathbb{K}$ be a field with algebraic closure $\overline{\mathbb{K}}$, and let $\R = \K[x_1,\ldots,x_n]$ be the polynomial ring over $\K$ in the variables $x_1,\ldots,x_n$, where $n\geq 2$. Given $h_1,\ldots,h_l\in \R$, let $\langle h_1,\ldots,h_l \rangle_{\R}$ denote the ideal of $\R$ generated by $h_1,\ldots,h_l$, and let $\diag(h_1,\ldots,h_l)$ denote the $l\times l$ diagonal matrix with diagonal entries $h_1,\ldots,h_l$. For brevity, we define
 \[\R_i \triangleq \mathbb{K}[x_1,\ldots,x_{i-1},x_{i+1},\ldots,x_n] ~ \text{for} ~ i=2,\ldots,n,\]
 where each $\R_i$ is a polynomial ring over $\mathbb{K}$ in $n-1$ variables. Furthermore, define
 \[ \varphi_i \triangleq x_i - f_i(x_1,\ldots,x_{i-1})  ~ \mbox{for} ~ i=2,\ldots,n,\]
 where each $f_i\in \K[x_1,\ldots,x_{i-1}]$. Clearly, each $\varphi_i$ is irreducible in $\R$.

 We denote by $\mathcal{M}_{l\times m}(\R)$ the set of $l\times m$ matrices with entries in $\R$, where $l$ and $m$ are positive integers. Let $F\in \mathcal{M}_{l\times m}(\R)$. For each integer $i$ with $1\leq i \leq \min\{l,m\}$, let $\mathcal{I}_i(F)$ be the ideal of $\R$ generated by all $i\times i$ minors of $F$, and let $d_i(F)$ be the greatest common divisor of these minors. We call $d_i(F)$ the $i$-th determinantal divisor of $F$. By convention, we set $d_0(F) \equiv 1$.

\subsection{Basic Notions}

 Using the determinantal divisor defined above, we define the Smith normal form of a matrix over $\R$.

\begin{definition}
 Let $F \in \M_{l \times m}(\R)$ with rank $\gamma$, where $1\leq \gamma \leq \min\{l,m\}$. For each $i=1,\ldots,\gamma$, define the $i$-th invariant factor $h_i\in \R$ of $F$ by
 \[h_i \triangleq \frac{d_{i}(F)}{d_{i-1}(F)}.\]
 The Smith normal form of $F$ is given by
 \[S_F = \begin{pmatrix}
     {\rm diag}(h_1,\ldots, h_\gamma) &  0_{\gamma \times (m-\gamma)}  \\
         0_{(l-\gamma)\times \gamma}    &  0_{(l-\gamma)\times (m-\gamma)}
     \end{pmatrix}.\]
\end{definition}

 It is known that the invariant factors $h_1,\ldots,h_\gamma$ satisfy the divisibility relations $h_1\mid \cdots \mid h_r$, as shown by \cite{Li2025Smith} using localization techniques. This shows that the Smith normal form for multivariate polynomial matrices constitutes a natural generalization of the Smith normal form for $\lambda$-matrices.

 We now recall the definition of unimodular matrices over $\R$, which will be used throughout the paper.

 \begin{definition}
  Let $U\in \M_{l\times l}(\R)$. Then $U$ is said to be unimodular if $\det(U)$ is a unit in $\R$. The set of all $l\times l$ unimodular matrices over $\R$ is denoted by $\GL_l(\R)$.
 \end{definition}

 Using unimodular matrices, we define matrix equivalence over $\R$.

 \begin{definition}
  Let $F,Q\in \M_{l\times m}(\R)$. We say that $F$ is equivalent to $Q$ over $\R$ if there exist $U\in \GL_l(\R)$ and $V\in \GL_m(\R)$ such that $UFV=Q$. The notation $F \sim_{\R} Q$ indicates that $F$ and $Q$ are equivalent over $\R$.
 \end{definition}

 To prove our main results on matrix equivalence, we introduce the notion of reduced minors, which will serve as our primary criterion for equivalence.

 \begin{definition}[\cite{Lin1988}]\label{def_reduced_minors}
  Let $F \in \M_{l\times m}(\R)$ with rank $\gamma$, where $1\leq \gamma \leq \min\{l,m\}$. For any given integer $k$ with $1 \leq k \leq \gamma$, let $a^{(k)}_{1}, \ldots, a^{(k)}_{\beta_{k}}$ be all $k \times k$ minors of $F$, where $\beta_{k}={l \choose k}{m \choose k}$. Extracting $d_{k}(F)$ from $a^{(k)}_{1}, \ldots, a^{(k)}_{\beta_{k}}$ yields
  \[ a^{(k)}_{j}=d_{k}(F) \cdot b^{(k)}_{j}, ~ j=1, \ldots, \beta_{k}.\]
  Then, $b^{(k)}_{1}, \ldots, b^{(k)}_{\beta_{k}}$ are called all $k \times k$ reduced minors of $F$. For convenience, we use $\mathcal{J}_k(F)$ to denote the ideal of $\R$ generated by $b^{(k)}_{1}, \ldots, b^{(k)}_{\beta_{k}}$.
 \end{definition}

 To compute minors of product matrices, which will be essential for analyzing the reduced minors of matrices, we need the classical Cauchy-Binet formula.

\begin{proposition}[Cauchy-Binet Formula, \cite{Strang2010Linear}]\label{binet-cauchy}
 Let $A\in \M_{l\times m}(\R)$, $B\in \M_{l\times t}(\R)$ and $C\in \M_{t\times m}(\R)$ satisfy $A = BC$, where $t$ is a positive integer. Then an $r\times r$ minor of $A$ is
 \begin{equation*}
  \det \left( A\begin{pmatrix}\begin{smallmatrix}i_1~\cdots ~i_r \\ j_1~\cdots ~j_r \end{smallmatrix}\end{pmatrix} \right)  =
  \sum_{1\leq s_1<\cdots<s_r\leq t}\det\left(B\begin{pmatrix}\begin{smallmatrix}
  i_1~\cdots~ i_r \\ s_1~\cdots~ s_r\end{smallmatrix}\end{pmatrix} \right)\cdot
  \det \left( C\begin{pmatrix}\begin{smallmatrix}s_1~\cdots ~s_r \\ j_1~\cdots ~j_r\end{smallmatrix}\end{pmatrix} \right),
 \end{equation*}
 where $1 \leq r \leq {\rm min}\{l,t,m\}$.
\end{proposition}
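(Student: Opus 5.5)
The plan is to reduce to the case of an $r\times t$ matrix times a $t\times r$ matrix and then expand the determinant by multilinearity in the columns. Fix row indices $i_1<\cdots<i_r$ and column indices $j_1<\cdots<j_r$. Let $B'\in\M_{r\times t}(\R)$ be the submatrix of $B$ formed by rows $i_1,\ldots,i_r$, and $C'\in\M_{t\times r}(\R)$ the submatrix of $C$ formed by columns $j_1,\ldots,j_r$. Entrywise from $A=BC$, the submatrix $A\begin{pmatrix}\begin{smallmatrix}i_1~\cdots ~i_r \\ j_1~\cdots ~j_r \end{smallmatrix}\end{pmatrix}$ equals $B'C'$. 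It therefore suffices to prove
\[\det(B'C')=\sum_{1\leq s_1<\cdots<s_r\leq t}\det\bigl(B'(\cdot\,;s_1,\ldots,s_r)\bigr)\cdot\det\bigl(C'(s_1,\ldots,s_r;\cdot)\bigr),\]
where the first factor uses columns $s_1,\ldots,s_r$ of $B'$ and the second uses rows $s_1,\ldots,s_r$ of $C'$. Since $\R$ is commutative, the determinant over $\R$ is given by the Leibniz formula and is multilinear and alternating in the columns. These are the only properties needed.

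Write $\beta_1,\ldots,\beta_t$ for the columns of $B'$ and $c_{s,k}$ for the entries of $C'$. The $k$-th column of $B'C'$ is $\sum_{s=1}^t c_{s,k}\beta_s$. Expanding $\det(B'C')$ by multilinearity in each of the $r$ columns gives
\[\det(B'C')=\sum_{\sigma:\{1,\ldots,r\}\to\{1,\ldots,t\}}\Bigl(\prod_{k=1}^r c_{\sigma(k),k}\Bigr)\det(\beta_{\sigma(1)},\ldots,\beta_{\sigma(r)}).\]
If $\sigma$ is not injective, two columns coincide and the alternating property kills the term. Every injective $\sigma$ factors uniquely as $\sigma(k)=s_{\pi(k)}$, where $\{s_1<\cdots<s_r\}$ is its image and $\pi\in S_r$ is a permutation. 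Permuting columns gives $\det(\beta_{s_{\pi(1)}},\ldots,\beta_{s_{\pi(r)}})={\rm sgn}(\pi)\det(\beta_{s_1},\ldots,\beta_{s_r})$.

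Grouping the injective terms by their image set, each set $\{s_1<\cdots<s_r\}$ contributes
\[\det(\beta_{s_1},\ldots,\beta_{s_r})\sum_{\pi\in S_r}{\rm sgn}(\pi)\prod_{k=1}^r c_{s_{\pi(k)},k}.\]
By the Leibniz formula, the inner sum is exactly $\det\bigl(C'(s_1,\ldots,s_r;\cdot)\bigr)$. Summing over all $r$-subsets of $\{1,\ldots,t\}$ yields the claimed formula. The hypothesis $r\leq\min\{l,t,m\}$ guarantees that the minors in question exist and that the index set of the sum is nonempty. The only delicate step is the sign bookkeeping when passing from injective maps $\sigma$ to the pairs $(\{s_1<\cdots<s_r\},\pi)$. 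I would check it carefully, making sure the permutation acts on column positions consistently with how the Leibniz formula for $C'$ is indexed. Everything else is formal.
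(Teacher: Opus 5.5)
Your proof is correct and complete. The reduction to $\det(B'C')$, the multilinear expansion over maps $\sigma$, and the vanishing of the non-injective terms all check out. The last step holds over any commutative ring such as $\R$, in any characteristic, because the Leibniz formula gives zero for two equal columns. The sign bookkeeping is also consistent: with $\sigma(k)=s_{\pi(k)}$ the column permutation contributes $\mathrm{sgn}(\pi)$, and $\sum_{\pi}\mathrm{sgn}(\pi)\prod_k c_{s_{\pi(k)},k}$ is exactly the column-indexed Leibniz expansion of the $r\times r$ minor of $C'$ on rows $s_1,\ldots,s_r$.

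The paper does not prove this proposition; it cites it as a classical result (Strang), and your multilinearity argument is the standard textbook proof.
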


 Using the Cauchy-Binet formula, we can easily prove the following fundamental invariance property of matrix equivalence. We omit the detailed proof for brevity.

 \begin{proposition}\label{lemma-reduced-2}
  Let $A,B\in \M_{l\times m}(\R)$. If $A \sim_{\R} B$, then $\I_i(A)=\I_i(B)$, $d_i(A) = d_i(B)$, and $\J_i(A) = \J_i(B)$ for $i=1,\ldots,\min\{l,m\}$.
 \end{proposition}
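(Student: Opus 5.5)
We prove Proposition \ref{lemma-reduced-2} by applying the Cauchy-Binet formula (Proposition \ref{binet-cauchy}) to the two products $UAV$ and $U^{-1}BV^{-1}$. Suppose $U\in\GL_l(\R)$ and $V\in\GL_m(\R)$ satisfy $UAV=B$. Then $U^{-1}$ and $V^{-1}$ are also unimodular, because their entries are polynomials (the adjugate divided by the unit $\det(U)$) and $\det(U^{-1})=\det(U)^{-1}$ is a unit. Hence $A=U^{-1}BV^{-1}$ as well, so it is enough to prove one inclusion and then use symmetry.

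\emph{Step 1 (ideals of minors).} Fix $i$ with $1\le i\le\min\{l,m\}$. Apply Proposition \ref{binet-cauchy} to the product $B=(UA)V$ and then to $UA=U\cdot A$. This writes every $i\times i$ minor of $B$ as an $\R$-linear combination of $i\times i$ minors of $A$, with coefficients that are products of $i\times i$ minors of $U$ and of $V$. Therefore $\I_i(B)\subseteq\I_i(A)$, and by symmetry $\I_i(A)=\I_i(B)$.

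\emph{Step 2 (determinantal divisors).} Since $\R$ is a UFD, $d_i(A)$ divides every $i\times i$ minor of $A$. By Step 1 it then divides every $i\times i$ minor of $B$, so $d_i(A)\mid d_i(B)$. Symmetry gives $d_i(B)\mid d_i(A)$, so the two agree up to a unit of $\R$, that is, a nonzero constant in $\K$. We normalize gcds so that $d_i(A)=d_i(B)$; at worst they differ by a constant factor, which changes nothing below. If $\I_i(A)=0$ (for $i$ larger than the rank), both sides are zero and the claim is trivial. This shows in particular that equivalent matrices have the same rank.

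\emph{Step 3 (reduced minors).} Let $d=d_i(A)=d_i(B)$, which is nonzero for $i\le\gamma$. Write each minor $a^{(i)}_j$ of $A$ as $d\,b^{(i)}_j$, and each minor of $B$ as $d\,c^{(i)}_k$. The linear combination from Step 1 reads $d\,c^{(i)}_k=\sum_j p_{kj}\,d\,b^{(i)}_j$. Since $\R$ is a domain, we may cancel $d$ and get $c^{(i)}_k=\sum_j p_{kj}b^{(i)}_j$. Hence $\J_i(B)\subseteq\J_i(A)$, and by symmetry the two ideals are equal.

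The only point needing care is Step 3: the cancellation of $d$ requires $d\neq0$ and requires the same $d$ on both sides. Both are guaranteed by Step 2 together with the restriction $i\le\gamma$, the range on which reduced minors are defined in Definition \ref{def_reduced_minors}. A constant discrepancy in $d$ would only rescale the generators, so the ideals would still coincide.
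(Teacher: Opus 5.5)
Your proof is correct and takes essentially the same approach as the paper, which omits the details but says the result follows from the Cauchy--Binet formula; that is your Step 1, and Steps 2 and 3 are the routine consequences. Your handling of the fine points is sound: you normalize the gcd up to a unit, you treat $\J_i$ only for $i\le\gamma$ (the range where reduced minors are defined), and you use the fact that $\I_i(A)=\I_i(B)$ forces equal ranks.
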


 The following lemma describes the behavior of determinantal divisors and reduced minors under matrix equavilence, which is essential for proving our main results. This result was originally proved by \cite{Lu2026Smith} for bivariate polynomial rings using the Cauchy-Binet formula, and we have verified that it holds for arbitrary multivariate polynomial rings.

 \begin{lemma}[\cite{Lu2026Smith}]\label{Lu-2026-lemma-LAA}
  Let $A,B,C\in \M_{l\times l}(\R)$ satisfy $A \sim_{\R} BC$. If $\gcd(\det(B),\det(C)) = 1$, then
  \begin{enumerate}
    \item $d_i(A) = d_i(B)\cdot d_i(C)$ for $i=1,\ldots,l$;

    \item for $i=1,\ldots,l$, if $\J_i(A) = \R$, then $\J_i(B) = \J_i(C) = \R$.
  \end{enumerate}
 \end{lemma}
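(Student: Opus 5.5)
Since $A \sim_{\R} BC$, Proposition \ref{lemma-reduced-2} lets me replace $A$ by $BC$ throughout: all $d_i$ and $\J_i$ are equivalence invariants. So I will take $A=BC$ and apply the Cauchy--Binet formula (Proposition \ref{binet-cauchy}). For fixed row indices $I$ and column indices $J$ of size $i$, it gives
\[
\det A(I,J)=\sum_{S}\det B(I,S)\det C(S,J).
\]
Every term on the right is divisible by $d_i(B)d_i(C)$. Hence $d_i(B)d_i(C)\mid d_i(A)$, and, writing $\delta=d_i(A)/(d_i(B)d_i(C))$, every $i\times i$ minor of $A$ lies in $d_i(B)d_i(C)\cdot\J_i(B)\J_i(C)$. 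When $\delta=1$ this gives $\J_i(A)\subseteq \J_i(B)\J_i(C)$, and part 2 follows from part 1. So the whole proof rests on showing $\delta=1$.

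To show $\delta$ is a unit, I argue by contradiction. Suppose an irreducible $p$ divides $\delta$. Since $\gcd(\det B,\det C)=1$, $p$ cannot divide both determinants; by symmetry (transpose, since $C^TB^T$ has the same minors transposed) assume $p\nmid\det C$. I then localize at the prime $\langle p\rangle$. The local ring $\R_{\langle p\rangle}$ is a DVR, hence a PID, and there $C$ becomes unimodular. Over this ring, $A=BC$ is equivalent to $B$, so the $p$-adic valuations satisfy $v_p(d_i(A))=v_p(d_i(B))$.

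On the other side, I need $v_p(d_i(C))=0$. This holds because $d_i(C)$ divides $d_l(C)=\det C$ up to the standard chain ($d_i\mid d_{i+1}$ via Laplace expansion), and $p\nmid\det C$. Therefore $v_p(\delta)=v_p(d_i(A))-v_p(d_i(B))-v_p(d_i(C))=0$, contradicting $p\mid\delta$. Hence $\delta=1$, which proves part 1.

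For part 2, I combine $\delta=1$ with the containment from the first paragraph: $\R=\J_i(A)\subseteq\J_i(B)\J_i(C)\subseteq\J_i(B)\cap\J_i(C)$, so both ideals are $\R$.

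The main obstacle is the localization step. Determinantal divisors are gcds in $\R$, and I must check that they commute with localization at $\langle p\rangle$ in the sense of valuations. This holds because $v_p$ of a gcd equals the minimum of $v_p$ over the generating minors, and that minimum is also the $p$-valuation of the ideal of $i\times i$ minors in the DVR. That ideal is invariant under equivalence over $\R_{\langle p\rangle}$ by Cauchy--Binet.
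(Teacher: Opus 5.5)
Your proof is correct. The paper gives no proof of its own: it defers to \cite{Lu2026Smith} and says only that the argument uses the Cauchy--Binet formula.

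Your first step matches that description. Cauchy--Binet gives $d_i(B)d_i(C)\mid d_i(A)$, and once $\delta$ is a unit it also gives $\J_i(A)\subseteq\J_i(B)\J_i(C)$.

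You depart from it for the key step of showing $\delta$ is a unit. You localize at $\langle p\rangle$ and use that $\R_{\langle p\rangle}$ is a DVR in which one of $B$, $C$ becomes invertible. A purely global Cauchy--Binet argument is also available:
\begin{itemize}
\item From $A\,\mathrm{adj}(C)=\det(C)\,B$ you get $d_i(A)\mid \det(C)^i d_i(B)$, hence $\delta\mid\det(C)^i$.
\item From $\mathrm{adj}(B)\,A=\det(B)\,C$ you get $d_i(A)\mid \det(B)^i d_i(C)$, hence $\delta\mid\det(B)^i$.
\item Coprimality of $\det B$ and $\det C$ then forces $\delta$ to be a unit.
\end{itemize}

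That version needs no valuations or local rings, and it shows exactly where $\gcd(\det B,\det C)=1$ enters. Yours handles one prime at a time and relies on the correct fact that $v_p$ of a gcd equals the $p$-order of the ideal of minors in the DVR. It is in the same localization spirit as the techniques of \cite{Lu2024arxiv} used elsewhere in the paper.

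Both arguments tacitly assume $\det B\det C\neq 0$, so that $\delta$ is defined. If, say, $\det B=0$, then $\gcd(0,\det C)=\det C$, so coprimality forces $\det C$ to be a unit and the lemma is trivial.
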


 The ideal-theoretic condition $\J_i(A) = \R$ appearing in Lemma \ref{Lu-2026-lemma-LAA} admits a natural geometric interpretation. To make this precise, we first recall the standard definition of an affine variety.

 \begin{definition}[\cite{Cox2007}]
  Let $\mathcal{I} \subseteq \R$ be an ideal. Then the affine variety defined by $\mathcal{I}$ is
  \[\mathds{V}(\mathcal{I}) = \{\vec{\omega}\in \overline{\K}^{n} \mid h(\vec{\omega}) = 0 ~\text{for all}~ h\in \mathcal{I}\}.\]
 \end{definition}

 \cite{LiD2022The} established the fundamental correspondence between ideals and their associated affine varieties:
 \[\mathds{V}(\mathcal{I}) = \emptyset ~ \text{if and only if} ~ \mathcal{I} = \R.\]

 \subsection{Quillen-Suslin Theorem}

 To prepare for extending our main results (to be presented in subsequent sections) to the more general settings of non-square and rank-deficient matrices, we now introduce the key notion of zero prime matrices and recall the classical Quillen-Suslin theorem, a cornerstone of modern polynomial matrix theory.

 We begin with the definition of zero left prime and zero right prime matrices, which underpin the analysis of polynomial matrix factorization and equivalence.

 \begin{definition}[\cite{Youla1979Notes}]
  Let $F \in \M_{l\times m}(\R)$ be of full row rank, where $l<m$. Then $F$ is said to be zero left prime (ZLP) if all $l \times l$ minors of $F$ generate the unit ideal in $\R$. Similarly, a matrix $F \in \M_{m\times l}(\R)$ of full column rank is said to be zero right prime (ZRP).
 \end{definition}

 The significance of zero prime matrices is highlighted by the Quillen-Suslin theorem, a landmark result obtained independently by \cite{Quillen1976Projective} and \cite{Suslin1976Projective} in their separate proofs of Serre's famous conjecture \citep{Serre1955,Lam1978}. This theorem provides a canonical reduction for ZLP matrices via unimodular transformations.

 \begin{theorem}[Quillen-Suslin Theorem, \cite{Quillen1976Projective,Suslin1976Projective}]\label{QS-theorem}
  Let $F \in \M_{l \times m}(\R)$ be a ZLP matrix, where $l<m$. Then there exists $U\in \GL_m(\R)$ such that $FU = \left(\mathbf{I}_l, 0_{l\times(m-l)}\right)$, where $\mathbf{I}_l$ denotes the $l\times l$ identity matrix.
 \end{theorem}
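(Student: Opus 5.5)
We plan to reduce the statement to the freeness of stably free modules over $\R$, and then obtain that freeness from Horrocks' theorem together with Quillen's patching, by induction on $n$.

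\emph{Step 1 (surjectivity and splitting).} Regard $F$ as the $\R$-linear map $\R^m\to\R^l$, $v\mapsto Fv$. We show it is surjective. Since the $l\times l$ minors generate $\R$, it suffices to show the image localizes to everything at each maximal ideal $\mathfrak m$. At a fixed $\mathfrak m$, some $l\times l$ minor is a unit in $\R_{\mathfrak m}$. The corresponding $l$ columns then form an invertible matrix over $\R_{\mathfrak m}$, so the localized map is onto. Choose $G\in\M_{m\times l}(\R)$ with $FG=\mathbf I_l$, and let $P=\ker F$. Then $\R^m=G\R^l\oplus P$, so $P$ is projective with $P\oplus\R^l\cong\R^m$, i.e.\ $P$ is stably free.

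\emph{Step 2 (reduction to freeness of $P$).} Suppose $P$ is free. Its rank is then $m-l$ (compare ranks after tensoring with the fraction field). Let $N\in\M_{m\times(m-l)}(\R)$ have columns forming a basis of $P$, and set $U=(G,\,N)\in\M_{m\times m}(\R)$. By the direct sum decomposition, the columns of $U$ form a basis of $\R^m$, so $U\in\GL_m(\R)$. Moreover $FU=(FG,FN)=(\mathbf I_l,0_{l\times(m-l)})$, as required. Everything therefore hinges on showing that finitely generated stably free (indeed projective) $\R$-modules are free.

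\emph{Step 3 (freeness; the main obstacle).} This is Serre's problem, and the heart of the proof is Step 3. We would follow Suslin's unimodular-row route, reducing by induction on the stable rank to the claim that every unimodular row $a\in\R^{1\times k}$ can be completed to an invertible matrix. The case $k=2$ is immediate, since $a_1b_1+a_2b_2=1$ gives a completion directly.
\begin{itemize}
\item For $k\ge3$, after a linear change of variables (Nagata's trick) we make one entry monic in $x_n$ over $A=\K[x_1,\ldots,x_{n-1}]$.
\item Horrocks' theorem then gives, for each maximal ideal $\mathfrak m$ of $A$, that $a$ is equivalent to $e_1$ over $A_{\mathfrak m}[x_n]$.
\item Quillen's local--global patching principle glues these local equivalences into a global one, showing $a(x_n)\sim a(0)$ over $A[x_n]$.
\item Since $a(0)$ lies over $A$, which has fewer variables, the induction hypothesis completes it.
\end{itemize}
The delicate points are the monicity normalization and the patching argument: a $\GL_k(A_{\mathfrak m}[x_n])$-equivalence is realized by elements that are congruent to the identity modulo $x_n$, and these local data are glued via a partition of unity in $A$. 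I expect this patching step to be the hardest to carry out carefully. Alternatively, one may simply invoke the Quillen--Suslin theorem on projective modules at this point and retain Steps 1--2 as the matrix-theoretic translation.
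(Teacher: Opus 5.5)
Your proposal is correct, but it follows a different route from the paper: the paper gives no argument for this statement and imports it directly from Quillen and Suslin.

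\textbf{What your reduction adds.} Quillen and Suslin proved that finitely generated projective $\K[x_1,\ldots,x_n]$-modules are free. Your Steps 1--2 supply the translation from that module statement to the matrix normal form, which the paper leaves implicit.
\begin{itemize}
\item The unit-ideal condition on the $l\times l$ minors makes $F$ surjective and gives a right inverse $G$. You could also build $G$ globally, without localizing: take $\sum_S c_S\det(F_S)=1$ over the $l\times l$ column submatrices $F_S$ and combine their adjugates.
\item Then $\ker F\oplus\R^l\cong\R^m$, so $\ker F$ is stably free of rank $m-l$.
\item Any basis $N$ of $\ker F$ gives $U=(G,N)\in\GL_m(\R)$ with $FU=(\mathbf{I}_l,0)$.
\end{itemize}
This makes explicit that the ZLP hypothesis is exactly what yields a split surjection, and that the matrix form is equivalent to freeness of stably free modules. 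The paper's citation buys brevity at the cost of hiding this step.

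\textbf{Step 3.} This is an outline of the standard Horrocks--Quillen patching proof for unimodular rows. As written, it leaves the patching lemma unproved. Your argument is therefore complete only if you take your stated fallback and invoke the module-theoretic Quillen--Suslin theorem at that point. That is exactly the level of citation the paper itself relies on. The phrase ``induction on the stable rank'' should read induction on $l$, the number of free summands split off.

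\textbf{Correction to Step 3.} A linear change of variables makes an entry monic in $x_n$ only when $\K$ is infinite. Over a finite field, the top-degree form can vanish at every rational point; for example, $x_1^qx_2-x_1x_2^q$ over $\mathbb{F}_q$. The paper allows an arbitrary field $\K$ wherever this theorem is used. You therefore need Nagata's genuinely nonlinear substitution, $x_i\mapsto x_i+x_n^{N^i}$ for $i<n$ with $N$ larger than every exponent occurring in the chosen entry. This automorphism makes that entry a nonzero constant times a monic polynomial in $x_n$, and it is what ``Nagata's trick'' actually refers to.
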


 Building on the Quillen-Suslin theorem, \cite{Wang2004On} proved a generalization of Serre's conjecture proposed by \cite{Lin2001A}, which yields the following powerful full-rank factorization property for arbitrary-rank matrices. This result will be indispensable for our later generalization arguments.

 \begin{lemma}[Lin-Bose Lemma, \cite{Wang2004On}]\label{Lin-Bose-theorem}
  Let $F\in \M_{l\times m}(\R)$ with rank $\gamma$, where $1\leq \gamma \leq \min\{l,m\}$. If $\J_{\gamma}(F) = \R$, then there exist $G\in \M_{l\times \gamma}(\R)$ and $H\in \M_{\gamma\times m}(\R)$ such that $F = GH$ with $H$ being a ZLP matrix.
 \end{lemma}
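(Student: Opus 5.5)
We construct $H$ as a basis of the saturation of the row space of $F$. Let $L=\K(x_1,\ldots,x_n)$ and let $W\subseteq L^{1\times m}$ be the $L$-span of the rows of $F$; it has dimension $\gamma$. Put $N \triangleq W\cap \R^{1\times m}$, an $\R$-submodule of $\R^{1\times m}$. Every row of $F$ lies in $N$. So it suffices to show that $N$ is free of rank $\gamma$ with a basis $H\in\M_{\gamma\times m}(\R)$ that is ZLP. Then each row of $F$ is an $\R$-combination of the rows of $H$, which gives $F=GH$.

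\textbf{Step 1 (Plücker description of the reduced minors).} Fix $\gamma$ rows of $F$ that are linearly independent over $L$, forming $F_1\in\M_{\gamma\times m}(\R)$. Since $\wedge^{\gamma}W$ is one-dimensional, the vector of $\gamma\times\gamma$ minors of any $\gamma$ rows of $F$, indexed by the column subsets $J$, is an $L$-multiple of the vector of maximal minors of $F_1$. Hence there is a primitive vector $(b_J)_J$ over $\R$ (its entries have gcd $1$) such that every $\gamma\times\gamma$ minor of $F$ with row set $I$ and column set $J$ equals $c_I b_J$ for some $c_I\in\R$. Taking gcds gives $d_\gamma(F)=\gcd_I(c_I)$ up to a unit. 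The reduced minors are then $(c_I/d_\gamma(F))\,b_J$, so the hypothesis $\J_\gamma(F)=\R$ forces $\langle b_J\rangle_\R=\R$.

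\textbf{Step 2 ($N$ is projective).} Let $\mathfrak p$ be a prime ideal of $\R$. Since the $b_J$ generate $\R$, some $b_J\notin\mathfrak p$, i.e.\ $b_J$ is a unit in $\R_{\mathfrak p}$. Consider the projection $\pi_J\colon N_{\mathfrak p}\to \R_{\mathfrak p}^{1\times\gamma}$ onto the columns in $J$.
\begin{itemize}
\item[(a)] \emph{Injectivity.} The $J$-columns of $F_1$ form an invertible matrix over $L$, so any vector of $W$ is determined by its $J$-coordinates.
\item[(b)] \emph{Surjectivity.} For each unit vector $e_k\in\R_{\mathfrak p}^{1\times\gamma}$, Cramer's rule applied to $F_1$ gives the unique $w\in W$ with $\pi_J(w)=e_k$. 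Its remaining coordinates are, up to sign, ratios $b_{J'}/b_J$ of Plücker coordinates, because replacing one column of $J$ by another column yields another maximal minor. These ratios lie in $\R_{\mathfrak p}$, so $w\in N_{\mathfrak p}$.
\end{itemize}
Thus $N_{\mathfrak p}\cong\R_{\mathfrak p}^{\gamma}$ for every $\mathfrak p$. Since $N$ is finitely generated ($\R$ is Noetherian), $N$ is projective of constant rank $\gamma$.

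\textbf{Step 3 ($N$ is free and $H$ is ZLP).} By Theorem~\ref{QS-theorem} (in its module form, Serre's conjecture), the projective module $N$ is free. Let the rows of $H\in\M_{\gamma\times m}(\R)$ be a basis of $N$. The maximal minors of $H$ are an $L$-multiple of $(b_J)$.

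To see that $H$ is ZLP, localize at any $\mathfrak p$. From Step 2, $N_{\mathfrak p}$ has a basis $H_{\mathfrak p}'$ whose $J$-block is $\mathbf I_\gamma$, and whose maximal minors are $(b_{J'}/b_J)_{J'}$. Since $H$ and $H'_{\mathfrak p}$ are bases of the same free module $N_{\mathfrak p}$, they differ by an element of $\GL_\gamma(\R_{\mathfrak p})$. Hence the $J$-minor of $H$ is a unit in $\R_{\mathfrak p}$, so the ideal of maximal minors of $H$ lies in no prime ideal. Therefore it equals $\R$, and $H$ is ZLP. Finally, writing each row of $F$ in the basis $H$ produces $G\in\M_{l\times\gamma}(\R)$ with $F=GH$.

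The main obstacle is Step 2: checking carefully that the coordinates produced by Cramer's rule are exactly signed ratios of maximal minors, so that they lie in $\R_{\mathfrak p}$. The decisive input is the passage from $\J_\gamma(F)=\R$ to the unit-ideal property of the primitive Plücker vector $(b_J)$ in Step 1. The invocation of Quillen--Suslin in Step 3 is then standard.
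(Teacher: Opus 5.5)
Your proof is correct. There is no argument in the paper to compare it with step by step: the paper states this lemma without proof, importing it from Wang and Feng (2004) with only the remark that it builds on the Quillen--Suslin theorem, and then uses it as a black box in Theorem~\ref{425-Theorem-3} and in the appendix. Your proposal is therefore a self-contained replacement for that citation.

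Your argument has four moves. You saturate the row module to $N=W\cap\R^{1\times m}$. You use $\J_\gamma(F)=\R$ once, to get a primitive Pl\"{u}cker vector $(b_J)$ with $\langle b_J\rangle_{\R}=\R$. You show that $N_{\mathfrak p}$ is free with a basis whose $J$-block is $\mathbf{I}_\gamma$ whenever $b_J\notin\mathfrak p$. You finish with Quillen--Suslin.

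This buys several things the citation hides. Quillen--Suslin is visibly the only deep input. The rank-deficient case costs nothing extra, because you work with the $L$-span $W$ from the start. The hypothesis is used only through $\langle b_J\rangle_{\R}=\R$. Since $\J_\gamma(F)=\langle c_I/d_\gamma(F)\rangle_{\R}\cdot\langle b_J\rangle_{\R}$, this is weaker than $\J_\gamma(F)=\R$. Your argument thus gives a ZLP right factor under that weaker condition, which is also necessary.

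Two points are worth tightening if you write it up.

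\begin{enumerate}
\item The Quillen--Suslin statement recorded in the paper (Theorem~\ref{QS-theorem}) is the matrix form, which amounts to ``stably free implies free.'' To pass from ``$N$ projective'' to ``$N$ free'' you need the module form: every finitely generated projective $\K[x_1,\ldots,x_n]$-module is free. Equivalently, use the matrix form together with Serre's result that such projectives are stably free. Cite that explicitly.
\item A few routine facts are asserted rather than checked; none of them is a gap.
\begin{itemize}
\item $c_I\in\R$: write $c_I=a/d$ in lowest terms; then $d\mid ab_J$ for all $J$, so $d\mid\gcd_J(b_J)=1$.
\item $\gcd_{I,J}(c_Ib_J)=\gcd_I(c_I)\cdot\gcd_J(b_J)$ holds in a UFD.
\item $W\cap\R_{\mathfrak p}^{1\times m}=N_{\mathfrak p}$: clear denominators by an element outside $\mathfrak p$.
\end{itemize}
\end{enumerate}
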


 \subsection{Existing Equivalence Results}

 With the above tools in hand, we now recall several existing equivalence results for polynomial matrices. All of these results will serve as key building blocks for the proofs of our main theorems in the next section.

 The following two lemmas were first established by \cite{Lu2025JSSC} for polynomial matrices over $\K[x_1,x_2]$. Since their proofs rely only on basic polynomial ideal theory and the Laplace expansion theorem for determinants, the arguments carry over verbatim to the more general setting of the multivariate polynomial ring $\R$. For the sake of brevity, we omit the detailed proofs here and refer the reader to Lemmas 3.6 and 3.7 in \citep{Lu2025JSSC}, where one only needs to replace $\K[x_1,x_2]$ with $\R$ to adapt the arguments to our context.

 \begin{lemma}\label{422-lemma-1}
  Let $A\in \M_{l\times l}(\R)$, and let $h_1,\ldots,h_k,h\in \R$ satisfy $h_1\mid \cdots \mid h_k \mid h$, where $k$ is an integer with $1\leq k \leq l-1$. If there exists $B\in \M_{(l-k)\times (l-k)}(\R)$ such that
  \[ A = \diag(h_1,\ldots,h_k,\underbrace{h,\ldots,h}_{l-k}) \cdot \begin{pmatrix}
      \mathbf{I}_k &  \\  & B \end{pmatrix},\]
  then
  \[ d_i(B) = \frac{d_{k+i}(A)}{h_1\cdots h_k h^i} ~ \text{and} ~ \J_i(B) = \J_{k+i}(A),  ~ \text{where} ~ i=1,\ldots,l-k.\]
 \end{lemma}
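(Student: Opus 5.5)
The plan is to compute the minors of $A$ directly by Cauchy--Binet (Proposition \ref{binet-cauchy}), using the factorization $A = D\cdot \diag(\mathbf{I}_k, B)$ with $D=\diag(h_1,\ldots,h_k,h,\ldots,h)$. Since $D$ is diagonal, the $r\times r$ minor of $A$ with row set $I$ and column set $J$ equals $\prod_{s\in I} D_{ss}$ times the corresponding minor of $E=\diag(\mathbf{I}_k,B)$. Write $I = I_1\cup I_2$ with $I_1\subseteq\{1,\ldots,k\}$ and $I_2\subseteq\{k+1,\ldots,l\}$, and similarly $J=J_1\cup J_2$. Because $E$ is block diagonal with an identity block, the minor $E(I,J)$ vanishes unless $I_1=J_1$ and $|I_2|=|J_2|$. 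In that case it equals $\pm$ the minor $B(I_2-k,J_2-k)$. Consequently every nonzero $(k+i)\times(k+i)$ minor of $A$ has the form
\[ \pm\Bigl(\prod_{s\in I_1}h_s\Bigr)\, h^{\,k+i-|I_1|}\, \beta, \]
where $\beta$ is a $(k+i-|I_1|)$-minor of $B$.

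Next I isolate the minors with $I_1=J_1=\{1,\ldots,k\}$. These are exactly $h_1\cdots h_k h^i$ times the $i\times i$ minors of $B$. The claim is that every other $(k+i)$-minor of $A$ lies in the ideal generated by these minors, and is in fact divisible by $h_1\cdots h_k h^i\, d_i(B)$ with reduced quotient in $\J_i(B)$.

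Take a minor with $|I_1|=k-j$ for some $j\ge1$. Its $B$-part is a $(i+j)$-minor of $B$, assuming $i+j\le l-k$; otherwise the minor is zero. Laplace expansion along $j$ rows shows that any $(i+j)$-minor of $B$ lies in the ideal $\I_i(B)$, and more precisely in $d_i(B)\J_i(B)$. The scalar prefactor is $\prod_{s\in I_1}h_s\cdot h^{i+j}$. Since each $h_s\mid h$, replacing the $j$ missing factors $h_s$ ($s\notin I_1$) by $h$ shows that this prefactor is a multiple of $h_1\cdots h_k h^i$. Hence every $(k+i)$-minor of $A$ lies in $h_1\cdots h_kh^i\, d_i(B)\,\J_i(B)$. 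The set of generators includes $h_1\cdots h_k h^i$ times all $i$-minors of $B$, so
\[ \I_{k+i}(A)=h_1\cdots h_k h^i\,\I_i(B). \]
Taking gcds gives $d_{k+i}(A)=h_1\cdots h_kh^i d_i(B)$, which is the first claim.

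For the reduced minors, I divide the generators by $d_{k+i}(A)$. The generators coming from $I_1=\{1,\ldots,k\}$ give exactly the $i$-reduced minors of $B$, up to sign. Every other generator is a multiple of some element of $\J_i(B)$ by the Laplace argument, so the two ideals coincide: $\J_{k+i}(A)=\J_i(B)$.

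The main obstacle is the Laplace step: showing that an $(i+j)$-minor of $B$ divided by $d_i(B)$ lies in $\J_i(B)$. Expanding along $j$ rows writes it as an $\R$-combination of $i$-minors, and dividing by $d_i(B)$ turns each into $b^{(i)}$ times a cofactor, which settles it. The remaining care is in bookkeeping the prefactor divisibility using $h_s\mid h$. The degenerate case where $B$ is not of full rank is handled by treating zero minors trivially. One has $d_i(B)=0$ exactly when $d_{k+i}(A)=0$, and the statement is only meaningful up to the rank.
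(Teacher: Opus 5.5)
Your proof is correct. You establish $\I_{k+i}(A)=h_1\cdots h_k h^i\,\I_i(B)$ from the diagonal row scaling, the block structure of $\diag(\mathbf{I}_k,B)$, and the Laplace expansion of larger minors of $B$ together with $h_s\mid h$, and both claims follow; this is essentially the paper's intended route, since it omits the proof and cites Lemma 3.6 of \citep{Lu2025JSSC} as resting only on the Laplace expansion and basic ideal theory.
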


 \begin{lemma}\label{414-lem-1}
  Let $A\in \M_{l\times l}(\R)$ and $U\in \GL_l(\R_r)$, where $r$ is an integer with $2\leq r \leq n$. Let $h_1,\ldots,h_l\in \R$ satisfy $h_1\mid \cdots \mid h_l$. If there exist integers $t_1,\ldots,t_l$ with $0\leq t_1\leq \cdots\leq t_l$ such that
  \[ A = \diag(h_1,\ldots,h_l) \cdot U \cdot \diag(\varphi_r^{t_1},\ldots,\varphi_r^{t_l}),\]
  then the Smith normal form of $A$ is $S_A = \diag(h_1\varphi_r^{t_1},\ldots,h_l\varphi_r^{t_l})$.
 \end{lemma}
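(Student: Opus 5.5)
The plan is to compute the determinantal divisors $d_k(A)$ for $k=1,\ldots,l$ and show that
\[ d_k(A) = h_1\cdots h_k\,\varphi_r^{t_1+\cdots+t_k}. \]
Then the invariant factors are $d_k(A)/d_{k-1}(A)=h_k\varphi_r^{t_k}$, which is exactly the claimed Smith normal form. Since $U$ is unimodular, $\det(A)=c\cdot h_1\cdots h_l\,\varphi_r^{t_1+\cdots+t_l}$ with $c\in\K\setminus\{0\}$, so $A$ has full rank and all $l$ invariant factors are defined.

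I will factor $A=BC$ with $B=\diag(h_1,\ldots,h_l)\,U$ and $C=\diag(\varphi_r^{t_1},\ldots,\varphi_r^{t_l})$, and apply part 1 of Lemma \ref{Lu-2026-lemma-LAA} with trivial equivalence $A\sim_{\R}BC$. This gives $d_k(A)=d_k(B)\,d_k(C)$.
\begin{itemize}
\item Since $B\sim_{\R}\diag(h_1,\ldots,h_l)$ and $h_1\mid\cdots\mid h_l$, Proposition \ref{lemma-reduced-2} gives $d_k(B)=h_1\cdots h_k$. Indeed, the nonzero $k\times k$ minors of a diagonal matrix are the products $h_{i_1}\cdots h_{i_k}$, and each is divisible by $h_1\cdots h_k$.
\item Because $t_1\le\cdots\le t_l$, the same argument gives $d_k(C)=\varphi_r^{t_1+\cdots+t_k}$.
\end{itemize}

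The hypothesis of Lemma \ref{Lu-2026-lemma-LAA} is $\gcd(\det B,\det C)=1$, i.e.\ $\varphi_r\nmid h_l$. The lemma as displayed does not state this. It does not follow from the hypotheses, and the conclusion can fail without it. For example, take $l=2$, $U=\begin{pmatrix}0&1\\1&0\end{pmatrix}$, $t=(0,1)$ and $h_2=h_1\varphi_r$. Then $A=\begin{pmatrix}0&h_1\varphi_r\\ h_2&0\end{pmatrix}$ has $d_1(A)=h_1\varphi_r$ rather than $h_1$. So I will carry out the proof under the coprimality $\gcd(h_l,\varphi_r)=1$, which is what holds in the intended applications, since there the $h_i$ come from the other factors of $\det(F)$. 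I will state explicitly that this assumption is needed.

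The step I expect to be the real obstacle is the lower bound on $\gcd$ at the prime $\varphi_r$, if one wants to avoid quoting Lemma \ref{Lu-2026-lemma-LAA}. Here the hypothesis $U\in\GL_l(\R_r)$ enters. By Cauchy--Binet, a $k\times k$ minor of $A$ with rows $I$ and columns $J$ equals
\[ \Big(\prod_{i\in I}h_i\Big)\det\big(U(I,J)\big)\prod_{j\in J}\varphi_r^{t_j}, \]
so $h_1\cdots h_k\varphi_r^{T_k}$ divides every minor, where $T_k=t_1+\cdots+t_k$.

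For the reverse direction at a prime $p\ne\varphi_r$, I use that the $k\times k$ minors of any $k$ rows of the unimodular $U$ generate $\R$ (Laplace expansion of $\det U$). At $p=\varphi_r$, I use two facts.
\begin{itemize}
\item Every minor of $U$ lies in $\R_r$. Since $\varphi_r$ is monic of degree $1$ in $x_r$, it divides no nonzero element of $\R_r$.
\item To reach the minimal exponent $T_k$, I need a column set $J$ with $\sum_{j\in J}t_j=T_k$ and a row set $I$ with $\det U(I,J)\neq0$. This holds because the first $k$ columns of the invertible $U$ have rank $k$: take $J=\{1,\ldots,k\}$ and choose $I$ so that $\det U(I,J)\ne0$.
\end{itemize}
Combining these with $\varphi_r\nmid h_i$ shows that the $\varphi_r$-adic valuation of $d_k(A)$ is exactly $T_k$. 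Together with the primes $p\neq\varphi_r$, this completes $d_k(A)=h_1\cdots h_k\,\varphi_r^{T_k}$.
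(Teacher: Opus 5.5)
Your counterexample is valid. Take $h_1=1$, $h_2=\varphi_r$, $t=(0,1)$ and $U=\begin{pmatrix}0&1\\1&0\end{pmatrix}$. Then $A=\begin{pmatrix}0&\varphi_r\\ \varphi_r&0\end{pmatrix}$, whose Smith normal form is $\diag(\varphi_r,\varphi_r)$, not $\diag(1,\varphi_r^2)$. So the lemma as printed is false, and the extra hypothesis $\gcd(h_l,\varphi_r)=1$ is genuinely needed. Under that hypothesis both of your arguments are complete and correct.

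The paper gives no proof of its own, only a pointer to Lemma~3.7 of \cite{Lu2025JSSC} (Laplace expansion plus basic ideal theory). Your direct valuation argument is a self-contained version of that. Your route through part~1 of Lemma~\ref{Lu-2026-lemma-LAA} is shorter but rests on a cited black box.

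Neither route actually needs $U\in\GL_l(\R_r)$. The Lemma~\ref{Lu-2026-lemma-LAA} route never uses it. In the direct route you can treat $\varphi_r$ like every other prime: the $k\times k$ minors of the first $k$ columns of $U$ generate $\R$, so one of them is not divisible by $\varphi_r$.

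One correction to your side remark: coprimality does \emph{not} hold in the paper's own uses of this lemma. In Lemma~\ref{425-lemma-2}, and in the Claim inside Theorem~\ref{425-Theorem-2}, the lemma is invoked with diagonal entries $g_i\varphi_2^{s_i}$ (respectively $h_i\varphi_r^{s_i}$) carrying different powers of $\varphi_r$. Your example with $g_1=g_2=1$, $k=1$, $s_1=0$, $s=1$ is exactly such an instance. That $A$ also satisfies $\J_1(A)=\J_2(A)=\R$, so it refutes Lemma~\ref{425-lemma-2} and the Claim as stated, not only the present lemma.

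Only the uniform-exponent applications, at the first step of the proofs of Theorems~\ref{425-Theorem-1} and~\ref{425-Theorem-2}, reduce to your coprime version, by pulling out the common scalar power of $\varphi_r$. Elsewhere the values of $d_i(A)$ would have to come from the surrounding argument, for instance via $d_i(A)\mid d_i(F)$. Lemmas~\ref{424-lemma-1} and~\ref{502-lemma-1} handle this by making it an explicit hypothesis.
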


 Next, we define a homomorphism modulo an irreducible polynomial in $\K[x_1]$, which will be used in the following lemma. Let $p\in \K[x_1]$ be irreducible. Then $\K[x_1]/\langle p \rangle$ is a field. Define the homomorphism
  \[\begin{array}{cccc}
    \phi_p:  & \R &  \longrightarrow & (\K[x_1]/\langle p \rangle)[x_2,\ldots,x_n] \\
           & \sum c_{i_2\cdots i_n}(x_1)\cdot x_2^{i_2}\cdots x_n^{i_n} & \longrightarrow & \sum \overline{c_{i_2\cdots i_n}(x_1)} \cdot x_2^{i_2}\cdots x_n^{i_n}, \\
   \end{array}\]
 where $c_{i_2\cdots i_n}(x_1)\in \K[x_1]$ and $\overline{c_{i_2\cdots i_n}(x_1)}\in \K[x_1]/\langle p \rangle$. This extends entry-wise to a homomorphism $\phi_p: \M_{l\times m}(\R) \rightarrow \M_{l\times m}((\K[x_1]/\langle p \rangle)[x_2,\ldots,x_n])$.

 \begin{lemma}\label{lemma-25JSSC-1}
  Let $F\in \M_{l\times m}(\R)$, and let $p\in \K[x_1]$ be an irreducible factor of $\det(F)$. Suppose there exists a $k\times m$ submatrix $F_1$ of $F$ such that $\rank(\phi_p(F_1))=k$, where $k$ is an integer with $1\leq k \leq l-1$. If for any row vector $\vec{u}\in F \setminus F_1$, the matrix $F_2^{(\vec{u})} = \begin{pmatrix} F_1 \\ \vec{u}\end{pmatrix} \in \M_{(k+1)\times m}(\R)$ satisfies $\rank(\phi_p(F_2^{(\vec{u})})) = k$, then $\rank(\phi_p(F))=k$.
 \end{lemma}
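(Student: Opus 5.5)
The plan is to pass to a field and argue with row spaces. Put $\mathbb{L} = \K[x_1]/\langle p \rangle$. Since $p$ is irreducible, $\mathbb{L}$ is a field, so $\mathbb{L}[x_2,\ldots,x_n]$ is an integral domain with fraction field $\mathbb{E} = \mathbb{L}(x_2,\ldots,x_n)$. The map $\phi_p$ is a ring homomorphism applied entry-wise, so $\phi_p(F)$ is a matrix over this domain. Its rank there (the size of its largest nonvanishing minor) equals its rank as a matrix over $\mathbb{E}$, which is the dimension of its row space in $\mathbb{E}^{m}$. The same holds for $\phi_p(F_1)$ and for each $\phi_p(F_2^{(\vec{u})})$. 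This reduces the lemma to linear algebra over $\mathbb{E}$.

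\emph{Lower bound.} $\phi_p(F_1)$ is a submatrix of $\phi_p(F)$, made of some of its rows. Any nonzero $k\times k$ minor of $\phi_p(F_1)$ is therefore a nonzero $k\times k$ minor of $\phi_p(F)$, which gives $\rank(\phi_p(F))\geq k$.

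\emph{Upper bound.} Since $\rank(\phi_p(F_1)) = k$ and $F_1$ has exactly $k$ rows, the rows $\phi_p(\vec{v}_1),\ldots,\phi_p(\vec{v}_k)$ of $\phi_p(F_1)$ are linearly independent over $\mathbb{E}$. Now take any row $\vec{u}$ of $F$ that is not in $F_1$. By hypothesis the $k+1$ rows $\phi_p(\vec{v}_1),\ldots,\phi_p(\vec{v}_k),\phi_p(\vec{u})$ span a space of dimension only $k$. The first $k$ of them are independent, so $\phi_p(\vec{u})$ lies in the $\mathbb{E}$-span of $\phi_p(\vec{v}_1),\ldots,\phi_p(\vec{v}_k)$. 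Every row of $\phi_p(F)$ is either a row of $\phi_p(F_1)$ or such a $\phi_p(\vec{u})$. Hence the whole row space of $\phi_p(F)$ equals the $k$-dimensional span of the rows of $\phi_p(F_1)$, and $\rank(\phi_p(F))\leq k$.

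Combining the two bounds gives $\rank(\phi_p(F)) = k$.

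The only step needing care is identifying determinantal rank over the domain $\mathbb{L}[x_2,\ldots,x_n]$ with vector-space rank over $\mathbb{E}$. This is standard, because minors are unchanged by the embedding into the fraction field. The hypothesis that $p$ divides $\det(F)$ is not needed for this argument; it only explains why the conclusion is meaningful, namely that the rank drops modulo $p$.
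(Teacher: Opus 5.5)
Your proof is correct: over the fraction field $\mathbb{E}$ of $(\K[x_1]/\langle p\rangle)[x_2,\ldots,x_n]$, the $k$ rows of $\phi_p(F_1)$ are independent and each remaining row of $\phi_p(F)$ lies in their span, so the rank is exactly $k$. You are also right that $p\mid\det(F)$ plays no role. The paper omits its proof and defers to Lemma 3.3 of Lu et al.\ (2025); this row-space/linear-dependence argument is essentially the intended one.
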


 The proof is analogous to that of Lemma 3.3 in \citep{Lu2025JSSC} and is omitted here.

 The following three lemmas, due to \cite{Lu2024arxiv}, use localization techniques to characterize such equivalences, first for square matrices over $\R$ with determinants in $\K[x_1]$, then extending to non-square, rank-deficient matrices via the Quillen-Suslin theorem.

\begin{lemma}[\cite{Lu2024arxiv}]\label{Zheng-lemma}
 Let $F\in \M_{l\times l}(\R)$ with $p\mid \det(F)$, where $p\in \K[x_1]$ is irreducible. Suppose there exists an integer $k ~ (1\leq k \leq l-1)$ satisfying $\J_k(F)=\R$ and $\rank(\phi_p(F)) = k$. Then $F$ can be factorized as
 \[ F = U \cdot  \diag(\underbrace{1,\ldots,1}_{k},p,\ldots,p) \cdot G,\]
 where $U\in \GL_l(\R)$ and $G\in \M_{l\times l}(\R)$.
\end{lemma}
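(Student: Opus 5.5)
The plan is to construct the factorization through the reduction $\phi_p$, working over the field $\mathbb{L}=\K[x_1]/\langle p\rangle$ and using the rank condition there. Then lift back to $\R$ by a localization argument driven by $\J_k(F)=\R$.

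\emph{Step 1 (kernel over the residue ring).} Set $\bar F=\phi_p(F)\in \M_{l\times l}(\mathbb{L}[x_2,\ldots,x_n])$, which has rank $k$. Consider the left kernel (syzygy) module $N=\{\vec v\in \mathbb{L}[x_2,\ldots,x_n]^{1\times l} \mid \vec v\bar F=0\}$. I would show that $N$ is free of rank $l-k$, generated by the rows of an $(l-k)\times l$ matrix $\bar W$ that is ZLP over $\mathbb{L}[x_2,\ldots,x_n]$. The key input is $\J_k(F)=\R$. Since $p\mid\det(F)$, we have $p\mid d_l(F)$. The $k\times k$ minors of $F$ equal $d_k(F)\,b^{(k)}_j$ with the $b^{(k)}_j$ generating $\R$, and I expect $p\nmid d_k(F)$: if $p\mid d_k(F)$, all $k$-minors would vanish mod $p$, contradicting $\rank\bar F=k$. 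Hence $\phi_p(\J_k(F))$ is the unit ideal and the images of the $k\times k$ minors generate the unit ideal of $\mathbb{L}[x_2,\ldots,x_n]$. So $\bar F$ has unit-ideal $k$-minors and rank $k$. By the Lin--Bose lemma (Lemma \ref{Lin-Bose-theorem}) over $\mathbb{L}[x_2,\ldots,x_n]$, whose proof is field-independent, $\bar F=\bar G\bar H$ with $\bar H$ ZLP and $\bar G$ ZRP. By Theorem \ref{QS-theorem}, $\bar G$ completes to a unimodular matrix, which gives $\bar U_0\in\GL_l(\mathbb{L}[x_2,\ldots,x_n])$ with $\bar U_0\bar F=\begin{pmatrix}\bar H\\ 0\end{pmatrix}$.

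\emph{Step 2 (lifting the unimodular matrix).} I need some $U'\in\GL_l(\R)$ with $\phi_p(U')$ row-equivalent to $\bar U_0$ in the last $l-k$ rows, i.e.\ the last $l-k$ rows of $\phi_p(U'F)$ vanish. This is the main obstacle, because $\phi_p$ need not induce a surjection $\GL_l(\R)\to\GL_l(\mathbb{L}[x_2,\ldots,x_n])$. First I would try to avoid lifting $\bar U_0$ itself. The last $l-k$ rows of $\bar U_0$ form a ZLP matrix $\bar W$. Lift $\bar W$ entrywise to $W\in\M_{(l-k)\times l}(\R)$; its maximal minors generate an ideal $I$ with $I+\langle p\rangle=\R$. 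Then apply a Quillen--Suslin/Suslin-type result on $\R$: unimodular rows over the quotient ring $\R/\langle p\rangle$ of dimension $n-1$ lift to completable rows. Equivalently, use $\GL$-surjectivity for elementary matrices: by Suslin's stability theorem, $\bar U_0$ can be chosen in $E_l(\mathbb{L}[x_2,\ldots,x_n])$ when $l\ge3$, and elementary matrices lift trivially. The case $l=2$ (so $k=1$) I would treat directly. There $\bar F$ has rank $1$ with unit-ideal entries, so $\bar F=\bar g\bar h$ with $\bar g$ a unimodular column of length $2$, which is completable by an elementary-liftable matrix, since $\begin{pmatrix}a&b\end{pmatrix}$ unimodular gives $\begin{pmatrix}a&b\\-c&d\end{pmatrix}$, which lifts with determinant $\equiv1$ mod $p$ and can be corrected using $\det\in 1+p\R$. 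This last correction is delicate, and I would fall back on the localization argument of \cite{Lu2024arxiv} if needed.

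\emph{Step 3 (conclusion).} With $U'\in\GL_l(\R)$ such that the last $l-k$ rows of $U'F$ lie in $p\R^{1\times l}$, write $U'F=\diag(1,\ldots,1,p,\ldots,p)\,G$ with $G\in\M_{l\times l}(\R)$, and set $U=U'^{-1}$. Then $F=U\cdot\diag(\underbrace{1,\ldots,1}_{k},p,\ldots,p)\cdot G$, which is the claimed factorization.
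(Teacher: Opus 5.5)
Your Step 2 has a genuine gap when $l=2$ (so $k=1$), $n\ge 3$ and $\deg p\ge 2$. In that case you must produce a unimodular row of length $2$ over $\R$ whose image under $\phi_p$ spans the left kernel of $\phi_p(F)$. In other words, you must lift a unimodular row $(\bar a,\bar b)$ over $\R/\langle p\rangle=\mathbb{L}[x_2,\ldots,x_n]$, up to a factor in $\mathbb{L}^*$, to a unimodular row over $\R$.

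Your claim that such a row is completed by an elementary, hence liftable, matrix is false once $n\ge 3$. By Cohn's theorem, $\begin{pmatrix}1+x_2x_3 & x_2^2\\ -x_3^2 & 1-x_2x_3\end{pmatrix}\notin \mathrm{E}_2(\mathbb{L}[x_2,x_3])$, so the unimodular row $(1+x_2x_3,\,x_2^2)$ is not in the $\mathrm{E}_2$-orbit of $(1,0)$.

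Lifting an arbitrary completion entrywise gives determinant $1+pe$, which is not a unit of $\R$ unless $e=0$. If you try to remove $e$ by adding multiples of $p$ to the entries, you get an equation with a $p^2$-term, and the obvious Hensel-type iteration only converges in the $p$-adic completion. Neither stable range (here $\mathrm{sr}(\R)=n+1$) nor Suslin's $K_1$-stability (which needs size at least $3$) gives surjectivity of unimodular rows of length $2$ modulo $p$. Deferring to \cite{Lu2024arxiv} is not a proof.

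This is not a side case. Take $a,b\in\R$ lifting $\bar a,\bar b$ and set $F=\begin{pmatrix}-b & p\\ a & 0\end{pmatrix}$. All hypotheses hold: $\J_1(F)=\langle a,b,p\rangle=\R$, $\rank(\phi_p(F))=1$, and $p\mid\det(F)$. The conclusion then says exactly that the second row of $U^{-1}$ is a unimodular row over $\R$ reducing to $\mu(\bar a,\bar b)$ with $\mu\in\mathbb{L}^*$. So the $l=2$ case of the lemma is equivalent to this lifting property, and that is the idea your proposal is missing.

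The paper itself gives no proof of this lemma; it imports it from \cite{Lu2024arxiv}, where a localization argument is used. Your route does cover three situations:
\begin{itemize}
\item $l\ge 3$, via Suslin's theorem (with the repair below);
\item $n=2$, since $\mathbb{L}[x_2]$ is Euclidean and so $\mathrm{SL}_2=\mathrm{E}_2$;
\item $\deg p=1$, since then $\mathbb{L}=\K$ and $\K[x_2,\ldots,x_n]\subseteq\R$, so everything lifts verbatim.
\end{itemize}

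Two smaller errors also need repair.

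\emph{Step 1.} The images of the $k\times k$ minors of $F$ generate $\langle\phi_p(d_k(F))\rangle$, not the unit ideal. Only the reduced minors of $\phi_p(F)$ generate the unit ideal; this does suffice for the Lin--Bose lemma, since $p\nmid d_k(F)$. However, the left factor $\bar G$ need not be ZRP. Apply the Lin--Bose lemma once more to write $\bar G=\bar G_1\bar G_2$ with $\bar G_1$ ZRP, and complete $\bar G_1$, as in the proof of Theorem~\ref{425-Theorem-3}.

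\emph{Step 2 for $l\ge 3$.} $\det(\bar U_0)\in\mathbb{L}^*$ need not come from $\K^*$. First scale the first row of $\bar U_0$ by $\det(\bar U_0)^{-1}$; this leaves the last $l-k$ rows untouched because $k\ge 1$. Then invoke $\mathrm{SL}_l(\mathbb{L}[x_2,\ldots,x_n])=\mathrm{E}_l(\mathbb{L}[x_2,\ldots,x_n])$ and lift the elementary factors.
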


 \begin{lemma}[\cite{Lu2024arxiv}]\label{424-lemma-1}
  Let $A\in \M_{l\times l}(\R)$ with $p\mid \det(A)$, and $U\in GL_l(\R)$, where $p\in \K[x_1]$ is irreducible. Suppose there exists an integer $k$ with $1\leq k \leq l-1$ such that
  \[A = \diag(p^{s_1},\ldots,p^{s_k},p^{s},\ldots,p^{s}) \cdot U \cdot \diag(\underbrace{1,\ldots,1}_{k},p,\ldots,p),\]
  where $s_1,\ldots,s_k,s$ are integers satisfying $0\leq s_1 \leq \cdots \leq s_k \leq s$. If $d_i(A) = p^{s_1+\cdots+s_i}$ and $\J_i(A) = \R$ for $i=1,\ldots,k$, then
  \[A\sim_{\R} \diag(p^{s_1},\ldots,p^{s_k},p^{s+1},\ldots,p^{s+1}).\]
 \end{lemma}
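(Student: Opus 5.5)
The plan is an induction on $k$. The idea is to transform $U$ by unimodular operations that are compatible with the two diagonal factors until the first row and first column split off. Put $D=\diag(p^{s_1},\ldots,p^{s_k},p^{s},\ldots,p^{s})$ and $E=\diag(1,\ldots,1,p,\ldots,p)$, so that $A=DUE$.

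The replacement $U\mapsto W U V$ is admissible when $W\in\GL_l(\R)$ satisfies $DW=W'D$ for some $W'\in\GL_l(\R)$, and $V\in\GL_l(\R)$ satisfies $VE=EV'$ for some $V'\in\GL_l(\R)$. In that case $A\sim_{\R} D(WUV)E$. The admissible $W$ include:
\begin{itemize}
\item adding an $\R$-multiple of row $i$ of $U$ to row $j$ whenever the exponent of $p$ in the $j$-th entry of $D$ is at most the exponent in the $i$-th entry;
\item every unimodular transformation within a block of rows having equal exponents.
\end{itemize}
The admissible $V$ include every column operation that adds multiples of one of the first $k$ columns to any column. By Proposition~\ref{lemma-reduced-2}, none of these replacements changes the hypotheses $d_i(A)$ and $\J_i(A)$.

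The main step is to produce $U_{11}=1$ after such admissible moves. First I will read off information from the minors of $A$. Consider the $1\times 1$ minors. An entry of $A$ in row $i$ and column $j$ equals $p^{e_i}u_{ij}$ if $j\le k$, and $p^{e_i+1}u_{ij}$ otherwise, where $e_i$ is the exponent of $p$ in the $i$-th entry of $D$. From $d_1(A)=p^{s_1}$ and $\J_1(A)=\R$ we get that $\langle p\rangle+\langle u_{ij} : e_i=s_1,\ j\le k\rangle=\R$. More generally, for $i\le k$ the hypotheses $d_i(A)=p^{s_1+\cdots+s_i}$ and $\J_i(A)=\R$ give the following. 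The $i\times i$ minors of $U$ with column set contained in $\{1,\ldots,k\}$, and with row set achieving the minimal exponent sum $s_1+\cdots+s_i$, together with $p$ generate $\R$. Every other reduced minor has a positive power of $p$ as a factor, so only these survive modulo $p$.

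At the same time, $p\nmid u$ for the relevant combinations gives, in the localization at $\langle p\rangle$, a unit minor. I would combine this local statement with the global comaximality via a Quillen--Suslin / patching argument, in the same spirit as Lemma~\ref{Zheng-lemma}. The target is a unimodular vector: a vector built from the rows with exponent $s_1$ and the first $k$ columns of $U$ whose $\R$-combination equals $1$ modulo suitable multiples of $p$ coming from the other columns. Using column operations from the first $k$ columns, together with row operations inside the minimal-exponent block and downward into higher-exponent rows, I then expect to obtain a $1$ in position $(1,1)$ of the transformed $U$. After that, clearing row $1$ and column $1$ of $U$ uses only admissible moves:
\begin{itemize}
\item subtracting multiples of row $1$ from rows with exponent at least $s_1$, which is every row;
\item subtracting multiples of column $1$ from the other columns.
\end{itemize}
This gives $A\sim_{\R}\diag(p^{s_1})\oplus A'$, where $A'=D'U'E'$ has the same shape with $k-1$ in place of $k$.

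Next, I apply Lemma~\ref{422-lemma-1}, or a direct Cauchy--Binet count, to transfer the hypotheses to $A'$. This gives $d_i(A')=p^{s_2+\cdots+s_{i+1}}$ and $\J_i(A')=\R$ for $i\le k-1$. The base case $k=0$ is $A=p^{s}U\,p\mathbf I$, which is equivalent to $p^{s+1}\mathbf I$. Induction then yields the claimed form.

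The hard part is producing the entry $1$ at position $(1,1)$. Modulo $p$ the argument is linear algebra over the field $\K[x_1]/\langle p\rangle$, where $\phi_p$ and Lemma~\ref{lemma-25JSSC-1} give the rank information. The difficulty is lifting this to an honest unimodular transformation over $\R$ that respects the admissibility constraints. If the direct lifting fails, I would instead first apply Lemma~\ref{Zheng-lemma} to the block of rows with exponent $s_1$. That would split off $\diag(1,\ldots)$ factors there, and the remaining constraints would then be handled by the downward row operations.
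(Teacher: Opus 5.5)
\textbf{There is a genuine gap at the one step that carries the lemma.} The paper quotes this lemma from \cite{Lu2024arxiv} without proof, so there is nothing to match line by line; judged on its own, your proposal is otherwise routine bookkeeping. The missing step is producing a $1$ at position $(1,1)$ by admissible moves, and for it you offer only an expectation and an unspecified ``Quillen--Suslin/patching'' argument.

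The difficulty is real, because the hypotheses only give information modulo $p$. For instance, when $s_k<s$, the condition $\J_k(A)=\R$ yields only $\langle \det U_{kk},p\rangle_{\R}=\R$. Take $p=x_1^2+1$, $k=1$, $s_1=0<s=1$ and $U=\begin{pmatrix}x_1&1\\-1&0\end{pmatrix}$: all hypotheses hold, yet $\det U_{kk}=x_1$ is not even congruent to a constant modulo $p$.

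In the paper's $\varphi_r$-analogues (Lemma~\ref{425-lemma-2} and the Claim in the appendix) this issue does not arise. There $U$ has entries in $\R_r$, and evaluating at $x_r=f_r$ upgrades ``unit modulo $\varphi_r$'' to ``unit''. Here no such upgrade exists. Lifting a mod-$p$ normalization into the groups $\{W: DWD^{-1}\in\GL_l(\R)\}$ and $\{V: E^{-1}VE\in\GL_l(\R)\}$ meets obstructions: units of $\K[x_1]/\langle p\rangle$ such as $\bar x_1$ need not come from $\K^{*}$, and $\GL_m(\R)\to\GL_m(\R/\langle p\rangle)$ is in general not surjective, already on determinants.

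Your fallback does not fix this as stated. Lemma~\ref{Zheng-lemma} returns $U'\diag(1,\ldots,1,p,\ldots,p)G$ with no control on $G$, so the $D(\cdot)E$ shape your induction needs is lost.

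\textbf{How to close the gap.} Give up admissibility in the splitting step and restore the shape afterwards; that is where Lemma~\ref{Zheng-lemma} actually belongs.
\begin{enumerate}
\item If $s_1=s$, then $A=p^sUE\sim_{\R}p^sE$ and you are done.
\item Otherwise let rows $1,\ldots,m$ (with $m\le k$) be those of exponent $s_1$. Then $\J_m(A)=\R$ says that $p$, together with the $m\times m$ minors of $U$ taken from its first $m$ rows and columns in $\{1,\ldots,k\}$, generates $\R$. Combined with the unimodularity of the first $m$ rows of $U$, this makes the first $m$ rows of $UE$ a ZLP matrix over $\R$, with no patching needed.
\item By Theorem~\ref{QS-theorem} there is $V$ with $UEV=\begin{pmatrix}\mathbf{I}_m&0\\ C&B\end{pmatrix}$. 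Hence $A\sim_{\R}\diag(p^{s_1}\mathbf{I}_m,\,D'B)$ with $D'=\diag(p^{s_{m+1}},\ldots,p^{s_k},p^s,\ldots,p^s)$, and also $\diag(\mathbf{I}_m,B)\sim_{\R}E$.
\item Lemma~\ref{422-lemma-1} then gives $d_{k-m}(B)=1$, $\J_{k-m}(B)=\R$ and $d_{k-m+1}(B)=p$, so $\rank(\phi_p(B))=k-m$. Lemma~\ref{Zheng-lemma} yields $B=U''\diag(\mathbf{I}_{k-m},p\mathbf{I}_{l-k})G$, and comparing determinants shows $G$ is unimodular. If $m=k$, simply $B=pB'$ with $B'$ unimodular.
\item Thus $D'B\sim_{\R}D'U''\diag(\mathbf{I}_{k-m},p\mathbf{I}_{l-k})$ has the original shape with $k-m<k$. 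Lemma~\ref{422-lemma-1} transfers the hypotheses, and your induction goes through.
\end{enumerate}

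\textbf{A smaller slip.} Your first bullet on admissible row operations has the direction reversed. Adding a multiple $c$ of row $i$ of $U$ to row $j$ is admissible when $e_j\ge e_i$, since the corresponding entry of $DWD^{-1}$ is $c\,p^{e_j-e_i}$. That is the direction you actually use later.
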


 \begin{lemma}[\cite{Lu2024arxiv}]\label{415-lemma-1}
  Let $F\in \M_{l\times m}(\R)$ with $d_{\gamma}(F)\in \K[x_1]$, where $\gamma$ is the rank of $F$ with $1\leq \gamma \leq \min\{l,m\}$. Then $F$ is equivalent over $\R$ to its Smith normal form if and only if $\J_i(F) = \R$ for $i=1,\ldots,\gamma$.
 \end{lemma}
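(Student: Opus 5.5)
The statement to be proved is Lemma \ref{415-lemma-1}: for $F$ of rank $\gamma$ with $d_\gamma(F)\in\K[x_1]$, $F$ is equivalent to $S_F$ if and only if $\J_i(F)=\R$ for all $i\le\gamma$.

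\textbf{Necessity.} If $F\sim_{\R} S_F$, Proposition \ref{lemma-reduced-2} gives $\J_i(F)=\J_i(S_F)$. The only nonzero $i\times i$ minors of $S_F$ are products $h_{j_1}\cdots h_{j_i}$. By the divisibility $h_1\mid\cdots\mid h_\gamma$, these are all multiples of $h_1\cdots h_i=d_i(F)$. Hence $1$ occurs among the reduced minors, so $\J_i(F)=\R$.

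\textbf{Sufficiency, square full-rank case.} I would first treat $l=m=\gamma$, so $\det(F)\in\K[x_1]$, by induction on the number of irreducible factors of $\det(F)$ in $\K[x_1]$, counted with multiplicity.
\begin{itemize}
\item Pick an irreducible $p\mid\det(F)$ and let $k=\rank(\phi_p(F))$; note $k<l$.
\item If $k\ge1$, the hypothesis $\J_k(F)=\R$ lets Lemma \ref{Zheng-lemma} factor $F=U\,\diag(1,\ldots,1,p,\ldots,p)\,G$ with $\det(G)\in\K[x_1]$ of smaller degree.
\item If $k=0$, then $p$ divides every entry of $F$, so $F=pG$.
\end{itemize}
The difficulty is that the hypothesis $\J_i(G)=\R$ is not automatically inherited by $G$. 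So rather than factoring once, I would decompose $\det(F)=\prod_j p_j^{e_j}$ into its coprime prime-power parts. Concretely:
\begin{itemize}
\item Peel off the $p_j$-primary parts, writing $F\sim BC$ with $\det(B)$ a power of one prime and $\gcd(\det B,\det C)=1$. This can be done by factoring $F$ locally, e.g.\ by repeatedly applying Lemma \ref{Zheng-lemma} for one prime only.
\item Lemma \ref{Lu-2026-lemma-LAA} then transfers $\J_i=\R$ to both factors and splits the determinantal divisors multiplicatively.
\item By coprimality, it then suffices to treat each prime-power part and recombine. Recombining diagonal forms with coprime determinants via the Chinese remainder/Bezout trick gives a diagonal matrix with the correct invariant factors.
\end{itemize}

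\textbf{Sufficiency, prime-power case $\det(F)=p^e$.} I would argue by induction on $e$, building the Smith form prime power by prime power with Lemma \ref{424-lemma-1}.
\begin{itemize}
\item Take $k=\rank\phi_p(F)$ and apply Lemma \ref{Zheng-lemma} to get $F=U\,\diag(1,\ldots,1,p,\ldots,p)\,G$.
\item By induction, applied to a matrix of smaller $p$-exponent (using a transposed version of the factorization), $G$ is equivalent to a diagonal matrix of $p$-powers.
\item Rearranging puts $F$ in the shape $\diag(p^{s_1},\ldots,p^{s},\ldots)\cdot U'\cdot\diag(1,\ldots,1,p,\ldots,p)$.
\item Lemma \ref{424-lemma-1} then finishes the step. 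Its hypotheses $d_i=p^{s_1+\cdots+s_i}$ and $\J_i=\R$ come from the assumption on $F$.
\end{itemize}

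\textbf{Sufficiency, general case.} For non-square or rank-deficient $F$, since $\J_\gamma(F)=\R$, Lemma \ref{Lin-Bose-theorem} gives $F=GH$ with $H$ ZLP. By Theorem \ref{QS-theorem}, $HV=(\mathbf{I}_\gamma,0)$, so $F\sim(G,0)$. Then:
\begin{itemize}
\item Check that $G$ has the same $d_i$ and $\J_i$ as $F$, using Cauchy–Binet.
\item Apply the same argument to $G^{T}$ to reduce to a $\gamma\times\gamma$ matrix.
\item Invoke the square case.
\end{itemize}

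\textbf{Expected obstacle.} The main obstacle is the prime-power induction: ensuring the intermediate matrix has exactly the form, and the divisor and reduced-minor data, required by Lemma \ref{424-lemma-1}. The choice of $k$ through $\rank\phi_p$ and the necessity conditions should control this.
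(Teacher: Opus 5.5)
Your necessity argument is fine, and so is your reduction of the non-square, rank-deficient case to a $\gamma\times\gamma$ matrix (Lin--Bose plus Quillen--Suslin, as in the proof of Theorem~\ref{425-Theorem-3}). The paper does not prove this lemma; it imports it from \cite{Lu2024arxiv}. The relevant in-paper comparison for the square case is therefore the extraction procedure in the proof of Lemma~\ref{423-lemma-1}, and against that your square-case sufficiency has two genuine gaps.

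\textbf{The prime-power step does not fit Lemma~\ref{424-lemma-1}.} That lemma applies only to $\diag(p^{s_1},\ldots,p^{s_k},p^{s},\ldots,p^{s})\cdot U\cdot\diag(1,\ldots,1,p,\ldots,p)$. The last $l-k$ exponents of the left diagonal factor must be \emph{equal}, with the same $k$ as in the right factor. If you apply the induction hypothesis to $G$ wholesale, you get $F\sim_{\R} D\,W\,S_G$ (or, transposed, $S_G W^{\rm T} D$). Here $S_G$ is expected to be $\diag(1,\ldots,1,p^{s_{k+1}-1},\ldots,p^{s_l-1})$, whose last $l-k$ exponents are in general distinct, so no rearrangement gives the required shape. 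Moreover, the induction hypothesis needs $\J_i(G)=\R$ for \emph{all} $i$. That is the inheritance problem you raised yourself and never resolve; the two factors have non-coprime determinants, so Lemma~\ref{Lu-2026-lemma-LAA} is unavailable. The workable route builds the left factor one layer at a time:
\begin{enumerate}
\item write $F=p^{s_1}F_1$ and peel a single $p$ off the last $l-k$ rows with Lemma~\ref{Zheng-lemma};
\item diagonalize the accumulated left factor with Lemma~\ref{424-lemma-1}, getting its $d_i,\J_i$ hypotheses from Cauchy--Binet bookkeeping rather than directly from $F$;
\item use Lemma~\ref{419-lemma-1} to certify that the new right factor again has $\J_k=\R$ and $\rank\phi_p=k$, so Lemma~\ref{Zheng-lemma} can be reapplied.
\end{enumerate}
Only $\J_i$ for $i\le k$ is ever needed on the right factor. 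Your ``peel off the primary part by repeatedly applying Lemma~\ref{Zheng-lemma}'' is justified only through this same procedure.

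\textbf{The recombination across distinct primes is not a B\'ezout trick.} Knowing $B\sim_{\R}D_B$ and $C\sim_{\R}D_C$ only gives $F\sim_{\R}D_B\,W\,D_C$ with $W\in\GL_l(\R)$ arbitrary. Showing $D_BWD_C\sim_{\R}D_BD_C$ is again an instance of the lemma you are proving (univariate determinant, all $\J_i=\R$), so the step is circular. The Chinese remainder theorem does show the two matrices have isomorphic cokernels. But over $\R$, which is not a PID, turning that into an equivalence of $l\times l$ matrices is exactly the hard part. The paper never diagonalizes such mixed products by elementary means:
\begin{itemize}
\item Lemma~\ref{423-lemma-1} stops at $F\sim_{\R} S_{p_1}Q_1\cdots S_{p_r}Q_rF_{N_{ll}}$ without diagonalizing.
\item Lemma~\ref{425-lemma-2}, which combines a $\K[x_1]$-part with a $\varphi_2$-part, relies on $U\in\GL_l(\R_2)$ and the specialization $x_2=f_2(x_1)$, which are unavailable when both parts lie in $\K[x_1]$. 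It also invokes Lemma~\ref{415-lemma-1} itself to handle the univariate factors.
\end{itemize}
The source the paper cites resolves this point with localization, and your plan contains no substitute for it.
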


 Next, we present three related results by \cite{Liu2024} concerning matrices whose determinants are powers of $\varphi_r$ defined earlier. These results use the Quillen–Suslin theorem and the Lin-Bose lemma to characterize equivalence to the Smith normal form for such matrices.

 \begin{lemma}[\cite{Liu2024}]\label{424-lemma-2}
  Let $F\in \M_{l\times l}(\R)$ with $\varphi_r\mid \det(F)$, where $r$ is an integer with $2\leq r \leq n$. Suppose there exists an integer $k ~ (1\leq k \leq l-1)$ satisfying $\J_k(F) = \R$ and $\rank(F(x_1,\ldots,x_{r-1},f_r,x_{r+1},\ldots,x_n))$ $= k$. Then $F$ can be factorized as
  \[ F = V \cdot \diag(\underbrace{1,\ldots,1}_{k}, \varphi_r,\ldots,\varphi_r) \cdot H,\]
  where $V\in \GL_l(\R_r)$ and $H\in \M_{l\times l}(\R)$.
 \end{lemma}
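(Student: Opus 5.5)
Our plan is to mimic the proof of Lemma \ref{Zheng-lemma}, replacing the irreducible $p\in\K[x_1]$ by $\varphi_r$ and working in $\R$ regarded as $\R_r[x_r]$. The key observation is that the substitution $x_r\mapsto f_r(x_1,\ldots,x_{r-1})$ is a surjective ring homomorphism $\psi:\R\to\R_r$ with kernel $\langle\varphi_r\rangle$, and it restricts to the identity on $\R_r$. Set $\bar F=\psi(F)=F(x_1,\ldots,x_{r-1},f_r,x_{r+1},\ldots,x_n)\in\M_{l\times l}(\R_r)$. By hypothesis $\rank(\bar F)=k$.

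The first step is to show that $\bar F$ has a $k\times k$ reduced-minor type unit condition. Since $\J_k(F)=\R$, write $1=\sum c_j b_j^{(k)}$ and apply $\psi$, which gives $1=\sum\psi(c_j)\psi(b_j^{(k)})$. Moreover, the $k\times k$ minors of $\bar F$ are $\psi(d_k(F))\psi(b^{(k)}_j)$. Because $\rank\bar F=k$, some minor is nonzero, so $\psi(d_k(F))\neq 0$. Hence the ideal of the $k\times k$ minors of $\bar F$ equals $\psi(d_k(F))\cdot\R_r$, i.e.\ the reduced $k$-minors of $\bar F$ generate $\R_r$.

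Next we apply Lemma \ref{Lin-Bose-theorem} over the polynomial ring $\R_r$ to $\bar F^{T}$, or equivalently to the column structure. This gives $\bar F=GH$ with $G\in\M_{l\times k}(\R_r)$ and $H\in\M_{k\times l}(\R_r)$, where we choose $G$ to be ZRP. By Theorem \ref{QS-theorem} over $\R_r$, we complete $G$ to a unimodular matrix $V\in\GL_l(\R_r)$ whose first $k$ columns are $G$. Then $V^{-1}\bar F=\begin{pmatrix} H\\ 0\end{pmatrix}$, with the last $l-k$ rows zero.

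Now lift back to $\R$. Because $V^{-1}$ has entries in $\R_r$, we have $\psi(V^{-1}F)=V^{-1}\bar F$, so the last $l-k$ rows of $V^{-1}F$ lie in $\ker\psi=\langle\varphi_r\rangle$. They are therefore divisible by $\varphi_r$ entrywise, and we may write $V^{-1}F=\diag(1,\ldots,1,\varphi_r,\ldots,\varphi_r)\,H'$ for some $H'\in\M_{l\times l}(\R)$. This yields the required factorization with $V\in\GL_l(\R_r)$.

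The main obstacle is the first step, namely ensuring that the Lin-Bose lemma applies over $\R_r$. This requires verifying that the reduced $k$-minors of $\bar F$ generate the unit ideal, which in turn needs $\psi(d_k(F))\ne0$ and the exact identification of the gcd. Both follow from the rank hypothesis together with the image of the Bezout identity, as outlined above. Orienting the Lin-Bose factorization so that the left factor is ZRP is routine by transposition.
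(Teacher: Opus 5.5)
Your proof is correct and complete. The substitution $\psi$ is a retraction of $\R=\R_r[x_r]$ onto its subring $\R_r$ with kernel $\langle\varphi_r\rangle$. The image of a B\'ezout relation for $\J_k(F)$, together with $\rank(\bar F)=k$, gives $d_k(\bar F)=\psi(d_k(F))$ and $\J_k(\bar F)=\R_r$. Lemma~\ref{Lin-Bose-theorem} and Theorem~\ref{QS-theorem} over $\R_r$ then produce $V\in\GL_l(\R_r)$ with the last $l-k$ rows of $V^{-1}\bar F$ equal to zero. Since $\psi$ fixes $V^{-1}$, those rows of $V^{-1}F$ are divisible by $\varphi_r$.

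The paper does not reprove this lemma; it quotes it from \cite{Liu2024}, and your argument is the standard one for this setting. Two small remarks. First, the hypothesis $\varphi_r\mid\det(F)$ is redundant, since $\psi(\det(F))=\det(\bar F)=0$. Second, your route is not really a transcription of Lemma~\ref{Zheng-lemma}. For $p\in\K[x_1]$ the residue ring $(\K[x_1]/\langle p\rangle)[x_2,\ldots,x_n]$ does not sit inside $\R$, which is why that case relies on localization and only yields $U\in\GL_l(\R)$. Here it is precisely the section $\R_r\hookrightarrow\R$ that makes your lifting step immediate and puts $V$ in $\GL_l(\R_r)$.
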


 \begin{lemma}[\cite{Liu2024}]\label{502-lemma-1}
  Let $A\in \M_{l\times l}(\R)$ with $\varphi_r\mid \det(A)$, and $U\in GL_l(\R_r)$, where $r$ is an integer with $2\leq r \leq n$. Suppose there exists an integer $k$ with $1\leq k \leq l-1$ such that
  \[A = \diag(\varphi_r^{s_1},\ldots,\varphi_r^{s_k},
  \varphi_r^{s},\ldots,\varphi_r^{s}) \cdot U \cdot \diag(\underbrace{1,\ldots,1}_{k},\varphi_r,\ldots,\varphi_r),\]
  where $s_1,\ldots,s_k,s$ are integers satisfying $0\leq s_1 \leq \cdots \leq s_k \leq s$. If $d_i(A) = \varphi_r^{s_1+\cdots+s_i}$ and $\J_i(A) = \R$ for $i=1,\ldots,k$, then
  \[A\sim_{\R} \diag(\varphi_r^{s_1},\ldots,\varphi_r^{s_k},\varphi_r^{s+1},
  \ldots,\varphi_r^{s+1}).\]
 \end{lemma}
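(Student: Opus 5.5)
The plan is to split $U$ into blocks and convert the reduced-minor hypothesis into a statement over the smaller ring $\R_r$, where the Quillen--Suslin theorem applies. Write $D_1=\diag(\varphi_r^{s_1},\ldots,\varphi_r^{s_k})$ and
\[U=\begin{pmatrix} U_{11} & U_{12}\\ U_{21} & U_{22}\end{pmatrix},\]
with $U_{11}\in\M_{k\times k}(\R_r)$. Then
\[A=\begin{pmatrix} D_1U_{11} & \varphi_r D_1U_{12}\\ \varphi_r^{s}U_{21} & \varphi_r^{s+1}U_{22}\end{pmatrix}.\]

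\emph{Step 1: locate the reduced minors not divisible by $\varphi_r$.} By Laplace expansion, every $k\times k$ minor of $A$ with row set $I$ and column set $J$ has the form
\[\varphi_r^{\,e(I)+|J\cap\{k+1,\ldots,l\}|}\cdot \det U(I,J),\qquad e(I)=\sum_{i\in I}s_i,\]
where we set $s_i=s$ for $i>k$. The minimal exponent is $s_1+\cdots+s_k=\deg_{\varphi_r}d_k(A)$. It is attained only for $J=\{1,\ldots,k\}$ and for row sets $I$ with $e(I)=s_1+\cdots+s_k$, which include $I=\{1,\ldots,k\}$. All other reduced minors are divisible by $\varphi_r$ (recall $\det U(I,J)\in\R_r$ is coprime to $\varphi_r$ unless it is zero). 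Since $\J_k(A)=\R$, the polynomials $\det U(I,\{1,\ldots,k\})$ with $I$ minimal have no common zero on $\mathds{V}(\varphi_r)$. These polynomials lie in $\R_r$, and $\mathds{V}(\varphi_r)$ projects onto all of $\overline{\K}^{\,n-1}$ when $x_r$ is eliminated. Hence they generate the unit ideal of $\R_r$.

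\emph{Step 2: normalize $U$ over $\R_r$.} Consider the submatrix $U'$ of the first $k$ columns of $U$ restricted to the rows occurring in minimal $I$. By Step 1 it is ZRP over $\R_r$. The Quillen--Suslin theorem (Theorem \ref{QS-theorem}), applied over $\R_r$, then gives a unimodular matrix $P$ over $\R_r$, acting only on those rows, with $PU'=\binom{\mathbf{I}_k}{0}$. Rows sharing the same exponent $s_i$ are interchangeable under $D=\diag(D_1,\varphi_r^s\mathbf I_{l-k})$: if $s_i=s_j$, then $P$ commutes with $D$ on that block. If some rows of $U'$ carry a larger exponent, the operation must instead be made admissible. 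The natural way is to replace the corresponding left multiplier by $D P D^{-1}$, which is polynomial exactly when the lower-exponent rows are only \emph{added} to higher-exponent rows. I would therefore first perform block elimination inside $\R_r$ ordered by increasing $s_i$, using the hypotheses $d_i(A)=\varphi_r^{s_1+\cdots+s_i}$ and $\J_i(A)=\R$ for every $i\le k$. These give unit-ideal conditions at each level, exactly as in Step 1, and allow induction on $i$. The outcome is $A\sim_{\R} D\cdot \widetilde U\cdot\diag(\mathbf I_k,\varphi_r\mathbf I_{l-k})$ with
\[\widetilde U=\begin{pmatrix}\mathbf I_k & \ast\\ 0 & \widetilde U_{22}\end{pmatrix},\]
where $\widetilde U_{22}$ is unimodular over $\R_r$.

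\emph{Step 3: clear the off-diagonal block.} The upper-right block of $D\widetilde U\diag(\mathbf I_k,\varphi_r\mathbf I_{l-k})$ is $\varphi_r D_1\ast$. Since $\varphi_r D_1$ is divisible column-wise by $D_1$, right column operations using the first $k$ columns $D_1$ eliminate it. What remains is $\diag(D_1,\ \varphi_r^{s+1}\widetilde U_{22})$, and right multiplication by $\widetilde U_{22}^{-1}$ yields $\diag(\varphi_r^{s_1},\ldots,\varphi_r^{s_k},\varphi_r^{s+1},\ldots,\varphi_r^{s+1})$.

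The main obstacle is Step 2. Row operations must respect the graded structure imposed by the unequal exponents $s_1\le\cdots\le s_k\le s$, so Quillen--Suslin cannot be applied naively to all $k$ columns at once. I would handle this by inducting over the distinct values among the $s_i$, peeling off one block of equal exponents at a time. At each stage Lemma \ref{422-lemma-1} transfers the hypotheses $d_i$, $\J_i$ to the remaining lower block, and the projection argument of Step 1 turns $\J_i(A)=\R$ into a unit-ideal condition over $\R_r$.
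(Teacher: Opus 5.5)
Your Steps 1 and 3 are sound. The gap is Step 2, which you rightly flag as the crux. The mechanism you sketch there cannot work, even with induction over the distinct $s_i$. That mechanism is a left multiplier $P$ from Quillen--Suslin applied to the ZRP matrix $U'$, kept admissible by allowing only additions of lower-exponent rows to higher-exponent rows. The problem is that row operations alone cannot produce the block $\mathbf I_k$ in $\widetilde U$.

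Here is a concrete failure. Take $l=3$, $k=2$, $(s_1,s_2,s)=(0,1,2)$, and let $U$ be the permutation matrix exchanging the first two coordinates. Then $A=\begin{pmatrix}0&1&0\\ \varphi_r&0&0\\ 0&0&\varphi_r^3\end{pmatrix}$ satisfies all hypotheses. Suppose $QA=D\widetilde U E$ with $E=\diag(1,1,\varphi_r)$ and the first two columns of $\widetilde U$ equal to $\binom{\mathbf I_2}{0}$. Then $P'=D^{-1}QD=\widetilde U U^{-1}$ satisfies $P'e_2=e_1$, so $Q_{12}=\varphi_r^{s_1-s_2}\cdot 1=\varphi_r^{-1}$. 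Thus no left multiplier over $\R$ achieves your Step 2 normal form, even at the first block of your peeling induction, whereas a single column swap does.

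There is a second problem. When $s_k=s$, the rows occurring in minimal $I$ are all $l$ rows, so your $U'$ is just the first $k$ columns of $U$. That matrix is ZRP for free because $U$ is unimodular. So the information from Step 1 is discarded exactly where it is needed.

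The missing idea is to normalize with \emph{column} operations. Right multiplication by $\diag(V,\mathbf I_{l-k})$ with $V\in\GL_k(\R_r)$ commutes with $E=\diag(\mathbf I_k,\varphi_r\mathbf I_{l-k})$, so it is admissible whatever the $s_i$ are.

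\begin{itemize}
\item Let $\tau$ be the number of indices $i\le k$ with $s_i<s$.
\item By $d_\tau(A)=\varphi_r^{s_1+\cdots+s_\tau}$ and $\J_\tau(A)=\R$, the $\tau\times k$ block $U_{\tau k}$ (first $\tau$ rows, first $k$ columns of $U$) is ZLP over $\R_r$. This follows by the same projection argument as your Step 1. Alternatively, expand your Step 1 minors by the generalized Laplace rule along rows $1,\ldots,\tau$.
\item Quillen--Suslin then gives $V$ with $U_{\tau k}V=(\mathbf I_\tau,0)$.
\item After that you only need additions of rows $1,\ldots,\tau$ to rows of exponent $s\ge s_i$, which are admissible, plus column operations. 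These reach $\diag\bigl(\varphi_r^{s_1},\ldots,\varphi_r^{s_\tau},\,\varphi_r^{s}W\diag(\mathbf I_{k-\tau},\varphi_r\mathbf I_{l-k})\bigr)$ with $W$ unimodular over $\R_r$.
\item All remaining rows carry the same exponent $s$, so left multiplication by $W^{-1}$ finishes.
\end{itemize}

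The paper cites this lemma rather than proving it. The argument above is its Second and Third Cases of Lemma \ref{425-lemma-2} with $g_1=\cdots=g_l=1$. The First Case, $\tau=0$, is trivial here since $A=\varphi_r^sUE$. No induction over the distinct values of the $s_i$ is needed; your peeling induction would also go through with this column device, but it is superfluous.
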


 \begin{lemma}[\cite{Liu2024}]\label{425-lemma-1}
  Let $F\in \M_{l\times m}(\R)$ with $d_{\gamma}(F) = \varphi_r^t$, where $\gamma$ is the rank of $F$ with $1\leq \gamma \leq \min\{l,m\}$, $r,t$ are integers satisfying $2\leq r \leq n$ and $1\leq t$. Then $F$ is equivalent over $\R$ to its Smith normal form if and only if $\J_i(F) = \R$ for $i=1,\ldots,\gamma$.
 \end{lemma}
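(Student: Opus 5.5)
The forward direction follows from Proposition \ref{lemma-reduced-2}. Suppose $F\sim_{\R} S_F=\diag(h_1,\ldots,h_\gamma)\oplus 0$. Then $\J_i(F)=\J_i(S_F)$. The leading $i\times i$ principal minor of $S_F$ is $h_1\cdots h_i=d_i(F)$, so the corresponding reduced minor equals $1$. Hence $\J_i(F)=\R$ for $i=1,\ldots,\gamma$.

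For the converse I would first reduce to the square nonsingular case.

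\emph{Reduction.} Since $\J_\gamma(F)=\R$, Lemma \ref{Lin-Bose-theorem} gives $F=GH$ with $H$ ZLP. Theorem \ref{QS-theorem} turns $H$ into $(\mathbf{I}_\gamma,0)$ by a unimodular column operation. Applying the same argument to the transpose of $G$ reduces $F$ to $\left(\begin{smallmatrix} F_0 & 0\\ 0&0\end{smallmatrix}\right)$ with $F_0\in\M_{\gamma\times\gamma}(\R)$. By Cauchy--Binet, $F_0$ has the same $d_i$ and $\J_i$ as $F$, and $\det(F_0)=c\,\varphi_r^t$ with $c\in\K^*$. So it suffices to treat $F\in\M_{l\times l}(\R)$ with $\det F=c\varphi_r^t$.

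\emph{Induction on $t$.} The case $t=0$ is trivial: $F$ is unimodular. For $t\ge 1$, let $k$ be the rank of $F(x_1,\ldots,x_{r-1},f_r,x_{r+1},\ldots,x_n)$; then $k<l$.
\begin{itemize}
\item If $k=0$, then $F=\varphi_r F'$. Here $d_i(F')=d_i(F)/\varphi_r^i$ and $\J_i(F')=\J_i(F)$, and the inductive hypothesis applies to $F'$.
\item If $k\ge1$, I would apply Lemma \ref{424-lemma-2} to $F^{T}$, since $\J_k(F^T)=\J_k(F)=\R$. This yields
\[F=G\cdot\diag(\underbrace{1,\ldots,1}_{k},\varphi_r,\ldots,\varphi_r)\cdot W,\qquad W\in\GL_l(\R_r),\]
with $\det G=c'\varphi_r^{t-(l-k)}$ and $t-(l-k)<t$.
\end{itemize}
The aim is to apply the inductive hypothesis to $G$, obtaining $G=P\,\diag(\varphi_r^{s_1},\ldots,\varphi_r^{s_l})\,Q$. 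Then one can show $s_{k+1}=\cdots=s_l$, by comparing $d_i(F)$ for $i>k$, or alternatively regroup the product so that Lemma \ref{422-lemma-1} strips off the first $k$ diagonal entries. At that point $F$ is equivalent to a matrix of the shape in Lemma \ref{502-lemma-1}, namely $\diag(\varphi_r^{s_1},\ldots,\varphi_r^{s_k},\varphi_r^{s},\ldots,\varphi_r^{s})\cdot U\cdot\diag(1,\ldots,1,\varphi_r,\ldots,\varphi_r)$. Lemma \ref{502-lemma-1} then gives the diagonal form, with the needed hypotheses $d_i=\varphi_r^{s_1+\cdots+s_i}$ and $\J_i=\R$ for $i\le k$ inherited from $F$.

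\emph{Main obstacle.} Two points are delicate.
\begin{enumerate}
\item One must verify that $\J_i(G)=\R$ for all $i$, so that induction applies. Lemma \ref{Lu-2026-lemma-LAA} is unavailable because the two determinants share the factor $\varphi_r$. I would instead expand minors of $FW^{-1}=G\cdot\diag(1,\ldots,\varphi_r,\ldots)$ via Cauchy--Binet. Each $i\times i$ minor of $FW^{-1}$ is a minor of $G$ times $\varphi_r^{j}$, where $j$ is the number of chosen columns beyond $k$. I would then argue pointwise on $\mathds{V}$ that a common zero of the reduced minors of $G$ would give a common zero of those of $F$, contradicting $\J_i(F)=\R$.
\item One must ensure the middle unimodular factor $U$ produced by induction lies in $\GL_l(\R_r)$, as Lemma \ref{502-lemma-1} requires. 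For this I would strengthen the inductive statement to track that the transforming matrices can be chosen in $\GL_l(\R_r)$, as Lemma \ref{424-lemma-2} provides. Lemma \ref{414-lem-1} would serve as the check that the resulting Smith form is correct.
\end{enumerate}
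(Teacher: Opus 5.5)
The paper does not prove this lemma; it is quoted from Liu et al.\ (2024). Its analogues in the paper (Lemma \ref{423-lemma-1}, Theorem \ref{425-Theorem-1}) peel the Smith form off from the \emph{left}, one level at a time. Lemma \ref{420-coro-1} supplies $d_i,\J_i$ for $i\le k$ and the rank condition, Lemma \ref{424-lemma-2} inserts $\diag(1,\ldots,1,\varphi_r,\ldots,\varphi_r)$, and Lemma \ref{502-lemma-1} is then applied to a left factor that is the partially built Smith form, so its last $l-k$ exponents are equal by construction. Your reduction to the square case and your forward direction are fine. Your right-peeling induction (Lemma \ref{424-lemma-2} applied to $F^{\rm T}$), however, has a genuine gap at the reassembly step.

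\textbf{The trailing exponents of $G$ need not be equal.} The claim $s_{k+1}=\cdots=s_l$ for the Smith exponents of $G$ is false. In fact $G$ is forced to have Smith form $\diag(1,\ldots,1,\varphi_r^{e_{k+1}-1},\ldots,\varphi_r^{e_l-1})$, where the $e_j$ are the exponents of $F$. For example, $F=\diag(1,\varphi_r,\varphi_r^3)$ gives $k=1$ and $G\sim\diag(1,1,\varphi_r^2)$. Then $F\sim\diag(1,1,\varphi_r^2)\,Q\,\diag(1,\varphi_r,\varphi_r)$, which matches the shape of Lemma \ref{502-lemma-1} neither with $k=1$ nor with $k=2$.

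\textbf{The $\GL_l(\R_r)$ strengthening is false.} Take $r=2$, $f_2=0$ and $F=\begin{pmatrix}1&x_2^2\\0&x_2^3\end{pmatrix}$. This admits the legitimate output $G=\begin{pmatrix}1&x_2\\0&x_2^2\end{pmatrix}$, $W=\mathbf{I}_2$. Yet $G=P\,\diag(1,x_2^2)\,Q$ with $Q\in\GL_2(\K[x_1])$ is impossible: the second column $(q_1,q_2)^{\rm T}$ of $Q^{-1}$ would need $x_2^2\mid q_1+x_2q_2$, which forces $q_1=q_2=0$. Lemma \ref{502-lemma-1} itself only returns matrices in $\GL_l(\R)$, so such tracking could not propagate in any case.

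\textbf{The pointwise argument for $\J_i(G)=\R$ is incomplete.} Off $\mathds{V}(\varphi_r)$ it works. On $\mathds{V}(\varphi_r)$, write $d_i(G)=\varphi_r^{a_i}$ and $d_i(F)=\varphi_r^{b_i}$. The reduced $i$-minors of $F$ are then $\varphi_r^{\,j(J)-(b_i-a_i)}$ times those of $G$, and these exponents may be negative unless you already know $d_i(F)=d_i(G)\,\varphi_r^{i-k}$.

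\textbf{A splitting repairs all three points.} Since $d_k(F)=1$ and $\J_k(F)=\R$, the first $k$ columns $G_1$ of $G$ form a ZRP matrix. On $\mathds{V}(\varphi_r)$, only the $k$-minors of $G_1$ survive among the generators of $\I_k(FW^{-1})=\R$; off $\mathds{V}(\varphi_r)$, $G$ is invertible. Theorem \ref{QS-theorem} then gives $V\in\GL_l(\R)$ with $VG=\begin{pmatrix}\mathbf{I}_k&X\\0&Y\end{pmatrix}$, so $VGD=\begin{pmatrix}\mathbf{I}_k&\varphi_rX\\0&\varphi_rY\end{pmatrix}$. A column operation clears $\varphi_r X$, hence $F\sim_{\R}\diag(\mathbf{I}_k,\varphi_rY)$. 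By Lemma \ref{422-lemma-1}, $\J_i(Y)=\J_{k+i}(F)=\R$ and $d_i(Y)=d_{k+i}(F)/\varphi_r^{i}$. Since $\det Y=c\,\varphi_r^{t-(l-k)}$, your induction applies to $Y$, and $\diag(\mathbf{I}_k,\varphi_rS_Y)$ is the Smith form of $F$. This route avoids Lemma \ref{502-lemma-1} and the $\GL_l(\R_r)$ issue entirely.
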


 Lemmas \ref{Zheng-lemma}--\ref{415-lemma-1} and \ref{424-lemma-2}--\ref{425-lemma-1} establish parallel results for two distinct classes of polynomial matrices over $\R$: the former concerns matrices with determinants involving univariate polynomials, while the latter addresses matrices whose determinants are powers of $\varphi_r ~ (2\leq r \leq n)$. In the next section, we combine these two settings and establish a general equivalence theorem for square matrices over $\R$ whose determinants contain both types of factors.

\section{Main Results}\label{sec_main-results}

 With all the preliminary tools and known results in place, we now turn to the main contributions of this paper. Our core task is to address Problem \ref{main-problem} and establish our main equivalence theorem (Theorem \ref{425-Theorem-2}). To this end, we begin by introducing a new definition that will play a key role in our subsequent arguments.

 \begin{definition}
  Let $F\in \M_{l\times l}(\R)$ be of full rank, and let $p_1^{r_1}p_2^{r_2}\cdots p_t^{r_t}$ be the irreducible factorization of $\det(F)$, where $p_1,p_2,\ldots,p_t\in \R$ are distinct and irreducible, $t$ and $r_1,r_2,\ldots,r_t$ are positive integers. Let the Smith normal form of $F$ be
  \begin{equation*}
   \left(
         \begin{array}{cccc}
           p_1^{s_{11}}p_2^{s_{12}}\cdots p_t^{s_{1t}} & &  & \\  &  p_1^{s_{21}}p_2^{s_{22}}\cdots p_t^{s_{2t}} &  & \\
           &  & \ddots & \\
            &  &  & p_1^{s_{l1}}p_2^{s_{l2}}\cdots p_t^{s_{lt}} \\
         \end{array}
       \right),
  \end{equation*}
  where for $j=1,\ldots,t$, $s_{1j},s_{2j},\ldots,s_{lj}$ are integers satisfying $0\leq s_{1j} \leq s_{2j} \leq \cdots \leq s_{lj}$ and $r_j = \sum_{i=1}^{l}s_{ij}$. Then
  \[\diag(p_j^{s_{1j}},p_j^{s_{2j}},\ldots,p_j^{s_{lj}})\]
  is called the Smith normal form of $F$ w.r.t. $p_j$, where $j=1,\ldots,t$.
 \end{definition}

 With this definition in hand, we now state the following lemma.

 \begin{lemma}\label{419-lemma-1}
  Let $F\in \M_{l\times l}(\R)$, and let $p\in \K[x_1]$ be an irreducible factor of $\det(F)$. Let $\diag(p^{s_1},\ldots,p^{s_l})$ be the Smith normal form of $F$ w.r.t. $p$, where $s_1,\ldots,s_l$ are integers satisfying $0\leq s_1\leq \cdots \leq s_l$. Suppose there exist integers $k ~ (1\leq k \leq l-1)$ and $s ~ (s_k \leq s < s_{k+1})$ such that
  \[ F \sim_{\R} \diag(p^{s_1},\ldots,p^{s_k},p^s,\ldots,p^s) \cdot G,\]
  where $G\in \M_{l\times l}(\R)$. If $\J_i(F) = \R$ for $i=1,\ldots,l$, then
  \begin{enumerate}
    \item $d_i(G) = \frac{d_i(F)}{p^{s_1+\cdots+s_i}}$ and $\J_i(G) = \R$ for $i=1,\ldots,k$;

    \item $\rank(\phi_p(G)) = k$.
  \end{enumerate}
 \end{lemma}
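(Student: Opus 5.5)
We first normalize. Since $F\sim_{\R} DG$ with $D=\diag(p^{s_1},\ldots,p^{s_k},p^s,\ldots,p^s)$, we have $F=U D G V$ with $U,V$ unimodular. Replacing $G$ by $GV$ changes neither $d_i(G)$, nor $\J_i(G)$, nor $\rank(\phi_p(G))$, because $\phi_p(V)$ is invertible. So by Proposition \ref{lemma-reduced-2} we may assume $F=DG$. Write $\epsilon_j$ for the exponent of $p$ in the $j$-th diagonal entry of $D$. Since $D$ is diagonal, every $i\times i$ minor of $F$ with row set $I$ and column set $J$ satisfies
\[
\det F(I,J)=p^{e_I}\det G(I,J), \qquad e_I=\sum_{j\in I}\epsilon_j .
\]
By the definition of the Smith normal form of $F$ w.r.t.\ $p$, the exact power of $p$ dividing $d_i(F)$ is $p^{s_1+\cdots+s_i}$.

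\emph{Part 1.} Fix $i\le k$. Every multiset of $i$ entries taken from $\{s_1,\ldots,s_k,s,\ldots,s\}$ has sum at least $s_1+\cdots+s_i$, because $s\ge s_k$. Hence $e_I\ge s_1+\cdots+s_i$ for all $|I|=i$, and $p^{s_1+\cdots+s_i}d_i(G)$ divides $d_i(F)$. Conversely, write $d_i(F)=p^{s_1+\cdots+s_i}e$. The exact $p$-power of $d_i(F)$ is $p^{s_1+\cdots+s_i}$, so $p\nmid e$. For every minor we have $e\mid p^{e_I-(s_1+\cdots+s_i)}\det G(I,J)$, and since $p$ is prime and $p\nmid e$, this gives $e\mid \det G(I,J)$. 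Thus $e\mid d_i(G)$, and so $d_i(G)=e=d_i(F)/p^{s_1+\cdots+s_i}$. Now each reduced minor of $F$ equals $p^{e_I-(s_1+\cdots+s_i)}$ times the corresponding reduced minor of $G$. Therefore $\R=\J_i(F)\subseteq \J_i(G)$, which gives $\J_i(G)=\R$.

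\emph{Part 2, lower bound.} Since $p\nmid d_k(G)$, if every $k\times k$ minor of $\phi_p(G)$ vanished, then $p$ would divide every reduced $k$-minor of $G$. That would give $\J_k(G)\subseteq\langle p\rangle\neq\R$, contradicting Part 1. So $\rank(\phi_p(G))\ge k$.

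\emph{Part 2, upper bound (main obstacle).} The difficulty is choosing the right $k$ rows, because $e_I$ can be large for arbitrary row sets. The plan is as follows.

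First, since $\J_k(F)=\R$ and the exact $p$-power of $d_k(F)$ is $p^{s_1+\cdots+s_k}$, some reduced $k$-minor of $F$ is not divisible by $p$. So some $k$-minor of $F$ has $p$-adic valuation exactly $s_1+\cdots+s_k$. By the formula above, this forces a row set $I_0$ with $e_{I_0}=s_1+\cdots+s_k$ such that $\phi_p$ of the corresponding $k$ rows of $G$, call this submatrix $G_1$, has rank $k$.

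Next, suppose $\rank(\phi_p(G))>k$. Lemma \ref{lemma-25JSSC-1}, applied to $G$ (which is justified because $p\mid\det(G)$, as $s<s_{k+1}$ forces $p\mid\det F/\det D$), gives a row $u\notin I_0$ such that $\rank(\phi_p(G_1;u))=k+1$. Hence some $(k+1)$-minor of $G$ on the rows $I_0\cup\{u\}$ is not divisible by $p$.

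Finally, the corresponding minor of $F$ has $p$-valuation $e_{I_0}+\epsilon_u\le s_1+\cdots+s_k+s$. This is strictly less than $s_1+\cdots+s_{k+1}$, because $s<s_{k+1}$. That contradicts the fact that $p^{s_1+\cdots+s_{k+1}}$ divides $d_{k+1}(F)$. Hence $\rank(\phi_p(G))=k$.
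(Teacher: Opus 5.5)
Your proof is correct. Part 1 is essentially the paper's argument: the exact $p$-power $p^{s_1+\cdots+s_i}$ of $d_i(F)$, the bound $e_I\geq s_1+\cdots+s_i$ for $|I|=i\leq k$, cancelling the cofactor of $d_i(F)$ that is coprime to $p$, and comparing reduced minors to get $\J_i(F)\subseteq\J_i(G)$.

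For Part 2 you use the same three ingredients as the paper, organized differently:
\begin{enumerate}
  \item a block of $k$ rows of $G$ whose image under $\phi_p$ has rank $k$, located via the exact $p$-power of $d_k(F)$;
  \item the divisibility $p^{s_1+\cdots+s_{k+1}}\mid d_{k+1}(F)$ together with $s<s_{k+1}$, which shows no further row raises the rank;
  \item Lemma \ref{lemma-25JSSC-1}.
\end{enumerate}
The paper splits into three cases: $s_1=\cdots=s_k=s$; $s_k<s$; and $s_\tau<s_{\tau+1}=\cdots=s_k=s$. In each case it pins down the good block explicitly: any $k$ rows, the first $k$ rows, or the first $\tau$ rows plus suitable others. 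You instead take an arbitrary row set $I_0$ attaining the minimal exponent $e_{I_0}=s_1+\cdots+s_k$ and use only $\epsilon_u\leq s$ for the added row. That covers all three cases at once.

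Your version is shorter. It also checks the hypothesis $p\mid\det(G)$ of Lemma \ref{lemma-25JSSC-1}, which the paper leaves implicit. The paper's case analysis says more about which rows carry the rank, but the statement does not need that.

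Two minor simplifications are available. First, a $k$-minor of $F$ with $p$-adic valuation exactly $s_1+\cdots+s_k$ exists simply because the valuation of a gcd is the minimum of the valuations, so $\J_k(F)=\R$ is not needed at that step. Second, $\rank(\phi_p(G))\geq k$ follows directly from $p\nmid d_k(G)$, without going through $\J_k(G)$.
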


 \begin{proof}
  Let $e_i = \sum_{j=1}^{i}s_{j}$, where $i=1,\ldots,k$. Since $\diag(p^{s_{1}},\ldots,p^{s_{l}})$ is the Smith normal form of $F$ w.r.t. $p$, there exists $q_i\in \R$ such that
  \begin{equation}\label{Lu_equ-1}
   d_i(F) = p^{e_i}q_i ~ \text{and} ~ \gcd(p,q_i) = 1 ~ \text{for} ~ i=1,\ldots,k.
  \end{equation}
  Let $A = \diag(p^{s_{1}},\ldots,p^{s_{k}},p^{s},
  \ldots,p^{s}) \cdot G$. Since $F\sim_{\R} A$, it follows from Proposition \ref{lemma-reduced-2} that
  \begin{equation}\label{Lu_equ-2}
   d_i(A) = d_i(F) \text{ and } \J_i(A) = \J_i(F) \text { for } i=1,\ldots,k.
  \end{equation}
  For any given integer $i_0$ with $1 \leq i_0 \leq k$, assume that $h_{11}^{(i_0)},\ldots,h_{1\eta_{i_0}}^{(i_0)},h_{21}^{(i_0)},\ldots,h_{2\xi_{i_0}}^{(i_0)}\in \R$ are all $i_0\times i_0$ minors of $G$, where $h_{11}^{(i_0)},\ldots,h_{1\eta_{i_0}}^{(i_0)}$ are all $i_0\times i_0$ minors of the submatrix formed by the first $i_0$ rows of $G$. Then,
  \[ p^{e_{i_0}}h_{11}^{(i_0)},\ldots,p^{e_{i_0}}h_{1\eta_{i_0}}^{(i_0)},
  p^{e_{21}^{(i_0)}}h_{21}^{(i_0)},\ldots,p^{e_{2\xi_{i_0}}^{(i_0)}}h_{2\xi_{i_0}}^{(i_0)}\]
  are all $i_0\times i_0$ minors of $A$, where $e_{2j}^{(i_0)} = s_{j_1}+\cdots+s_{j_{i_0}}$, the indices $\{j_1,\ldots,j_{i_0}\}$ is a strictly increasing sequence with $1\leq j_1 < \cdots < j_{i_0} \leq l$, $s_{j_{i_0}} = s$ if $j_{i_0} \geq k+1$, $j=1,\ldots,\xi_{i_0}$. Clearly, $e_{2j}^{(i_0)} \geq e_{i_0}$ for $j=1,\ldots,\xi_{i_0}$. Thus,
  \begin{equation}\label{Lu_equ-3}
   \begin{split}
   d_{i_0}(A) & = \gcd(p^{e_{i_0}}h_{11}^{(i_0)},\ldots,p^{e_{i_0}}h_{1\eta_{i_0}}^{(i_0)},
  p^{e_{21}^{(i_0)}}h_{21}^{(i_0)},\ldots,p^{e^{(i_0)}_{2\xi_{i_0}}}h_{2\xi_{i_0}}^{(i_0)}) \\ & =  p^{e_{i_0}} \cdot \gcd(h_{11}^{(i_0)},\ldots,h_{1\eta_{i_0}}^{(i_0)},
  p^{e^{(i_0)}_{21}-e_{i_0}}h_{21}^{(i_0)},\ldots,p^{e^{(i_0)}_{2\xi_{i_0}}-e_{i_0}}h_{2\xi_{i_0}}^{(i_0)}).
   \end{split}
  \end{equation}
  Combining Equations \eqref{Lu_equ-1}--\eqref{Lu_equ-3}, we have
  \begin{equation*}\label{Lu_equ-4}
   q_{i_0} = \gcd(h_{11}^{(i_0)},\ldots,h_{1\eta_{i_0}}^{(i_0)},
  p^{e^{(i_0)}_{21}-e_{i_0}}h_{21}^{(i_0)},\ldots,p^{e^{(i_0)}_{2\xi_{i_0}}-e_{i_0}}h_{2\xi_{i_0}}^{(i_0)}).
  \end{equation*}
  It follows from $q_{i_0} \mid p^{e^{(i_0)}_{2j}-e_{i_0}}h_{2j}^{(i_0)}$ and $\gcd(p,q_{i_0}) = 1$ that $q_{i_0} \mid h^{(i_0)}_{2j}$ for $j=1,\ldots,\xi_{i_0}$. This implies that
  \begin{equation*}\label{Lu_equ-5}
   q_{i_0} \mid \gcd(h_{11}^{(i_0)},\ldots,h_{1\eta_{i_0}}^{(i_0)},h_{21}^{(i_0)},\ldots,h_{2\xi_{i_0}}^{(i_0)}).
  \end{equation*}
  Since $\gcd(h_{11}^{(i_0)},\ldots,h_{1\eta_{i_0}}^{(i_0)},h_{21}^{(i_0)},\ldots,h_{2\xi_{i_0}}^{(i_0)}) \mid \gcd(h_{11}^{(i_0)},\ldots,h_{1\eta_{i_0}}^{(i_0)},
  p^{e^{(i_0)}_{21}-e_{i_0}}h_{21}^{(i_0)},\ldots,p^{e^{(i_0)}_{2\xi_{i_0}}-e_{i_0}}h_{2\xi_{i_0}}^{(i_0)})$, we get
  \begin{equation*}\label{Lu_equ-5-1}
   q_{i_0} = \gcd(h_{11}^{(i_0)},\ldots,h_{1\eta_{i_0}}^{(i_0)},h_{21}^{(i_0)},\ldots,h_{2\xi_{i_0}}^{(i_0)}) = d_{i_0}(G).
  \end{equation*}
  Then
  \[\frac{h_{11}^{(i_0)}}{q_{i_0}},\ldots,\frac{h_{1\eta_{i_0}}^{(i_0)}}{q_{i_0}},
  p^{e^{(i_0)}_{21}-e_{i_0}}\frac{h_{21}^{(i_0)}}{q_{i_0}},\ldots,
  p^{e^{(i_0)}_{2\xi_{i_0}}-e_{i_0}}\frac{h_{2\xi_{i_0}}^{(i_0)}}{q_{i_0}}\]
  are all $i_0\times i_0$ reduced minors of $A$. It follows from $\J_{i_0}(A) = \R$ that
  \begin{equation*}\label{Lu_equ-5-2}
   \J_{i_0}(A) \subseteq \left\langle \frac{h_{11}^{(i_0)}}{q_{i_0}},\ldots,\frac{h_{1\eta_{i_0}}^{(i_0)}}{q_{i_0}},
  \frac{h_{21}^{(i_0)}}{q_{i_0}},\ldots,\frac{h_{2\xi_{i_0}}^{(i_0)}}{q_{i_0}} \right\rangle_{\R} = \R.
  \end{equation*}
  As $i_0$ is an arbitrarily chosen integer in $\{1,\ldots,k\}$, we conclude that
  \begin{equation*}\label{Lu_equ-5-3}
   d_i(G) = \frac{d_i(F)}{p^{s_1+\cdots+s_i}} ~ \text{and} ~ \J_i(G) = \R ~ \text{for} ~ i=1,\ldots,k.
  \end{equation*}

  In what follows, we divide the proof into three cases to show that $\rank(\phi_p(G)) = k$.

  \textbf{First Case:} $s_1=\cdots=s_k = s$.

  Since $d_k(G) = q_k$ and $\gcd(p,q_k)=1$, we have $\rank(\phi_p(G))\geq k$. For any given two strictly increasing sequences $\{i_1,\ldots,i_{k+1}\}$ with $1\leq i_1 < \cdots < i_{k+1}\leq l$ and $\{j_1,\ldots,j_{k+1}\}$ with $1\leq j_1 < \cdots < j_{k+1}\leq l$, we get
  \[p^{ks+s_{k+1}}\mid \det\left(A\begin{pmatrix} i_1 \cdots i_{k+1} \\ j_1 \cdots j_{k+1}
  \end{pmatrix}\right) = p^{(k+1)s}\cdot \det\left(G\begin{pmatrix} i_1 \cdots i_{k+1} \\ j_1 \cdots j_{k+1} \end{pmatrix}\right).\]
  As $s_{k+1}>s$, we obtain
  \[p\mid\det\left(G\begin{pmatrix} i_1 \cdots i_{k+1} \\ j_1 \cdots j_{k+1} \end{pmatrix}\right).\]
  This implies that $p\mid d_{k+1}(G)$. Thus, $\rank(\phi_p(G))= k$.

  \textbf{Second Case:} $s_k < s$.

  Let $G_k$ be the $k\times l$ submatrix formed by the first $k$ rows of $G$. We assert that $\rank(\phi_p(G_k)) = k$. If otherwise, $p\mid h_{1j}^{(k)}$ for $j=1,\ldots,\eta_k$. Since $s_k < s$, we have $e_{2j}^{(k)}> e_k$ for $j=1,\ldots,\xi_k$. It follows that
  \begin{equation*}\label{Lu_equ-6}
   p\mid \gcd(h_{11}^{(k)},\ldots,h_{1\eta_{k}}^{(k)},
  p^{e^{(k)}_{21}-e_{k}}h_{21}^{(k)},\ldots,p^{e^{(k)}_{2\xi_{k}}-e_{k}}h_{2\xi_{k}}^{(k)}).
  \end{equation*}
  This implies that $p\mid q_k$, which contradicts the fact that $\gcd(p,q_k) = 1$. Thus, $\rank(\phi_p(G_k)) = k$. Let $G_{k,j}$ be the $(k+1)\times l$ submatrix formed by the first $k$ rows and the $j$-th row of $G$, where $j=k+1,\ldots,l$. We assert that $\rank(\phi_p(G_{k,j})) = k$ for $j=k+1,\ldots,l$. Without loss of generality, we consider the case $j=k+1$. As $G_k$ is a submatrix of $G_{k,k+1}$, we have $\rank(\phi_p(G_{k,k+1})) \geq k$. For any given strictly increasing sequence $1\leq i_1 < \cdots < i_{k+1} \leq l$, the following determinant
  \[p^{e_k+s} \cdot \det\left( G_{k,k+1} \begin{pmatrix} ~ 1 ~ \cdots ~ k ~ k+1 \\ i_1 ~ \cdots ~ i_k ~ i_{k+1} \end{pmatrix}\right)\]
  is a $(k+1)\times (k+1)$ minor of $A$. Since $p^{s_k+s_{k+1}} \mid d_{k+1}(A)$ and $s<s_{k+1}$, we obtain
  \[p \mid \det\left( G_{k,k+1} \begin{pmatrix} ~ 1 ~ \cdots ~ k ~ k+1 \\ i_1 ~ \cdots ~ i_k ~ i_{k+1} \end{pmatrix}\right).\]
  It follows that $p$ divides all $(k+1)\times (k+1)$ minors of $G_{k,k+1}$. Consequently, $\rank(\phi_p(G_{k,k+1})) = k$. According to Lemma \ref{lemma-25JSSC-1}, $\rank(\phi_p(G)) = k$.

  \textbf{Third Case:} there exists an integer $\tau$ with $1\leq \tau \leq k-1$ such that
  \[s_1\leq \cdots \leq s_{\tau} < s_{\tau+1}  = \cdots = s_k = s.\]

  We assert that there exists a strictly increasing sequence $\tau+1 \leq i_1 < \cdots <i_{k - \tau} \leq l$, the $k\times l$ submatrix $G_k^{(\vec{\tau})}$ formed by the first $\tau$ rows and the $i_1$-th, $\ldots$, $i_{k - \tau}$-th rows of $G$ satisfies $\rank(\phi_p(G_k^{(\vec{\tau})}))=k$, where $\vec{\tau}=(i_1,\ldots,i_{k - \tau})$. If otherwise, $p$ divides all $k\times k$ minors of $G_k^{(\vec{\tau})}$. Let $A_k^{(\vec{\tau})}$ be the $k\times l$ submatrix formed by the first $\tau$ rows and the $i_1$-th, $\ldots$, $i_{k - \tau}$-th rows of $A$. Then
  \[p^{e_{\tau}+(k-\tau)s+1} \mid d_k(A_k^{(\vec{\tau})}).\]
   Let $G = (g_{ij})_{l\times l}$, where $g_{ij}\in \R$ for $1\leq i,j \leq l$. Then
  \[ A = \begin{pmatrix}
     p^{s_1}g_{11} & \cdots & p^{s_1}g_{1\tau} & p^{s_1}g_{1,\tau+1} & \cdots & p^{s_1}g_{1l} \\
     \vdots & \ddots & \vdots & \vdots & \ddots & \vdots \\
     p^{s_\tau}g_{\tau1} & \cdots & p^{s_\tau}g_{\tau\tau} & p^{s_\tau}g_{\tau,\tau+1} & \cdots & p^{s_\tau}g_{\tau l} \\
     p^{s}g_{\tau+1,1} & \cdots & p^{s}g_{\tau+1,\tau} & p^{s}g_{\tau+1,\tau+1} & \cdots & p^{s}g_{\tau+1,l} \\
     \vdots & \ddots & \vdots & \vdots & \ddots & \vdots \\
     p^{s}g_{l1} & \cdots & p^{s}g_{l\tau} & p^{s}g_{l,\tau+1} & \cdots & p^{s}g_{ll}
  \end{pmatrix}.\]
  For any given two strictly increasing sequences $\{j_1,\ldots,j_k\}$ with $1\leq j_1< \cdots < j_k \leq l$ and $\{t_1,\ldots,t_k\}$ with $1\leq t_1< \cdots < t_k \leq l$, we have
  \[p^{s_{j_1}+\cdots+s_{j_k}}\mid \det\left(A\begin{pmatrix} j_1\cdots j_k \\ t_1 \cdots t_k\end{pmatrix}\right).\]
  If $(j_1,\ldots,j_{\tau}) \neq \vec{\tau}$, then $s_{j_1}+\cdots+s_{j_k} > e_{\tau}+(k-\tau)s$. This implies that
  \[p^{e_{\tau}+(k-\tau)s+1} \mid d_k(A).\]
  Since $d_k(A) = p^{e_{\tau}+(k-\tau)s}q_k$, we have $p\mid q_k$. This contradicts the fact that $\gcd(p,q_k)=1$. Therefore, $\rank(\phi_p(G_k^{(\vec{\tau})}))=k$. For any given integer $\delta\in \{1,\ldots,l\} \setminus \{1,\ldots,\tau,i_1,\ldots,i_{k-\tau}\}$, let $G_{k,\delta}^{(\vec{\tau})}$ be the $(k+1)\times l$ submatrix formed by $G_{k}^{(\vec{\tau})}$ and the $\delta$-th row of $G$. Since $p^{e_{\tau}+(k-\tau)s+s_{k+1}}\mid d_{k+1}(A)$, we get
  \[p^{e_{\tau}+(k-\tau)s+s_{k+1}}\mid p^{e_{\tau}+(k+1-\tau)s}d_{k+1}(G_{k,\delta}^{(\vec{\tau})}).\]
  By the fact that $s_{k+1}>s$, we can draw the conclusion that $p\mid d_{k+1}(G_{k,\delta}^{(\vec{\tau})})$. Based on Lemma \ref{lemma-25JSSC-1}, $\rank(\phi_p(G)) = k$.
 \end{proof}

 It is a classical result that every full-rank polynomial matrix over $\K[x_1,x_2]$ admits a primitive factorization \citep{Guiver1982}. The following lemma establishes an analogous factorization property over $\R$, while requiring an additional technical condition.

 \begin{lemma}\label{423-lemma-1}
  Let $F\in \M_{l\times l}(\R)$ with $\det(F) = gh$, where $g\in \K[x_1]$ and $h\in \R$ satisfy $\gcd(g,h)=1$. If $\J_i(F)=\R$ for $i=1,\ldots,l$, then there exist $G,H\in \M_{l\times l}(\R)$ such that
  \[F = GH ~ \text{and} ~ \det(G) = g.\]
 \end{lemma}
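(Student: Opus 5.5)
The plan is to peel off the irreducible factors of $g$ one at a time, using Lemmas \ref{Zheng-lemma} and \ref{419-lemma-1} to extract a diagonal factor on the left. The argument is an induction on $\deg(g)$; when $g$ is a nonzero constant we take $G=\mathbf{I}_l$ (scaled by $g$) and $H=g^{-1}F$.

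For the inductive step, let $p\in\K[x_1]$ be an irreducible factor of $g$. Since $\gcd(g,h)=1$, $p\nmid h$. Let $\diag(p^{s_1},\ldots,p^{s_l})$ be the Smith normal form of $F$ w.r.t. $p$. Set $k$ to be the number of indices with $s_i=0$; then $k\le l-1$ because $p\mid\det(F)$.

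We build a factorization
\[F\sim_{\R}\diag(p^{s_1},\ldots,p^{s_k},p^{s},\ldots,p^{s})\cdot G_s,\]
raising $s$ step by step from $s=0$, where $F=\mathbf{I}_l\cdot F$. At each stage, where $s_k\le s<s_{k+1}$ and $k$ is the largest index with $s_k\le s$, Lemma \ref{419-lemma-1} gives $\J_i(G_s)=\R$ for $i\le k$ and $\rank(\phi_p(G_s))=k$. If $k\ge1$, Lemma \ref{Zheng-lemma} factors $G_s=U\diag(1,\ldots,1,p,\ldots,p)G_{s+1}$ with $U\in\GL_l(\R)$. If $k=0$, then $\phi_p(G_s)=0$, so $G_s=pG_{s+1}$ directly. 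We then need to absorb $U$ so that the diagonal factor stays on the left. Lemma \ref{424-lemma-1} does this only when the whole matrix is a pure $p$-power, so instead I would apply Lemma \ref{Zheng-lemma} to $F$ itself and use $\diag(p^{s_1},\ldots)\,U\,\diag(1,\ldots,p,\ldots)$ restricted to the $p$-part. Concretely, the cleaner route is simply to prove the weaker statement that $F=U_1\,D\,G'$ with $D=\diag(1,\ldots,1,p,\ldots,p)$. Apply Lemma \ref{Zheng-lemma} to $F$ with the $k$ above. This is legitimate because $\rank(\phi_p(F))=k$: the localization at $p$ has Smith form with exactly $k$ units.

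This yields $F=U_1DF'$ with $\det(F')=(g/p^{l-k})h$. We then need $\J_i(F')=\R$ for all $i$ to continue the induction. Here Lemma \ref{Lu-2026-lemma-LAA} does not apply directly, since $\gcd(\det D,\det F')$ may be nontrivial. I expect this to be the main obstacle. I would handle it by the same minor-by-minor gcd argument as in the first part of the proof of Lemma \ref{419-lemma-1}: write the minors of $DF'$ as $p^{e}$ times minors of $F'$, factor out the $p$-free part of $d_i$, and deduce that the reduced minors of $F'$ generate $\R$. If that fails, the fallback is to carry along the weaker hypothesis actually needed, namely that $\J_i$ is the unit ideal only at primes dividing $g$, i.e. locally. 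Lemma \ref{Zheng-lemma} needs $\J_k=\R$, but only its $p$-local content matters.

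Finally, the induction gives $F=U_1D_1U_2D_2\cdots U_rD_rH$, where each $D_j$ is a diagonal matrix of $p_j$-powers. Set $G=U_1D_1\cdots U_rD_r$. Then $\det(G)$ equals $g$ up to a unit constant, and we rescale the first row of $G$ and $H$ to get $\det(G)=g$ exactly. The rank computation over $\phi_p$ and the preservation of the unit-ideal hypothesis are the delicate points; the assembly itself is routine.
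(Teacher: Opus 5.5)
Your proof is incomplete at exactly the step you flag as the main obstacle, and the tool you propose does not close it. After writing $F=U_1DF'$ with $D=\diag(1,\ldots,1,p,\ldots,p)$ ($k$ ones), you must know $k':=\rank(\phi_p(F'))$ and have $\J_{k'}(F')=\R$ before Lemma \ref{Zheng-lemma} can be applied again. Since $k'$ is typically larger than $k$, and later primes may need any index, you really need $\J_i(F')=\R$ for all $i$.

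The argument from the first part of Lemma \ref{419-lemma-1} only covers $i\le k$. It works there because the minors built from the first $i$ rows carry the minimal $p$-exponent, and that exponent equals $v_p(d_i(F))=0$, where $v_p$ is the $p$-adic valuation. For $i>k$, every $i\times i$ minor of $DF'$ has the form $p^{e_I}m_{I,J}$ with $e_I=|I\cap\{k+1,\ldots,l\}|\ge i-k$. But $v_p(d_i(F))=s_{k+1}+\cdots+s_i$ exceeds $i-k$ as soon as some $s_j\ge 2$ with $k<j\le i$. After removing $p^{i-k}$ the gcd still contains powers of $p$, so the argument says nothing about $v_p(d_i(F'))$, nor therefore about $k'$. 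A reduced minor of $DF'$ equals $p^{c}$ times a reduced minor of $F'$, with $c=e_I-v_p(d_i(F))+v_p(d_i(F'))$. Without controlling $v_p(d_i(F'))$ you cannot show $c\ge 0$, so $\J_i(F)\subseteq\J_i(F')$ does not follow. Your fallback, weakening the hypothesis of Lemma \ref{Zheng-lemma} to a local one, would amount to reproving a cited result.

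Your reason for abandoning the first route rests on a misreading. The paper applies Lemma \ref{424-lemma-1} not to $F$ but to $A=\diag(p^{s_1},\ldots,p^{s_k},p^{s},\ldots,p^{s})\,U\,\diag(1,\ldots,1,p,\ldots,p)$, which is itself a pure $p$-power matrix; its hypotheses for $i\le k$ come from Cauchy--Binet applied to $F\sim AF_2$. Keeping a diagonal $p$-block on the left means that each intermediate stage only needs $\J_i$ for $i\le k$ and $\rank(\phi_p)=k$. That is exactly what Lemma \ref{419-lemma-1} supplies. Only after the whole $S_{p}$ has been extracted, when the cofactor's determinant is coprime to $p$, does the paper invoke Lemma \ref{Lu-2026-lemma-LAA} to recover $\J_i=\R$ for all $i$ before moving to the next prime.

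Alternatively, your simpler scheme can be repaired. It skips re-diagonalization, which is legitimate since only $\det(G)=g$ is required. The fix is a local computation over the discrete valuation ring $\R_{p\R}$:
\begin{itemize}
\item The first $k$ rows of $F'$ have rank $k$ modulo $p$, because $\phi_p(F)=\phi_p(U_1)\phi_p(D)\phi_p(F')$ has rank $k$.
\item Hence, over $\R_{p\R}$, $F'\sim\diag(\mathbf{I}_k,Y)$ and $DF'\sim\diag(\mathbf{I}_k,pY)$.
\item This gives $v_p(d_i(F'))=v_p(d_i(F))-(i-k)$ for $i>k$, and the $p$-free parts of $d_i(F)$ and $d_i(F')$ agree.
\item So $c=e_I-\max(i-k,0)\ge 0$, and $\J_i(F)\subseteq\J_i(F')$ for every $i$.
\end{itemize}
Some argument of this kind is needed for your proof to be complete. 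Also, for constant $g$ take $G=\diag(g,1,\ldots,1)$, not $g\mathbf{I}_l$.
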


 \begin{proof}
  Let $g = \alpha p_1^{t_1}\cdots p_r^{t_r}$ be an irreducible factorization of $g$, where $\alpha\in \K \setminus \{0\}$, $p_1,\ldots,p_r\in \K[x_1]$ are pairwise coprime irreducible factors, $r$ and $t_1,\ldots,t_r$ are positive integers. Without loss of generality, assume that the Smith normal form of $F$ w.r.t. $p_1$ is
  \begin{equation*}\label{427-lem-equ-01}
   S_{p_1} = \diag(p_1^{s_{11}}, p_1^{s_{21}}, \ldots,p_1^{s_{l1}}),
  \end{equation*}
  where $s_{11},s_{21},\ldots, s_{l1}$ are integers satisfying $0\leq s_{11} \leq s_{21} \leq \cdots \leq s_{l1}$ and $t_1 = \sum_{j=1}^{l}s_{j1}$. Since $p_1^{s_{11}}\mid d_1(F)$, we may factor out $p_1^{s_{11}}$ from $F$ to obtain
  \begin{equation*}\label{427-lem-equ-02}
   F = \diag(p_1^{s_{11}}, p_1^{s_{11}}, \ldots,p_1^{s_{11}}) \cdot F_1,
  \end{equation*}
  where $F_1\in \M_{l\times l}(\R)$. If $s_{11} = s_{21}$, then we consider the order relation between $s_{21}$ and $s_{31}$. Otherwise, by Lemma \ref{419-lemma-1} we have $d_1(F_1) = \frac{d_1(F)}{p_1^{s_{11}}}$, $\J_1(F_1)=\R$ and $\rank(\phi_{p_1}(F_1)) = 1$. According to Lemma \ref{Zheng-lemma}, there exist $U_1\in \GL_l(\R)$ and $F_2\in \M_{l\times l}(\R)$ such that
  \begin{equation*}\label{427-lem-equ-03}
   F_1 = U_1 \cdot \diag(1,\underbrace{p_1,\ldots,p_1}_{l-1}) \cdot F_2.
  \end{equation*}
  Adopting the same proof argument as in the first part of Lemma \ref{419-lemma-1}, we deduce that $d_1(F_2) = d_1(F_1)$. Let
  \begin{equation*}\label{427-lem-equ-04}
   A = \diag(p_1^{s_{11}}, p_1^{s_{11}}, \ldots,p_1^{s_{11}}) \cdot U_1 \cdot \diag(1,p_1,\ldots,p_1).
  \end{equation*}
  Then $F = AF_2$. Clearly, $p_1^{s_{11}}\mid d_1(A)$. Since $d_1(F) = p_1^{s_{11}} \cdot d_1(F_2)$ and $\J_1(F) = \R$, it follows from the Cauchy-Binet formula that
  \begin{equation*}\label{427-lem-equ-05}
  \J_1(A) = \J_1(F_2) = \R ~ \text{and} ~ d_1(A) =  p_1^{s_{11}}.
  \end{equation*}
  Based on Lemma \ref{424-lemma-1}, we have
  \begin{equation*}\label{427-lem-equ-06}
   A\sim_{\R} S_A = \diag(p_1^{s_{11}}, p_1^{s_{11}+1}, \ldots,p_1^{s_{11}+1}),
  \end{equation*}
  i.e., there are $V_1,V_2\in \GL_l(\R)$ such that $A = V_1S_AV_2$. Set $F_3 = V_2F_2$. Then
  \begin{equation*}\label{427-lem-equ-07}
   F_3\sim_{\R} F_2 ~ \text{and} ~ F \sim_{\R} S_A F_3 = \diag(p_1^{s_{11}}, p_1^{s_{11}+1}, \ldots,p_1^{s_{11}+1}) \cdot F_3.
  \end{equation*}
  Repeating the above computational process a total of $s_{21}-s_{11}$ times, we obtain
  \begin{equation*}\label{427-lem-equ-08}
   F\sim_{\R} \diag(p_1^{s_{11}}, p_1^{s_{21}}, \ldots,p_1^{s_{21}}) \cdot F_{N_{21}},
  \end{equation*}
  where $F_{N_{21}}\in \M_{l\times l}(\R)$. For $j$ ranging from $2$ to $l-1$, we successively compare the order relations between $s_{j1}$ and $s_{j+1,1}$ by the same method above, until we have
  \begin{equation*}\label{427-lem-equ-09}
   F\sim_{\R} \diag(p_1^{s_{11}}, p_1^{s_{21}}, p_1^{s_{31}}, \ldots,p_1^{s_{l1}}) \cdot F_{N_{l1}} = S_{p_1} \cdot F_{N_{l1}},
  \end{equation*}
  where $F_{N_{l1}}\in \M_{l\times l}(\R)$. Let the Smith normal form of $F$ w.r.t. $p_2$ be
  \begin{equation*}\label{427-lem-equ-010}
   S_{p_2} = \diag(p_2^{s_{12}}, p_2^{s_{22}}, \ldots,p_2^{s_{l2}}),
  \end{equation*}
  where $s_{12},s_{22},\ldots,s_{l2}$ are integers satisfying $0\leq s_{12} \leq s_{22} \leq \cdots \leq s_{l2}$ and $t_2 = \sum_{j=1}^{l}s_{j2}$. Since $\gcd(\det(S_{p_1}), \det(F_{N_{l1}})) = 1$, by Lemma \ref{Lu-2026-lemma-LAA} we have that the Smith normal form of $F_{N_{l1}}$ w.r.t. $p_2$ is $S_{p_2}$, and $\J_i(F_{N_{l1}}) = \R$ for $i=1,\ldots,l$. Following the same argument as above, we conclude that
  \begin{equation*}\label{427-lem-equ-011}
   F_{N_{l1}}\sim_{\R} S_{p_2}\cdot F_{N_{l2}},
  \end{equation*}
  where $F_{N_{l2}}\in \M_{l\times l}(\R)$. Let the Smith normal form of $F$ w.r.t. $p_i$ be
  \begin{equation*}\label{427-lem-equ-012}
   S_{p_k} = \diag(p_k^{s_{1k}}, p_k^{s_{2k}}, \ldots,p_k^{s_{lk}}),
  \end{equation*}
  where $k=3,\ldots,r$, $s_{1k},s_{2k},\ldots,s_{lk}$ are integers satisfying  $0\leq s_{1k} \leq s_{2k} \leq \cdots \leq s_{lk}$ and $t_k = \sum_{j=1}^{l}s_{jk}$. By repeating the above reasoning for all integers $k$ with $3\leq k \leq r$, we deduce that
  \begin{equation*}\label{427-lem-equ-013}
   F\sim_{\R} S_{p_1}Q_1 S_{p_2} Q_2 \cdots S_{p_r} Q_r F_{N_{ll}},
  \end{equation*}
  where $Q_1,\ldots,Q_r\in \GL_l(\R)$ and $F_{N_{ll}}\in \M_{l\times l}(\R)$. Then there are $Q_0,Q_{r+1}\in \GL_l(\R)$ such that
  \begin{equation*}\label{427-lem-equ-014}
   F = Q_0S_{p_1}Q_1S_{p_2}Q_2 \cdots S_{p_r} Q_r F_{N_{ll}} Q_{r+1}.
  \end{equation*}
  Assume that $\delta = \det(Q_0Q_1\cdots Q_{r-1})$. Then $\delta\in \K \setminus \{0\}$. Let $M = \diag(1,\ldots,1, \frac{\alpha}{\delta})$. Then $M\in \GL_l(\R)$. Set
  \begin{equation*}\label{427-lem-equ-015}
   G = Q_0S_{p_1}Q_1S_{p_2}Q_2 \cdots S_{p_r}M ~ \text{and} ~ H = M^{-1}Q_r F_{N_{ll}} Q_{r+1}.
  \end{equation*}
  Then $F = GH$ and $\det(G) = g$.
 \end{proof}

 We next present two analogous lemmas, which differ from Lemmas \ref{419-lemma-1} and \ref{423-lemma-1} only by replacing the irreducible polynomial $p$ with $\varphi_r$ and the univariate polynomial $g \in \K[x_1]$ with a power of $\varphi_r$. Their proofs are essentially identical and are omitted.

 \begin{lemma}\label{420-coro-1}
  Let $F\in \M_{l\times l}(\R)$ with $\varphi_r\mid \det(F)$, where $r$ is an integer with $2\leq r \leq n$. Let $\diag(\varphi_r^{s_{1r}},\ldots,\varphi_r^{s_{lr}})$ be the Smith normal form of $F$ w.r.t. $\varphi_r$, where $s_{1r},\ldots,s_{lr}$ are integers satisfying $0\leq s_{1r}\leq \cdots \leq s_{lr}$. Suppose there exist integers $k ~ (1\leq k \leq l-1)$ and $s ~ (s_{kr} \leq s < s_{k+1,r})$ such that
  \[ F \sim_{\R} \diag(\varphi_r^{s_{1r}},\ldots,\varphi_r^{s_{kr}},\varphi_r^s,\ldots,\varphi_r^s) \cdot G,\]
  where $G\in \M_{l\times l}(\R)$. If $\J_i(F) = \R$ for $i=1,\ldots,l$, then
  \begin{enumerate}
    \item $d_i(G) = \frac{d_i(F)}{\varphi_r^{s_{1r}+\cdots+s_{ir}}}$ and $\J_i(G) = \R$ for $i=1,\ldots,k$;

    \item $\rank(G(x_1,\ldots,x_{r-1},f_r,x_{r+1},\ldots,x_n)) = k$.
  \end{enumerate}
 \end{lemma}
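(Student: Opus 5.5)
The plan is to transport the proof of Lemma \ref{419-lemma-1} verbatim, replacing the irreducible $p\in\K[x_1]$ by $\varphi_r$ and the reduction map $\phi_p$ by the substitution $x_r\mapsto f_r(x_1,\ldots,x_{r-1})$. The key observation is that this substitution is a ring homomorphism $\psi_r:\R\to\R_r$ whose kernel is exactly $\langle\varphi_r\rangle_{\R}$. Indeed, $\R=\R_r[\varphi_r]$, because $x_r=\varphi_r+f_r$ and $f_r$ does not involve $x_r$; hence $\R/\langle \varphi_r\rangle\cong \R_r$, an integral domain. Consequently, for any matrix $M$ over $\R$, $\rank(\psi_r(M))\ge k$ if and only if some $k\times k$ minor of $M$ is not divisible by $\varphi_r$. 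Also, $\rank(\psi_r(M))\le k$ if and only if $\varphi_r$ divides every $(k+1)\times(k+1)$ minor. This is the only property of $\phi_p$ used in Lemma \ref{419-lemma-1}. The analogue of Lemma \ref{lemma-25JSSC-1} holds for $\psi_r$ with the same proof: row rank over the field of fractions of $\R_r$ is determined by extending a maximal independent set of rows.

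For part 1, I would set $e_i=\sum_{j\le i}s_{jr}$ and write $d_i(F)=\varphi_r^{e_i}q_i$ with $\gcd(\varphi_r,q_i)=1$; this uses that the Smith normal form w.r.t. $\varphi_r$ records the exact $\varphi_r$-adic valuation of each $d_i(F)$. Put $A=\diag(\varphi_r^{s_{1r}},\ldots,\varphi_r^{s_{kr}},\varphi_r^s,\ldots,\varphi_r^s)\,G$. By Proposition \ref{lemma-reduced-2}, $d_i(A)=d_i(F)$ and $\J_i(A)=\R$. The $i_0\times i_0$ minors of $A$ are $\varphi_r^{e_{i_0}}$ times the minors of $G$ from the first $i_0$ rows, together with $\varphi_r^{e'}$ times the other minors for some $e'\ge e_{i_0}$. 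Factoring out $\varphi_r^{e_{i_0}}$ and using coprimality of $q_{i_0}$ with $\varphi_r$ gives $d_{i_0}(G)=q_{i_0}$. Then the reduced minors of $A$ lie in the ideal generated by the reduced minors of $G$, so $\J_{i_0}(G)=\R$.

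For part 2, the lower bound $\rank(\psi_r(G))\ge k$ is immediate, since $d_k(G)=q_k$ is not divisible by $\varphi_r$. For the upper bound I would split into the same three cases: $s_{1r}=\cdots=s_{kr}=s$; $s_{kr}<s$; and $s_{\tau r}<s_{\tau+1,r}=\cdots=s_{kr}=s$ for some $1\le\tau\le k-1$.

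In the first case, every $(k+1)$-minor of $A$ equals $\varphi_r^{(k+1)s}$ times a minor of $G$ and is divisible by $\varphi_r^{ks+s_{k+1,r}}$. Since $s_{k+1,r}>s$, $\varphi_r$ divides every $(k+1)$-minor of $G$.

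In the other cases, I first locate a $k$-row submatrix $G_k$ with $\rank(\psi_r(G_k))=k$: the first $k$ rows in the second case, and the first $\tau$ rows plus suitable rows among those with exponent $s$ in the third. Otherwise the $\varphi_r$-valuation of $d_k(A)$ would exceed $e_k$, forcing $\varphi_r\mid q_k$. I then show that adjoining any further row yields a submatrix whose $(k+1)$-minors are divisible by $\varphi_r$. This follows by comparing with $\varphi_r^{e_k+s_{k+1,r}}\mid d_{k+1}(A)$ and using $s<s_{k+1,r}$. The $\psi_r$-version of Lemma \ref{lemma-25JSSC-1} then gives $\rank(\psi_r(G))=k$.

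The main obstacle is the third case. There the valuation bookkeeping must identify a row selection achieving exactly the minimal exponent $e_k$, and the exponents attached to the rows with exponent $s$ are all equal, so the argument must rule out every choice simultaneously. Beyond that, the only genuinely new point is justifying that $\varphi_r$ is prime with quotient ring $\R_r$, so that rank over $\R/\langle\varphi_r\rangle$ is well-behaved. This follows from the triangular change of variables $x_r\mapsto x_r+f_r$, which is a $\K$-automorphism of $\R$ sending $\varphi_r$ to $x_r$.
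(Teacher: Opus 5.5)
Your proposal is correct and follows essentially the same route as the paper, which obtains this lemma by transporting the proof of Lemma \ref{419-lemma-1} verbatim: $p$ is replaced by $\varphi_r$ and $\phi_p$ by the substitution $x_r\mapsto f_r$, with the same $\varphi_r$-adic gcd computation for part 1 and the same three-case analysis plus row-extension via Lemma \ref{lemma-25JSSC-1} for part 2. Your explicit check that $\R/\langle\varphi_r\rangle\cong\R_r$ is an integral domain, so that rank after substitution is governed by $\varphi_r$-divisibility of minors, supplies the one point the paper leaves implicit.
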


 \begin{lemma}\label{425-lemma-3}
  Let $F\in \M_{l\times l}(\R)$ with $\det(F) = f\varphi_r^{t_r}$, where $r$ is an integer with $2\leq r \leq n$, $f\in \R$ satisfies $\gcd(f,\varphi_r)=1$, and $t_r$ is a positive integer. If $\J_i(F) = \R$ for $i=1,\ldots,l$, then there exist $F_1,F_2\in \M_{l\times l}(\R)$ such that
  \[F=F_1F_2 ~ \text{and} ~ \det(F_2) = \varphi_r^{t_r}.\]
 \end{lemma}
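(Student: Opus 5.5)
The plan is to mirror the proof of Lemma \ref{423-lemma-1}, with $\varphi_r$ in place of $p_1$ and the single factor $\varphi_r^{t_r}$ in place of $g$. The one difference is that the factor $\det(F_2)=\varphi_r^{t_r}$ must sit on the \emph{right}. So we work with the transpose, or equivalently peel off $\varphi_r$-powers from the column side.

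Let $F'=F^{T}$. Transposition preserves all minors, so $\det(F')=f\varphi_r^{t_r}$, and $d_i(F')=d_i(F)$, $\J_i(F')=\J_i(F)=\R$ for all $i$. The Smith normal form of $F'$ w.r.t. $\varphi_r$ is therefore the same $\diag(\varphi_r^{s_{1r}},\ldots,\varphi_r^{s_{lr}})$, with $\sum_j s_{jr}=t_r$. We now run the iterative procedure of Lemma \ref{423-lemma-1} on $F'$.

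\textbf{Start.} Since $\varphi_r^{s_{1r}}\mid d_1(F')$, write $F'=\varphi_r^{s_{1r}}\mathbf{I}_l\cdot F'_1$.

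\textbf{Inductive step.} Suppose we have reached $F'\sim_{\R}\diag(\varphi_r^{s_{1r}},\ldots,\varphi_r^{s_{kr}},\varphi_r^{s},\ldots,\varphi_r^{s})\cdot G$ with $s_{kr}\le s<s_{k+1,r}$. By Lemma \ref{420-coro-1}:
\begin{enumerate}
\item $d_i(G)=d_i(F')/\varphi_r^{s_{1r}+\cdots+s_{ir}}$ and $\J_i(G)=\R$ for $i\le k$;
\item $\rank(G(x_1,\ldots,x_{r-1},f_r,x_{r+1},\ldots,x_n))=k$.
\end{enumerate}
Lemma \ref{424-lemma-2} then factors $G=V\diag(1,\ldots,1,\varphi_r,\ldots,\varphi_r)H$ with $V\in\GL_l(\R_r)$.

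Let $A=\diag(\varphi_r^{s_{1r}},\ldots,\varphi_r^{s},\ldots)\cdot V\cdot\diag(1,\ldots,1,\varphi_r,\ldots,\varphi_r)$, so that $F'\sim AH$. As in Lemma \ref{423-lemma-1}, the Cauchy--Binet formula gives $d_i(A)=\varphi_r^{s_{1r}+\cdots+s_{ir}}$ and $\J_i(A)=\R$ for $i\le k$. To justify this, note that $\det(A)$ is a power of $\varphi_r$, while $d_i(H)$ is coprime to $\varphi_r$ by the first part of Lemma \ref{420-coro-1} applied again. Lemma \ref{502-lemma-1} then gives $A\sim_{\R}\diag(\varphi_r^{s_{1r}},\ldots,\varphi_r^{s_{kr}},\varphi_r^{s+1},\ldots,\varphi_r^{s+1})$. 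Absorbing the right unimodular factor into $H$ advances $s$ by one.

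\textbf{Termination.} Iterating over $s$ and $k$ yields
\[F'=Q_0\,S_{\varphi_r}\,Q_1 F'_N\]
with $Q_0,Q_1\in\GL_l(\R)$ and $S_{\varphi_r}=\diag(\varphi_r^{s_{1r}},\ldots,\varphi_r^{s_{lr}})$. Since $\det(S_{\varphi_r})=\varphi_r^{t_r}$, we get $\det(F'_N)=c f$ for some $c\in\K\setminus\{0\}$. Let $\delta=\det(Q_0)$ and $M=\diag(1,\ldots,1,\delta)$. Set $G'=Q_0S_{\varphi_r}M^{-1}$, so that $\det(G')=\varphi_r^{t_r}$, and $H'=MQ_1F'_N$. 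Then $F'=G'H'$, and transposing gives $F=F_1F_2$ with $F_1=H'^{T}$ and $F_2=G'^{T}$, $\det(F_2)=\varphi_r^{t_r}$. Only one irreducible factor is involved here, so no analogue of the loop over $p_2,\ldots,p_r$ in Lemma \ref{423-lemma-1}, and hence no appeal to Lemma \ref{Lu-2026-lemma-LAA}, is needed.

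\textbf{Main obstacle.} The delicate point is the verification, inside each iteration, that the hypotheses of Lemma \ref{502-lemma-1} hold: $d_i(A)$ must be exactly the $\varphi_r$-power and $\J_i(A)=\R$ for $i\le k$. This requires tracking the $\varphi_r$-free parts $q_i$ of $d_i(F')$ through $G$ and $H$, exactly as in the first part of Lemma \ref{419-lemma-1}. Since Lemma \ref{420-coro-1} already packages the relevant statement for $G$, I would first try applying Lemma \ref{Lu-2026-lemma-LAA}-type reasoning to the truncated products, using $\gcd(\det A,\ d_i(H)\text{-part})=1$, to get $\J_i(A)=\R$ directly from $\J_i(AH)=\R$.
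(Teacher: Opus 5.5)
Your proposal is correct and is essentially the paper's argument: transpose $F$, run the $\varphi_r$-extraction loop of Lemma \ref{423-lemma-1} (via Lemmas \ref{420-coro-1}, \ref{424-lemma-2} and \ref{502-lemma-1}) on $F^{\rm T}$, normalize the determinant with a constant diagonal factor, and transpose back.

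One justification needs sharpening, namely the step giving $\J_i(A)=\R$: coprimality of $d_i(H)$ with $\varphi_r$ is not enough there, but the first-part argument of Lemma \ref{420-coro-1} applied to $G\sim_{\R}\diag(1,\ldots,1,\varphi_r,\ldots,\varphi_r)\cdot H$ gives the equality $d_i(H)=d_i(G)$, hence $d_i(AH)=d_i(A)\,d_i(H)$ for $i\le k$. Cauchy--Binet then places every reduced minor of $AH$ in $\J_i(A)$, so no Lemma \ref{Lu-2026-lemma-LAA}-type argument is needed; such an argument would in any case require more than a gcd condition on the $d_i$'s.
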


 The proof of Lemma \ref{425-lemma-3} differs slightly from that of Lemma \ref{423-lemma-1}. We first take the transpose of $F$ and set $A = F^{\rm T}$. We then factorize $A$ following the same line of reasoning as in Lemma \ref{423-lemma-1} to obtain $A=BC$ with $\det(B) = \varphi_r^{t_r}$. The desired result follows immediately by setting $F_1 = C^{\rm T}$ and $F_2 = B^{\rm T}$. This argument is simpler than that of Lemma \ref{423-lemma-1}, as it only involves extracting a power of $\varphi_r$, whereas Lemma \ref{423-lemma-1} requires factorizing $g\in \K[x_1]$ into distinct irreducible factors and successively extracting the powers of each factor.

 With the above matrix factorization results established, we now present the following lemma.

 \begin{lemma}\label{425-lemma-2}
  Let $A\in \M_{l\times l}(\R)$, and let $g_1,\ldots,g_l\in \K[x_1]$ satisfy $g_1\mid \cdots \mid g_l$. Suppose there exist an integer $k$ with $1\leq k \leq l-1$ and $U\in \GL_l(\R_2)$ such that
  \[A = \diag(g_1\varphi_2^{s_1},\ldots,g_k\varphi_2^{s_k},g_{k+1}\varphi_2^{s},
  \ldots,g_l\varphi_2^{s}) \cdot U \cdot \diag(\underbrace{1,\ldots,1}_k,\varphi_2,\ldots,\varphi_2),\]
  where $s_1,\ldots,s_k$ and $s$ are integers satisfying $0\leq s_1\leq \cdots \leq s_k \leq s$. If $\J_i(A) = \R$ for $i=1,\ldots,l$, then
  \[A\sim_{\R} \diag(g_1\varphi_2^{s_1},\ldots,g_k\varphi_2^{s_k},g_{k+1}\varphi_2^{s+1},
  \ldots,g_l\varphi_2^{s+1}).\]
 \end{lemma}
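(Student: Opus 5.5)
Set $D = \diag(g_1,\ldots,g_l)$, $\Phi = \diag(\varphi_2^{s_1},\ldots,\varphi_2^{s_k},\varphi_2^{s},\ldots,\varphi_2^{s})$ and $E=\diag(1,\ldots,1,\varphi_2,\ldots,\varphi_2)$, so that $A = D\,\Phi\, U E$. The strategy is to use Lemma \ref{502-lemma-1} to reduce the $\varphi_2$-part $B=\Phi U E$ to diagonal form, and then to recombine it with $D$ via Lemma \ref{414-lem-1}. The obstacle is that $D$ sits on the left of $B$, and the unimodular matrices from Lemma \ref{502-lemma-1} need not commute with $D$.

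\textbf{Step 1: reduce the $\varphi_2$-part.} Since $\det(D)\in\K[x_1]$ and $\det(B)$ is a power of $\varphi_2$, we have $\gcd(\det D,\det B)=1$. By Lemma \ref{Lu-2026-lemma-LAA}, $\J_i(B)=\R$ for all $i$ and $d_i(A)=d_i(D)\,d_i(B)$. Next we need $d_i(B)=\varphi_2^{s_1+\cdots+s_i}$ for $i\le k$. We get this by repeating the argument in the first part of Lemma \ref{419-lemma-1}, with $p$ replaced by $\varphi_2$. Concretely, $d_i(B)$ is a power of $\varphi_2$. Every $i\times i$ minor of $B$ is divisible by $\varphi_2^{s_1+\cdots+s_i}$. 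The minors taken from the first $i$ rows and the first $k$ columns equal $\varphi_2^{s_1+\cdots+s_i}$ times minors of the first $k$ columns of $U$, because the first $k$ diagonal entries of $E$ are $1$. Among these there is a unit multiple, since $U$ is unimodular and $\J_i(B)=\R$. I would verify this directly with Cauchy--Binet. Lemma \ref{502-lemma-1} with $r=2$ then gives $V_1,V_2\in\GL_l(\R)$ with $B=V_1 S V_2$, where $S=\diag(\varphi_2^{s_1},\ldots,\varphi_2^{s_k},\varphi_2^{s+1},\ldots,\varphi_2^{s+1})$.

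\textbf{Step 2: handle the left factor $D$ (main obstacle).} Now $A\sim D V_1 S$, and the difficulty is that $V_1$ lies in $\GL_l(\R)$ rather than $\GL_l(\R_2)$. So I would instead work with the $\varphi_2$-free description and aim to write $A = D\, W\, \diag(\varphi_2^{t_1},\ldots,\varphi_2^{t_l})$ with $W\in\GL_l(\R_2)$, after which Lemma \ref{414-lem-1} applies. Absorbing $\Phi$ into $U$ is impossible in general. The alternative I would try first is to redo the proof of Lemma \ref{502-lemma-1} in the variables $x_1,x_3,\ldots,x_n$, substituting $x_2=f_2(x_1)$. Put $\bar U = U$, which is independent of $x_2$. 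The reduced-minor condition on $A$, evaluated modulo $\varphi_2$, forces the lower-left $(l-k)\times k$ block of $U$ to be eliminable by column operations over $\R_2$ acting on the first $k$ columns. Those column operations commute past $E$ up to factors of $\varphi_2$. One then uses row operations over $\R_2$ within the block of equal exponent $s$, which commute with $D$ only when the corresponding $g_j$ divide appropriately; this is where $g_1\mid\cdots\mid g_l$ is used, since a row operation adding a multiple of row $j$ to row $i$ with $i>j$ is compatible with $D$ on the left. I expect this bookkeeping to be the crux.

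\textbf{Step 3: conclude.} Once $A$ is exhibited as $D\,W\,\diag(\varphi_2^{s_1},\ldots,\varphi_2^{s_k},\varphi_2^{s+1},\ldots,\varphi_2^{s+1})$ times a unimodular matrix on the right, with $W\in\GL_l(\R_2)$, Lemma \ref{414-lem-1} (applied with $h_i=g_i$, $r=2$, and the nondecreasing exponents $s_1\le\cdots\le s_k\le s+1$) yields that the Smith normal form is $\diag(g_1\varphi_2^{s_1},\ldots,g_l\varphi_2^{s+1})$. Its proof also provides the equivalence $A\sim_\R$ that Smith form. If that lemma only identifies the Smith form, I would finish with Lemma \ref{415-lemma-1}-style reasoning, or by applying Lemma \ref{425-lemma-1} to the $\varphi_2$-part and Lemma \ref{415-lemma-1} to the $g$-part, combined through the coprime factorization of Lemma \ref{423-lemma-1}.
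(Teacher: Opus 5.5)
Your Step~1 rests on a false claim. Unimodularity of $U$ controls the minors of full row blocks or full column blocks of $U$, not those of its top-left $i\times k$ corner. Likewise, $\J_i(B)=\R$ says nothing about $d_i(B)$, and the corner can even vanish.

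Take $l=2$, $k=1$, $g_1=g_2=1$, $s_1=0$, $s=1$ and $U=\left(\begin{smallmatrix}0&1\\1&0\end{smallmatrix}\right)$. Then $A=B=\left(\begin{smallmatrix}0&\varphi_2\\ \varphi_2&0\end{smallmatrix}\right)$ satisfies $\J_1(A)=\J_2(A)=\R$. But $d_1(A)=\varphi_2$, so $A\sim_{\R}\diag(\varphi_2,\varphi_2)$ and not $\diag(1,\varphi_2^2)$. So the statement as written is false. It needs the extra hypothesis $d_i(A)=g_1\cdots g_i\varphi_2^{s_1+\cdots+s_i}$ for $i=1,\ldots,k$, exactly as in Lemmas~\ref{424-lemma-1} and~\ref{502-lemma-1}, and no argument can derive it from the stated assumptions.

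The paper's proof gets this fact by applying Lemma~\ref{414-lem-1} with left factors $g_i\varphi_2^{s_i}$. The same example shows that lemma fails once the left diagonal entries carry powers of $\varphi_2$. Where the lemma is used in Theorem~\ref{425-Theorem-1}, the hypothesis does hold: $d_i(A)\,d_i(H)\mid d_i(F)$ for the cofactor $H$, with $d_i(H)=1$ for $i\le k$. So you should assume it; with it, your Step~1 goes through.

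Even then, your proposal does not prove the equivalence. Step~1 gives $A\sim_{\R}DV_1S$ with $V_1\in\GL_l(\R)$. This discards the structure the argument needs: $U$ does not involve $x_2$, so evaluating reduced minors at $x_2=f_2(x_1)$ yields information about $U$ itself. Step~2 is only a plan, and the bookkeeping you call the crux is never carried out. Step~3 misreads Lemma~\ref{414-lem-1}: it identifies the Smith normal form but supplies no unimodular transformations. Moreover, writing $A$ as $D\,W\,\diag(\varphi_2^{t_1},\ldots,\varphi_2^{t_l})$ with $W\in\GL_l(\R_2)$ just restates the hypothesis. The fallback through Lemma~\ref{423-lemma-1} is circular: $G\sim_{\R}S_G$ and $H\sim_{\R}S_H$ only give $GH\sim_{\R}S_GWS_H$ for some unimodular $W$, which is precisely the situation this lemma is meant to resolve.

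The missing ideas are the paper's three cases, and in each one every operation that must pass the diagonal factors is over $\R_2$ or acts on a block with univariate determinantal divisors.

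\emph{Case $s_1=\cdots=s_k=s$.} Factor out $\varphi_2^s$. The first $k$ columns of $DUE$ have entries in $\R_2$. The substitution $x_2=f_2(x_1)$ shows their reduced minors generate $\R_2$ with $d_i=g_1\cdots g_i$. Lemma~\ref{415-lemma-1} over $\R_2$ then diagonalizes these columns, and the divisibilities $g_i\mid v_{ij}$ clear the upper-right block. Lemmas~\ref{422-lemma-1} and~\ref{415-lemma-1} finish the remaining block, whose determinantal divisors lie in $\K[x_1]$.

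\emph{Case $s_k<s$.} The same substitution applied to $\J_k(A)$ forces the determinant of the top-left $k\times k$ block of $U$ to be a unit of $\R_2$. Column operations over $\R_2$, plus row operations that add multiples of earlier rows to later ones, then reduce to a block with univariate determinantal divisors.

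\emph{Mixed case.} The top-left $\tau\times k$ corner of $U$ is ZLP over $\R_2$. Quillen--Suslin over $\R_2$ then reduces this to the technique of the first case.
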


 \begin{proof}
  Let $S = \diag(g_1\varphi_2^{s_1},\ldots,g_k\varphi_2^{s_k},
  g_{k+1}\varphi_2^{s+1},\ldots,g_l\varphi_2^{s+1})$. By Lemma \ref{414-lem-1}, $S$ is the Smith normal form of $A$. We shall prove $A\sim_{\R} S$ by considering the following three cases separately.

  \textbf{First Case:} $s_1=\cdots=s_k = s$.

  Let
  \[A_1 = \diag(g_1,\ldots,g_k,g_{k+1},
  \ldots,g_l) \cdot U \cdot \diag(1,\ldots,1,\varphi_2,\ldots,\varphi_2).\]
  Then $A = \varphi_2^{s} \cdot A_1$. Clearly, $\J_i(A_1) = \R$ for $i=1,\ldots,l$, and the Smith normal form of $A_1$ is
  \[S_{A_1} = \diag(g_1,\ldots,g_k,g_{k+1}\varphi_2,\ldots,g_l\varphi_2).\]
  Set $U = (u_{ij})_{l\times l}$, where $u_{ij}\in \R_2$ for $1\leq i,j\leq l$. Then
  \[ A_1 = \begin{pmatrix}
    g_1u_{11} & \cdots & g_1u_{1k} & g_1\varphi_2u_{1,k+1} & \cdots & g_1\varphi_2u_{1l} \\
    g_2u_{21} & \cdots & g_2u_{2k} & g_2\varphi_2u_{2,k+1} & \cdots & g_2\varphi_2u_{2l} \\
     \vdots & \ddots & \vdots & \vdots & \ddots & \vdots \\
    g_lu_{l1} & \cdots & g_lu_{lk} & g_l\varphi_2u_{l,k+1} & \cdots & g_l\varphi_2u_{ll}
    \end{pmatrix}.\]
  Let $B$ be the $l\times k$ submatrix formed by the first $k$ columns of $A_1$. We assert that
  \begin{equation*}\label{427-lemma-equ-1}
   d_i(B) = g_1\cdots g_i ~ \text{and} ~ \J_i(B) = \R_2 ~ \text{for} ~ i=1,\ldots,k.
  \end{equation*}
  For any given integer $i_0$ with $1\leq i_0 \leq k$, assume that $\alpha_1^{(i_0)},\ldots,\alpha_{N_{i_0}}^{(i_0)}\in \R_2$ are all $i_0\times i_0$ minors of $B$. Since $d_{i_0}(A_1) = g_1\cdots g_{i_0}$ and $B$ is a submatrix of $A_1$,
  \[ d_{i_0}(A_1)\mid \alpha_j^{(i_0)} ~ \text{for} ~j=1,\ldots,N_{i_0}.\]
  Set $h_j = \frac{\alpha_j^{(i_0)}}{d_{i_0}(A_1)}$, we obtain $h_j\in \R_2$ for $j=1,\ldots,N_{i_0}$. Let $\mathcal{C}$ be the set formed by all $i_0\times i_0$ reduced minors of $A_1$. Then $\{h_1,\ldots,h_{N_{i_0}}\}$ is a part of $\mathcal{C}$. If $\langle h_1,\ldots,h_{N_{i_0}} \rangle_{\R_2} \neq \R_2$, then there exists $\vec{\omega} = (\omega_1,\omega_3,\ldots,\omega_n)\in \overline{\K}^{n-1}$ such that
  \begin{equation*}\label{427-lemma-equ-2}
   h_j(\vec{\omega}) = 0 ~ \text{for} ~ j=1,\ldots,N_{i_0}.
  \end{equation*}
  For any $h\in \mathcal{C} \setminus \{h_1,\ldots,h_{N_{i_0}}\}$, it follows readily from the structure of $A_1$ and $\gcd(d_{i_0}(A_1),\varphi_2)=1$ that $\varphi_2\mid h$. Let $\omega_2 = f_2(\omega_1)$. Then
  \begin{equation}\label{427-lemma-equ-3}
   (\omega_1,\omega_2,\omega_3,\ldots,\omega_n)\in \mathds{V}(\langle \mathcal{C}\rangle_{\R}).
  \end{equation}
  Since $\langle \mathcal{C}\rangle_{\R} = \J_{i_0}(A_1)$, Equation \eqref{427-lemma-equ-3} contradicts the fact that $\J_{i_0}(A_1) = \R$. Thus, $\langle h_1,\ldots,h_{N_{i_0}} \rangle_{\R_2} = \R_2$. It follows that $d_{i_0}(B) = g_1\cdots g_{i_0}$ and $\J_{i_0}(B) = \R_2$. According to Lemma \ref{415-lemma-1}, we obtain
  \[B\sim_{\R_2} \begin{pmatrix}
                  g_1 &       &     \\
                      &\ddots &     \\
                      &       & g_k \\
                      &       &      \\
                      &       &
                \end{pmatrix} \triangleq S_B.\]
  Applying a finite sequence of elementary row and column operations over $\R_2$ to $A_1$, we conclude that
  \[A_1\sim_{\R_2} \begin{pmatrix}
    g_1 &        &     & \varphi_2v_{1,k+1} & \cdots & \varphi_2v_{1l} \\
        & \ddots &     & \vdots             & \ddots & \vdots  \\
        &        & g_k & \varphi_2v_{k,k+1} & \cdots & \varphi_2v_{kl} \\
        &        &     & \varphi_2v_{k+1,k+1} & \cdots & \varphi_2v_{k+1,l} \\
        &        &     & \vdots & \ddots & \vdots \\
        &        &     & \varphi_2v_{l,k+1} & \cdots & \varphi_2v_{ll}
    \end{pmatrix} \triangleq A_2,\]
  where $v_{ij}\in \R_2$ for $1\leq i \leq l$ and $k+1\leq j \leq l$. For any given integer $i_0$ with $1 \leq i_0 \leq k$, let
  \[D_j^{(i_0)} = \begin{pmatrix}
          g_1 &        &         & \varphi_2v_{1j} \\
              & \ddots &         & \vdots    \\
              &        & g_{i_0-1} & \varphi_2v_{i_0-1,j} \\
              &        &         & \varphi_2v_{i_0,j}
        \end{pmatrix} ~ \text{for} ~ j=k+1,\ldots,l.\]
  Since $D_j^{(i_0)}$ is an $i_0\times i_0$ submatrix of $A_2$, $d_{i_0}(A_2)\mid \det(D_j^{(i_0)})$. It follows that $g_{i_0}\mid v_{i_0,j}$ for $j=k+1,\ldots,l$. Finitely many elementary column operations over $\R$ on $A_2$ yield
  \[A_2 \sim_{\R} \begin{pmatrix}
    g_1 &        &     &   &   &   \\
        & \ddots &     &   &   &   \\
        &        & g_k &   &   &   \\
        &        &     & \varphi_2v_{k+1,k+1} & \cdots & \varphi_2v_{k+1,l} \\
        &        &     & \vdots & \ddots & \vdots \\
        &        &     & \varphi_2v_{l,k+1} & \cdots & \varphi_2v_{ll}
    \end{pmatrix} \triangleq A_3.\]
  Set $V = (v_{ij})\in \M_{(l-k)\times (l-k)}(\R_2)$, where $k+1\leq i,j\leq l$. Since $\R_2 \subset \R$,
  \begin{equation}\label{427-lemma-equ-4}
   A_1 \sim_{\R} A_3 = \diag(g_1,\ldots,g_k,\varphi_2,\ldots,\varphi_2) \cdot \diag(\mathbf{I}_k, V).
  \end{equation}
  According to Lemma \ref{422-lemma-1},
  \begin{equation*}\label{427-lemma-equ-5}
   d_i(V) = \frac{d_{k+i}(A_1)}{g_1\cdots g_k \varphi_2^i}=g_{k+1}\cdots g_{k+i} ~ \text{and} ~ \J_i(V) = \J_{k+i}(A_1)=\R ~ \text{for} ~ i = 1,\ldots, l-k.
  \end{equation*}
  Using Lemma \ref{415-lemma-1} again,
  \begin{equation}\label{427-lemma-equ-6}
   V\sim_{\R} \diag(g_{k+1},\ldots,g_l).
  \end{equation}
  Combining Equations \eqref{427-lemma-equ-4} and \eqref{427-lemma-equ-6}, we have  $A_1\sim_{\R} S_{A_1}$. Consequently, $A\sim_{\R} S$.

  \textbf{Second Case:} $s_k < s$.

  Let $U = (u_{ij})_{l\times l}$, where $u_{ij}\in \R_2$ for $1\leq i,j\leq l$. Then
  \[ A = \diag(g_1\varphi_2^{s_1},\ldots,g_k\varphi_2^{s_k},
         g_{k+1}\varphi_2^{s},\ldots,g_l\varphi_2^{s}) \cdot
    \begin{pmatrix}
     u_{11} & \cdots & u_{1k} & \varphi_2 u_{1,k+1} & \cdots & \varphi_2 u_{1l} \\
     \vdots & \ddots & \vdots & \vdots & \ddots & \vdots \\
     u_{k1} & \cdots & u_{kk} & \varphi_2 u_{k,k+1} & \cdots & \varphi_2 u_{kl} \\
     u_{k+1,1} & \cdots & u_{k+1,k} & \varphi_2 u_{k+1,k+1} & \cdots & \varphi_2 u_{k+1,l} \\
     \vdots & \ddots & \vdots & \vdots & \ddots & \vdots \\
     u_{l1} & \cdots & u_{lk} & \varphi_2 u_{l,k+1} & \cdots & \varphi_2 u_{ll}
    \end{pmatrix}.\]
   Set
   \[ U_{kk} = \begin{pmatrix} u_{11} & \cdots & u_{1k} \\ \vdots & \ddots & \vdots \\ u_{k1} & \cdots & u_{kk} \end{pmatrix}.\]
   Let $\det(U_{kk}) = u \in \R_2$. We assert that $u$ is a nonzero constant in $\R_2$. If otherwise, there exists $\vec{\omega}=(\omega_1,\omega_3,\ldots,\omega_n)\in \overline{\K}^{n-1}$ such that $u(\vec{\omega}) = 0$. Since $d_k(A) = g_1\cdots g_k \varphi_2^{s_1+\cdots+s_k}$ and $s_k < s$, $\varphi_2$ divides all $k\times k$ reduced minors of $A$ with the exception of $u$. Let $\omega_2 = f_2(\omega_1)$. Then $(\omega_1,\omega_2,\omega_3,\ldots,\omega_n)\in \mathds{V}(\J_k(A))$. This contradicts the fact that $\J_k(A) = \R$. Therefore, $U_{kk}\in \GL_{k}(\R_2)$. Then there exists $V_{kk}\in \GL_{k}(\R_2)$ such that $U_{kk} V_{kk} = \mathbf{I}_k$. It follows that
   \[ A \cdot \diag(V_{kk}, \mathbf{I}_{l-k}) =  \begin{pmatrix}
     g_1\varphi_2^{s_1} &   &   & g_1\varphi_2^{s_1+1} u_{1,k+1} & \cdots & g_1\varphi_2^{s_1+1} u_{1l} \\ & \ddots &  & \vdots & \ddots & \vdots \\
      &  & g_k\varphi_2^{s_k} & g_k\varphi_2^{s_k+1} u_{k,k+1} & \cdots & g_k\varphi_2^{s_k+1} u_{kl} \\
     g_{k+1}\varphi_2^{s}v_{k+1,1} & \cdots & g_{k+1}\varphi_2^{s}v_{k+1,k} & g_{k+1}\varphi_2^{s+1} u_{k+1,k+1} & \cdots & g_{k+1}\varphi_2^{s+1} u_{k+1,l} \\ \vdots & \ddots & \vdots & \vdots & \ddots & \vdots \\
     g_l\varphi_2^{s}v_{l1} & \cdots & g_l\varphi_2^{s}v_{lk} & g_l\varphi_2^{s+1} u_{l,k+1} & \cdots & g_l\varphi_2^{s+1} u_{ll}
    \end{pmatrix},\]
    where $v_{ij}\in \R_2$ for $k+1\leq i \leq l$ and $1\leq j \leq k$. Clearly, $\diag(V_{kk}, \mathbf{I}_{l-k})\in \GL_l(\R_2)$. By performing finitely many elementary row and column operations over $\R$ on $A \cdot \diag(V_{kk}, \mathbf{I}_{l-k})$, we obtain
  \begin{equation}\label{427-lemma-equ-7}
   A\sim_{\R} \begin{pmatrix}
     g_1\varphi_2^{s_1} &   &   &   &   &   \\ & \ddots &  &   &   &  \\
      &  & g_k\varphi_2^{s_k} &   &   &   \\
       &   &   & g_{k+1}\varphi_2^{s+1} v_{k+1,k+1} & \cdots & g_{k+1}\varphi_2^{s+1} v_{k+1,l} \\
         &   &   & \vdots & \ddots & \vdots \\
       &   &   & g_l\varphi_2^{s+1} v_{l,k+1} & \cdots & g_l\varphi_2^{s+1} v_{ll}
    \end{pmatrix} \triangleq A_1,
  \end{equation}
    where $v_{ij}\in \R_2$ for $k+1\leq i,j \leq l$. Let
    \[B = \begin{pmatrix}
         v_{k+1,k+1} & v_{k+1,k+2} & \cdots &  v_{k+1,l} \\
         \frac{g_{k+2}}{g_{k+1}}v_{k+2,k+1} & \frac{g_{k+2}}{g_{k+1}}v_{k+2,k+2} & \cdots & \frac{g_{k+2}}{g_{k+1}}v_{k+2,l} \\
         \vdots & \vdots & \ddots & \vdots \\
        \frac{g_{l}}{g_{k+1}} v_{l,k+1} & \frac{g_{l}}{g_{k+1}} v_{l,k+2} &\cdots & \frac{g_{l}}{g_{k+1}} v_{ll}
    \end{pmatrix}.\]
  Then it follows from Equation \eqref{427-lemma-equ-7} that
  \begin{equation}\label{427-lemma-equ-8}
   A_1 = \diag(g_1\varphi_2^{s_1},\ldots,g_k\varphi_2^{s_k}, g_{k+1}\varphi_2^{s+1},\ldots,g_{k+1}\varphi_2^{s+1}) \cdot \diag(\mathbf{I}_k, B).
  \end{equation}
  Since $A\sim_{\R} A_1$, by Lemma \ref{422-lemma-1} we obtain
   \[d_i(B) = \frac{g_{k+1}\cdots g_{k+i}}{g_{k+1}^i} ~ \text{and} ~ \J_i(B) = \J_{k+i}(A)=\R ~ \text{for} ~ i=1,\ldots,l-k.\]
  Using Lemma \ref{415-lemma-1} again, we have
  \begin{equation}\label{427-lemma-equ-9}
   B\sim_{\R} \diag(1,\frac{g_{k+2}}{g_{k+1}},\ldots,\frac{g_{l}}{g_{k+1}}).
  \end{equation}
  Combining Equations \eqref{427-lemma-equ-8} and \eqref{427-lemma-equ-9}, we have $A\sim_{\R} S$.

  \textbf{Third Case:} there exists an integer $\tau$ with $1\leq \tau \leq k-1$ such that
  \[s_1\leq \cdots \leq s_{\tau} < s_{\tau+1}  = \cdots = s_k = s.\]

  Let $U = (u_{ij})_{l\times l}$, where $u_{ij}\in \R_2$ for $1\leq i,j\leq l$. Then
  \[ A = \begin{pmatrix}
     g_1\varphi_2^{s_1}u_{11} & \cdots & g_1\varphi_2^{s_1}u_{1k} & g_1\varphi_2^{s_1+1}u_{1,k+1} & \cdots & g_1\varphi_2^{s_1+1}u_{1l} \\
     \vdots & \ddots & \vdots & \vdots & \ddots & \vdots \\
     g_{\tau}\varphi_2^{s_\tau}u_{\tau1} & \cdots & g_{\tau}\varphi_2^{s_\tau}u_{\tau k} & g_{\tau}\varphi_2^{s_\tau+1}u_{\tau,k+1} & \cdots & g_{\tau}\varphi_2^{s_\tau+1}u_{\tau l} \\
     g_{\tau+1}\varphi_2^{s}u_{\tau+1,1} & \cdots & g_{\tau+1}\varphi_2^{s}u_{\tau+1,k} & g_{\tau+1}\varphi_2^{s+1}u_{\tau+1,k+1} & \cdots & g_{\tau+1}\varphi_2^{s+1}u_{\tau+1,l} \\
     \vdots & \ddots & \vdots & \vdots & \ddots & \vdots \\
     g_l\varphi_2^{s}u_{l1} & \cdots & g_l\varphi_2^{s}u_{lk} & g_l\varphi_2^{s+1}u_{l,k+1} & \cdots & g_l\varphi_2^{s+1}u_{ll}
  \end{pmatrix}.\]
  Let $U_{\tau k}$ be the $\tau\times k$ submatrix formed by the first $\tau$ rows and the first $k$ columns of $U$. We assert that $U_{\tau k}$ is a ZLP matrix. If otherwise, there exists $\vec{\omega}=(\omega_1,\omega_3,\ldots,\omega_n)\in \mathds{V}(\I_{\tau}(U_{\tau k}))$. Let $\mathcal{C}_1$ be the set formed by all $\tau\times \tau$ minors of $U_{\tau k}$, and let $\mathcal{C}_2$ be the set formed by all $\tau\times \tau$ reduced minors of $A$. Then $\langle \mathcal{C}_1 \rangle_{\R_2} = \I_{\tau}(U_{\tau k})$ and $\langle \mathcal{C}_2 \rangle_{\R} = \J_{\tau}(A)$. Since $d_{\tau}(A) = g_1\cdots g_{\tau}\varphi_2^{s_1+\cdots+s_{\tau}}$, $\mathcal{C}_1$ is a part of $\mathcal{C}_2$. For any $h\in \mathcal{C}_2 \setminus \mathcal{C}_1$, it follows from $s_{\tau} < s$ that $\varphi_2\mid h$. Set $\omega_2 = f_2(\omega_1)$. Then $h(\omega_1,\omega_2,\omega_3,\ldots,\omega_n)=0$. This implies that \[(\omega_1,\omega_2,\omega_3,\ldots,\omega_n)\in\mathds{V}(\J_{\tau}(A)).\]
  This contradicts the fact that $\J_{\tau}(A) = \R$. Therefore, $U_{\tau k}$ is a ZLP matrix. According to the Quillen-Suslin theorem, there exists $V_{kk}\in \GL_{k}(\R_2)$ such that $U_{\tau k} V_{kk} = (\mathbf{I}_{\tau}, 0_{\tau\times (k-\tau)})$. By performing finitely many elementary row and column operations over $\R$ on $A\cdot \diag(V_{kk}, \mathbf{I}_{l-k})$, we obtain
  \[A \sim_{\R} \diag(g_1\varphi_2^{s_1},\ldots,g_{\tau}\varphi_2^{s_\tau}, g_{\tau+1}\varphi_2^{s},\ldots,g_{\tau+1}\varphi_2^{s})\cdot \diag(\mathbf{I}_{\tau},B) \triangleq A_1,\]
  where
  \[B = \begin{pmatrix} v_{\tau+1,\tau+1} & \cdots & v_{\tau+1,k} & \varphi_2v_{\tau+1,k+1} &  \cdots & \varphi_2v_{\tau+1,l} \\
  \frac{g_{\tau+2}}{g_{\tau+1}}v_{\tau+2,\tau+1} & \cdots & \frac{g_{\tau+2}}{g_{\tau+1}}v_{\tau+2,k} & \frac{g_{\tau+2}}{g_{\tau+1}}\varphi_2v_{\tau+2,k+1} &  \cdots & \frac{g_{\tau+2}}{g_{\tau+1}}\varphi_2v_{\tau+2,l} \\
  \vdots & \ddots & \vdots & \vdots & \ddots & \vdots \\
  \frac{g_{l}}{g_{\tau+1}}v_{l,\tau+1} & \cdots & \frac{g_{l}}{g_{\tau+1}}v_{l,k} & \frac{g_{l}}{g_{\tau+1}}\varphi_2v_{l,k+1} &  \cdots & \frac{g_{l}}{g_{\tau+1}}\varphi_2v_{ll}
  \end{pmatrix},\]
  $v_{ij}\in \R_2$ for $\tau+1\leq i,j \leq l$. By Lemma \ref{422-lemma-1}, we have $\J_i(B)=\R$ for $i=1,\ldots,l-\tau$, and
  \[d_i(B) = \begin{cases}
               \frac{g_{\tau+1}\cdots g_{\tau+i}}{g_{\tau+1}^i}, & i= 1, \ldots, k-\tau; \\
               \frac{g_{\tau+1}\cdots g_{\tau+i}\varphi_2^{i-(k-\tau)}}{g_{\tau+1}^i}, & i = k-\tau+1,\ldots,l-\tau.
             \end{cases}
  \]
  It follows that the Smith normal form of $B$ is
  \begin{equation*}\label{427-lemma-equ-10}
   S_B = \diag(1, \frac{g_{\tau+2}}{g_{\tau+1}},\ldots,\frac{g_{k}}{g_{\tau+1}} ,\frac{g_{k+1}}{g_{\tau+1}}\varphi_2, \ldots, \frac{g_{l}}{g_{\tau+1}} \varphi_2).
  \end{equation*}
  Let $V_{l-\tau}= (v_{ij})\in \M_{(l-\tau)\times (l-\tau)}(\R_2)$, where $\tau+1\leq i,j \leq l$. Then
  \begin{equation*}\label{427-lemma-equ-11}
   B = \diag(1, \frac{g_{\tau+2}}{g_{\tau+1}},\ldots,\frac{g_{k}}{g_{\tau+1}} ,\frac{g_{k+1}}{g_{\tau+1}}, \ldots, \frac{g_{l}}{g_{\tau+1}})\cdot V_{l-\tau} \cdot \diag(\underbrace{1,\ldots,1}_{k-\tau},\varphi_2,\ldots,\varphi_2).
  \end{equation*}
  It is easy to verify that $V_{l-\tau}\in \GL_{l-\tau}(\R_2)$. Adopting the proof technique from the \textbf{First Case}, we obtain $B\sim_{\R} S_B$. Therefore, $A\sim_{\R} S$.
 \end{proof}

 Before turning to the solution of Problem \ref{main-problem}, we state one further technical lemma that is essential to our main arguments.

 \begin{lemma}\label{424-coro-1}
  Let $F\in \M_{l\times l}(\R)$ with $\det(F) = \varphi_r^t$, and let $S_F = \diag(1,\ldots,1,\varphi_r^{s_{k+1}},\ldots,\varphi_r^{s_l})$ be the Smith normal form of $F$, where $k,r$ and $s_{k+1},\ldots,s_l,t$ are integers satisfying $1\leq k \leq l-1$, $2\leq r \leq n$, $1 \leq s_{k+1} \leq \cdots \leq s_l$ and $t = \sum_{j=k+1}^{l}s_j$. Suppose there exists $F_1\in \M_{l\times l}(\R)$ such that
  \[F\sim_{\R} \diag(\underbrace{1,\ldots,1}_k,\varphi_r,\ldots,\varphi_r) \cdot F_1.\]
  If $\J_i(F) = \R$ for $i=1,\ldots,l$, then
  \[F_1\sim_{\R} \diag(1,\ldots,1,\varphi_r^{s_{k+1}-1},\ldots,\varphi_r^{s_l-1}).\]
 \end{lemma}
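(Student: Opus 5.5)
Without loss of generality we replace $F$ by $\diag(\mathbf{I}_k,\varphi_r\mathbf{I}_{l-k})\cdot F_1$. Equivalence preserves the hypotheses, the Smith normal form and the conclusion, so this is harmless. The plan has three steps: compute the determinantal divisors of $F_1$, show its reduced minors generate $\R$, and then apply Lemma \ref{425-lemma-1}.

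\textbf{Step 1 (divisors).} Since $\det(F)=\varphi_r^{l-k}\det(F_1)$, we get $\det(F_1)=\varphi_r^{t-(l-k)}$, up to a unit. Set $D=\diag(\mathbf{I}_k,\varphi_r\mathbf{I}_{l-k})$. This matrix has the same shape as in Lemma \ref{420-coro-1} with $s_{1r}=\cdots=s_{kr}=0$ and $s=1\le s_{k+1}$. Two cases arise.

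\begin{itemize}
\item If $s_{k+1}>1$, Lemma \ref{420-coro-1} applies directly. It gives $d_i(F_1)=1$ and $\J_i(F_1)=\R$ for $i\le k$, and $\rank(F_1|_{x_r=f_r})=k$.
\item If $s_{k+1}=1$, the first-part argument of Lemma \ref{419-lemma-1} does not need $s<s_{k+1}$. It still gives $d_i(F_1)=1$ and $\J_i(F_1)=\R$ for $i\le k$.
\end{itemize}

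For the higher divisors, take $i=k+j$ with $j\ge1$. Every $i\times i$ minor of $DF_1$ is $\varphi_r^{m}$ times a minor of $F_1$, where $m\ge j$ counts the chosen rows beyond $k$. Cauchy--Binet (or a direct row-choice count) then gives
\[
d_{k+j}(F)=\varphi_r^{s_{k+1}+\cdots+s_{k+j}}.
\]
From this, the candidate Smith form of $F_1$ is
\[
\diag(1,\ldots,1,\varphi_r^{s_{k+1}-1},\ldots,\varphi_r^{s_l-1}).
\]

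\textbf{Step 2 (reduced minors), the main obstacle.} We must show $d_i(F_1)=\varphi_r^{\sum_{j\le i}(s_j-1)^+}$ and $\J_i(F_1)=\R$ for all $i>k$. Divisor calculations alone only bound $d_i(F_1)$ from one side, so I would avoid them and factor $F_1$ through a unimodular matrix instead.

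First I would apply Lemma \ref{424-lemma-2} to $F$, using the rank condition from Step 1 (in the case $s_{k+1}=1$ one gets it the same way as the first case of Lemma \ref{419-lemma-1}). This gives $F=V\cdot D\cdot H$ with $V\in\GL_l(\R_r)$. Then I would run the inductive procedure of Lemma \ref{423-lemma-1}, in its $\varphi_r$-version Lemma \ref{425-lemma-3}, using Lemma \ref{502-lemma-1} at each stage. Since $\det(F)$ is a pure power of $\varphi_r$, this procedure yields $F\sim_{\R} S_F$ with all transformations explicit. Combined with Lemma \ref{425-lemma-1}, we therefore know $F=PS_FQ$ with $P,Q$ unimodular.

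Next, from $DF_1=PS_FQ$ we get $F_1=D^{-1}PS_FQ$. We need $D^{-1}PS_F$ to be polynomial and equivalent to $S_F$ with one power of $\varphi_r$ removed from the last $l-k$ diagonal entries. Put $x_r=f_r$ and write $\bar P$, $\bar F_1$ for the specialisations. Over $\R/\langle\varphi_r\rangle\cong\R_r$, the rows $k+1,\ldots,l$ of $\bar P\,\bar S_F$ vanish, because $\varphi_r\mid DF_1$ in those rows. Also $\bar S_F=\diag(\mathbf{I}_k,0)$. So $\bar P$ has its lower-left $(l-k)\times k$ block zero. Hence $\bar P$ is block upper triangular and unimodular over $\R_r$.

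\textbf{Step 3 (conclusion).} I would then try to write $P=P_0+\varphi_r P'$, with $P_0$ block upper triangular over $\R_r$, and conjugate:
\[
D^{-1}P_0D=\text{block upper triangular with the top-right block multiplied by }\varphi_r,
\]
which is unimodular. The correction term $D^{-1}\varphi_r P' S_F$ is polynomial. After moving it to the right, it is absorbed because $S_F$ has enough $\varphi_r$-divisibility beyond the first $k$ entries; the remaining issue is the first $k$ columns, which we handle with the first-$k$-column reduction from the First Case of Lemma \ref{425-lemma-2}. This should give $F_1\sim_{\R}\diag(\mathbf{I}_k,\varphi_r^{s_{k+1}-1},\ldots)$. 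If that absorption step fails, the fallback is to verify $\J_i(F_1)=\R$ pointwise. Off the hypersurface $\varphi_r=0$, $D$ is invertible, so it does not matter there. On the hypersurface, we would use the rank data together with $\J_i(F)=\R$. Once $\J_i(F_1)=\R$ holds for all $i$, Lemma \ref{425-lemma-1} finishes the proof.
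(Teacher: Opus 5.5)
\textbf{Gap in Step 3.} The final step is not actually carried out. After splitting $P=P_0+\varphi_rP'$, you only show that $D^{-1}P_0D$ is unimodular. The claim that the correction term $\varphi_rD^{-1}P'S_F$ is ``absorbed'' is not an argument. Its lower-left block is $P'_{21}$, which is generally not divisible by $\varphi_r$, so it changes the first $k$ columns modulo $\varphi_r$. That is exactly the ``remaining issue'' you then defer to the First Case of Lemma \ref{425-lemma-2}. That lemma is set up for a matrix $\diag(g_1,\ldots,g_l)\cdot U\cdot\diag(\mathbf{I}_k,\varphi_2\mathbf{I}_{l-k})$ with $U\in\GL_l(\R_2)$ and univariate $g_i$, which is not your situation. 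The pointwise fallback (``use the rank data on the hypersurface'') is only a sketch of precisely the hard part. As written, the proposal does not establish $F_1\sim_{\R}\diag(1,\ldots,1,\varphi_r^{s_{k+1}-1},\ldots,\varphi_r^{s_l-1})$.

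\textbf{The fix.} You do not need to split $P$ at all. Let $D=\diag(\mathbf{I}_k,\varphi_r\mathbf{I}_{l-k})$. Lemma \ref{425-lemma-1} gives $DF_1=PS_FQ$ with $P,Q$ unimodular, since $\det(F)=\varphi_r^t$ with $t\ge l-k\ge 1$. You correctly showed that $\varphi_r$ divides the lower-left $(l-k)\times k$ block $P_{21}$ of $P$. Hence $D^{-1}PD$, whose blocks are $P_{11}$, $\varphi_rP_{12}$, $\varphi_r^{-1}P_{21}$ and $P_{22}$, is a polynomial matrix. Its determinant is $\det(D^{-1}PD)=\det(P)\in\K\setminus\{0\}$, so $D^{-1}PD\in\GL_l(\R)$. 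Then
\[
F_1=D^{-1}PS_FQ=(D^{-1}PD)\,(D^{-1}S_F)\,Q ,
\]
and $D^{-1}S_F=\diag(1,\ldots,1,\varphi_r^{s_{k+1}-1},\ldots,\varphi_r^{s_l-1})$ is polynomial because every $s_j\ge 1$. This finishes the proof.

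This also makes several parts of your proposal unnecessary: all of Step 1, the case split via Lemma \ref{420-coro-1}, the appeal to Lemma \ref{424-lemma-2}, and the rerun of the procedure of Lemma \ref{423-lemma-1}. That is just as well, because your Step 1 assertion that the divisors of $F$ determine the Smith form of $F_1$ was not justified.

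Once repaired, your route differs from the paper's omitted proof. The paper peels off one power of $\varphi_r$ at a time by repeated use of Lemmas \ref{424-lemma-2}, \ref{502-lemma-1} and \ref{420-coro-1}. The conjugation argument needs only Lemma \ref{425-lemma-1} applied to $F$.
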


 The proof of Lemma \ref{424-coro-1} proceeds in a strictly analogous manner to the procedure employed in Lemma \ref{423-lemma-1} for extracting the Smith normal form of $F$ w.r.t. $p_1$, and relies exclusively on repeated applications of Lemmas \ref{424-lemma-2}, \ref{502-lemma-1} and \ref{420-coro-1}. For brevity, we omit the detailed argument here.

 Drawing on the preceding Lemmas \ref{425-lemma-2} and \ref{424-coro-1}, which are central to the proof, we now propose Theorem \ref{425-Theorem-1}. This theorem addresses the special case of Problem \ref{main-problem}.

 \begin{theorem}\label{425-Theorem-1}
  Let $F\in \M_{l\times l}(\R)$ with $\det(F) = f_1 \varphi_2^t$, where $f_1\in \K[x_1]$ and $t$ is a nonnegative integer. Then $F$ is equivalent to its Smith normal form if and only if $\J_i(F) = \R$ for $i=1,\ldots,l$.
 \end{theorem}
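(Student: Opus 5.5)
Necessity is immediate: if $F\sim_{\R} S_F$, then Proposition \ref{lemma-reduced-2} gives $\J_i(F)=\J_i(S_F)$, and every ideal $\J_i(S_F)$ is $\R$ because the $i\times i$ minor $h_1\cdots h_i$ of the diagonal matrix equals $d_i(S_F)$, so $1$ is a reduced minor. For sufficiency, assume $\J_i(F)=\R$ for all $i$. The cases $t=0$ and $f_1\in\K$ are Lemmas \ref{415-lemma-1} and \ref{425-lemma-1}, so assume $f_1\notin\K$ and $t\geq 1$.

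The plan is to split off the $\varphi_2$-part. Since $\gcd(f_1,\varphi_2^t)=1$, Lemma \ref{425-lemma-3} gives $F=F_1F_2$ with $\det(F_2)=\varphi_2^t$ and $\det(F_1)$ a nonzero constant multiple of $f_1$. Lemma \ref{Lu-2026-lemma-LAA} then yields $\J_i(F_1)=\J_i(F_2)=\R$ and $d_i(F)=d_i(F_1)d_i(F_2)$. Lemma \ref{415-lemma-1} gives $F_1\sim_{\R}\diag(g_1,\ldots,g_l)$ with $g_1\mid\cdots\mid g_l$ in $\K[x_1]$, and Lemma \ref{425-lemma-1} gives $F_2\sim_{\R}\diag(\varphi_2^{s_1},\ldots,\varphi_2^{s_l})$ with $0\leq s_1\leq\cdots\leq s_l$. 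Absorbing the unimodular factors, we get
\[F\sim_{\R}\diag(g_1,\ldots,g_l)\cdot W\cdot \diag(\varphi_2^{s_1},\ldots,\varphi_2^{s_l})\]
for some $W\in\GL_l(\R)$. The target is $S_F=\diag(g_1\varphi_2^{s_1},\ldots,g_l\varphi_2^{s_l})$.

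The main obstacle is that $W$ has entries in $\R$, not in $\R_2$, so Lemmas \ref{414-lem-1} and \ref{425-lemma-2} do not apply directly. To get around this, I would not diagonalize $F_2$ at once. Instead I would peel off $\varphi_2$ one step at a time, following the proof of Lemma \ref{423-lemma-1} and reusing the argument of Lemma \ref{424-coro-1}, with Lemma \ref{424-lemma-2} supplying the needed $U\in\GL_l(\R_2)$.

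Concretely, I would write $F\sim_{\R}\diag(g_1,\ldots,g_l)\,F_2$. Let $k$ be the number of zero exponents among the $s_j$. Lemma \ref{420-coro-1} shows that $\rank(F_2(x_1,f_2,x_3,\ldots,x_n))=k$, and Lemma \ref{424-lemma-2} then factors
\[F_2=V\,\diag(\underbrace{1,\ldots,1}_{k},\varphi_2,\ldots,\varphi_2)\,H, \qquad V\in\GL_l(\R_2).\]
The matrix $A=\diag(g_1,\ldots,g_l)\,V\,\diag(1,\ldots,1,\varphi_2,\ldots,\varphi_2)$ satisfies $F=AH$ with $\gcd(\det A,\det H)=1$ after the coprime part is taken into account. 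Lemma \ref{Lu-2026-lemma-LAA} should give $\J_i(A)=\R$, so Lemma \ref{425-lemma-2} (with all $s_j=0$, $s=0$) gives $A\sim_{\R}\diag(g_1,\ldots,g_k,g_{k+1}\varphi_2,\ldots,g_l\varphi_2)$.

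I would then replace $F$ by $\diag(g_1,\ldots,g_k,g_{k+1}\varphi_2,\ldots,g_l\varphi_2)\cdot H'$. Lemma \ref{424-coro-1} gives $H'$ the Smith form $\diag(1,\ldots,1,\varphi_2^{s_{k+1}-1},\ldots)$. At each later stage the left factor has the shape $\diag(g_1\varphi_2^{s_1},\ldots,g_k\varphi_2^{s_k},g_{k+1}\varphi_2^{s},\ldots,g_l\varphi_2^{s})$. Another application of Lemma \ref{424-lemma-2} produces a $V\in\GL_l(\R_2)$, and Lemma \ref{425-lemma-2} merges the result back into diagonal form, raising the exponent $s$ by one. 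After $s_l$ steps the right factor is unimodular, so $F\sim_{\R}S_F$.

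The delicate point in each stage is checking that the hypothesis $\J_i(A)=\R$ of Lemma \ref{425-lemma-2} holds for the intermediate matrix. I would verify it exactly as in the first part of Lemma \ref{419-lemma-1}, using Cauchy--Binet together with the fact that $d_i$ is multiplicative across the factorization.
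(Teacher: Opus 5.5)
Your plan follows the paper's proof. You split $F$ into a left factor of determinant $f_1$ and a right factor of determinant $\varphi_2^t$, diagonalize both, and then peel off $\varphi_2$ one layer at a time using Lemmas \ref{424-lemma-2}, \ref{424-coro-1}, \ref{414-lem-1} and \ref{425-lemma-2}. The only difference is that you get the initial split from Lemma \ref{425-lemma-3}, as the paper does in Theorem \ref{425-Theorem-2}, rather than from Lemma \ref{423-lemma-1}. Both give the same shape, so this is harmless.

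One step as written is wrong: you cannot obtain $\J_i(A)=\R$ from Lemma \ref{Lu-2026-lemma-LAA}. Up to units, $\det(A)=f_1\varphi_2^{l-k}$ and $\det(H)=\varphi_2^{t-(l-k)}$. These share the factor $\varphi_2$ whenever some $s_j\geq 2$, so the coprimality hypothesis fails, and \emph{after the coprime part is taken into account} does not repair it.

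The correct justification is the one you point to in your last paragraph, and it is exactly what the paper does:
\begin{itemize}
\item Lemma \ref{414-lem-1} gives $S_A$, and Lemma \ref{424-coro-1} gives the Smith form of $H$.
\item Comparing exponents then shows $d_i(A)\,d_i(H)=d_i(F)$.
\item By Cauchy--Binet, every $i\times i$ reduced minor of $AH$ is an $\R$-combination of products of a reduced minor of $A$ with a reduced minor of $H$. Hence $\R=\J_i(F)\subseteq\J_i(A)$.
\end{itemize}
This is a direct divisibility argument, not a rerun of Lemma \ref{419-lemma-1}.

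There is also a small edge case. If $s_1>0$, your initial $k$ is $0$, which lies outside the range $1\leq k\leq l-1$ of Lemma \ref{424-lemma-2}. Handle it as the paper does, by first pulling the scalar $\varphi_2^{s_1}$ out of the right factor. After that, $k\geq 1$ at every stage.
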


 \begin{proof}
  Without loss of generality, assume that the Smith normal form of $F$ is
  \[S_F = \diag(g_1\varphi_2^{s_1},g_2\varphi_2^{s_2},\ldots,g_l\varphi_2^{s_l}),\]
  where $f_1 = \prod_{i=1}^{l}g_i$ and $g_1\mid \cdots \mid g_l$, $t=\sum_{i=1}^{l}s_i$ and $0\leq s_1\leq \cdots\leq s_l$.

  The necessity is obvious from Proposition \ref{lemma-reduced-2}. It suffices to prove the sufficiency. According to Lemma \ref{423-lemma-1}, there exist $G,H\in \M_{l\times l}(\R)$ such that
  \begin{equation*}\label{427-theorm-equ-1}
   F=GH ~ \text{and} ~ \det(G) = f_1.
  \end{equation*}
  Since $\gcd(\det(G),\det(H))=1$, it follows from Lemma \ref{Lu-2026-lemma-LAA} that
  \begin{equation*}\label{427-theorm-equ-2}
   d_i(F) = d_i(G)\cdot d_i(H) ~ \text{and} ~ \J_i(G) = \J_i(H) = \R ~ \text{for} ~ i=1,\ldots,l.
  \end{equation*}
  Based on Lemmas \ref{415-lemma-1} and \ref{425-lemma-1}, we have
  \begin{equation*}\label{427-theorm-equ-3}
   G\sim_{\R} S_G = \diag(g_1,g_2,\ldots,g_l) ~ \text{and} ~ H\sim_{\R} S_H = \diag(\varphi_2^{s_1},\varphi_2^{s_2},\ldots,\varphi_2^{s_l}).
  \end{equation*}
  Since $G\sim_{\R} S_G$, there are $U_1,V_1\in \GL_l(\R)$ such that $G = U_1S_GV_1$. Set $H_1 = V_1H$. Then $H_1\sim_{\R} H$. Let $H_2\in \M_{l\times l}(\R)$ satisfy $H_1 = \varphi_2^{s_1}\cdot H_2$. Clearly,
  \begin{equation*}\label{427-theorm-equ-4}
   F\sim_{\R} \diag(g_1\varphi_2^{s_1},g_2\varphi_2^{s_1},\ldots,g_l\varphi_2^{s_1})\cdot H_2 ~ \text{and} ~ H_2\sim_{\R} S_{H_2} = \diag(1,\varphi_2^{s_2-s_1},\ldots,\varphi_2^{s_l-s_1}).
  \end{equation*}
  If $s_1=s_2$, then we consider the order relation between $s_2$ and $s_3$. Otherwise, it follows from $H_2\sim_{\R} S_{H_2}$ that $\J_1(H_2) = \R$ and $\rank(H_2(x_1,f_2,x_3,\ldots,x_n))=1$. By Lemma \ref{424-lemma-2}, there exist $U_2\in \GL_l(\R_2)$ and $H_3\in \M_{l\times l}(\R)$ such that
  \begin{equation*}\label{427-theorm-equ-5}
   H_2 = U_2\cdot \diag(1,\varphi_2,\ldots,\varphi_2) \cdot H_3.
  \end{equation*}
  Using Lemma \ref{424-coro-1}, we have
  \begin{equation}\label{427-theorm-equ-6}
   H_3\sim_{\R} S_{H_3} = \diag(1,\varphi_2^{s_2-s_1-1},\ldots,\varphi_2^{s_l-s_1-1}).
  \end{equation}
  Let
  \begin{equation*}\label{427-theorm-equ-7}
   A = \diag(g_1\varphi_2^{s_1},g_2\varphi_2^{s_1},\ldots,
  g_l\varphi_2^{s_1})\cdot U_2 \cdot \diag(1,\varphi_2,\ldots,\varphi_2).
  \end{equation*}
  Then $F\sim_{\R} A H_3$. Lemma \ref{414-lem-1} implies that the Smith normal form of $A$ is
  \begin{equation}\label{427-theorm-equ-8}
   S_A = \diag(g_1\varphi_2^{s_1},g_2\varphi_2^{s_1+1},\ldots,
  g_l\varphi_2^{s_1+1}).
  \end{equation}
  It follows Equations \eqref{427-theorm-equ-6} and \eqref{427-theorm-equ-8} that $d_i(F) = d_i(A) \cdot d_i(H_3)$ for $i=1,\ldots,l$. According to the Cauchy-Binet formula, $\J_i(A)=\R$ for $i=1,\ldots,l$. By Lemma \ref{425-lemma-2}, $A\sim_{\R} S_A$. Then there exist $U_3,V_3\in \GL_l(\R)$ such that $A=U_3S_AV_3$. Set $H_4 = V_3H_3$. Then
  \begin{equation*}\label{427-theorm-equ-9}
   F\sim_{\R} S_A \cdot H_4 ~ \text{and} ~ H_4 \sim_{\R} S_{H_4} = \diag(1,\varphi_2^{s_2-s_1-1},\ldots,\varphi_2^{s_l-s_1-1}).
  \end{equation*}
  By performing the above computational process a total of $s_2-s_1$ times, we deduce that
  \begin{equation*}\label{427-theorm-equ-10}
   F\sim_{\R} \diag(g_1\varphi_2^{s_1},g_2\varphi_2^{s_2},g_3\varphi_2^{s_2},
    \ldots,g_l\varphi_2^{s_2})\cdot F_{N_{s1}},
  \end{equation*}
  where $ F_{N_{s1}}\in \M_{l\times l}(\R)$ satisfies
  \begin{equation*}\label{427-theorm-equ-11}
   F_{N_{s1}} \sim_{\R} S_{F_{N_{s1}}} = \diag(1,1,\varphi_2^{s_3-s_2},\ldots,\varphi_2^{s_l-s_2}).
  \end{equation*}
  Applying the same argument as above, we successively compare the order relations between $s_i$ and $s_{i+1}$ for each $i=2,\ldots,l-1$, and conclude that
  \begin{equation*}\label{427-theorm-equ-12}
   F \sim_{\R} S_F = \diag(g_1\varphi_2^{s_1},g_2\varphi_2^{s_2},\ldots,g_l\varphi_2^{s_l}).
  \end{equation*}
 \end{proof}

 Having laid all the necessary groundwork, we now present the main result of this paper, which resolves Problem \ref{main-problem} in full generality.

 \begin{theorem}\label{425-Theorem-2}
  Let $F\in \M_{l\times l}(\R)$ with $\det(F) = f_1 \varphi_2^{t_2} \varphi_3^{t_3} \cdots \varphi_n^{t_n}$, where $f_1\in \K[x_1]$ and $t_2,\ldots,t_n$ are nonnegative integers. Then $F$ is equivalent to its Smith normal form if and only if $\J_i(F) = \R$ for $i=1,\ldots,l$.
 \end{theorem}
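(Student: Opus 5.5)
Necessity follows from Proposition \ref{lemma-reduced-2}, exactly as in Theorem \ref{425-Theorem-1}. For sufficiency I would induct on the number $n$ of variables that actually occur with positive exponent. More precisely, I would induct on the largest index $m$ with $t_m>0$. The base case $m\le 2$ is Theorem \ref{425-Theorem-1}, and the case where all $t_i=0$ is Lemma \ref{415-lemma-1}.

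For the inductive step, assume $\J_i(F)=\R$ for all $i$ and set $r=m$. The factors $f_1,\varphi_2,\ldots,\varphi_{m-1}$ do not involve $x_m$. Write $\det(F)=f\varphi_m^{t_m}$ with $f=f_1\varphi_2^{t_2}\cdots\varphi_{m-1}^{t_{m-1}}$. Since $\varphi_m=x_m-f_m(x_1,\ldots,x_{m-1})$ is irreducible and does not divide $f$, we have $\gcd(f,\varphi_m)=1$. Lemma \ref{425-lemma-3} then gives $F=F_1F_2$ with $\det(F_2)=\varphi_m^{t_m}$ and $\det(F_1)=f$. By Lemma \ref{Lu-2026-lemma-LAA}, $d_i(F)=d_i(F_1)d_i(F_2)$ and $\J_i(F_1)=\J_i(F_2)=\R$.

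Lemma \ref{425-lemma-1} gives $F_2\sim_\R\diag(\varphi_m^{s_1},\ldots,\varphi_m^{s_l})$. The induction hypothesis applies to $F_1$: its determinant has the same shape with largest index $<m$. Here the induction really runs over the class of matrices over $\R$ whose determinant only involves $f_1,\varphi_2,\ldots,\varphi_{m-1}$, so the ambient ring is unchanged. We obtain $F_1\sim_\R \diag(h_1,\ldots,h_l)$ with $h_1\mid\cdots\mid h_l$ and $h_i$ products of $f_1$-factors and $\varphi_2,\ldots,\varphi_{m-1}$. Absorbing the unimodular matrices, $F\sim_\R \diag(h_1,\ldots,h_l)\,H$ with $H\sim_\R\diag(\varphi_m^{s_1},\ldots,\varphi_m^{s_l})$.

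From here I would copy the stepwise procedure in the proof of Theorem \ref{425-Theorem-1}, with $\varphi_2$ replaced by $\varphi_m$ and the $g_i\in\K[x_1]$ replaced by the $h_i$. First factor out $\varphi_m^{s_1}$. Whenever $s_j<s_{j+1}$, use Lemma \ref{424-lemma-2} to write the remaining factor as $U\cdot\diag(1,\ldots,1,\varphi_m,\ldots,\varphi_m)\cdot H'$ with $U\in\GL_l(\R_m)$, and Lemma \ref{424-coro-1} to control the Smith form of $H'$. Then prove that $A=\diag(h_1\varphi_m^{s_1},\ldots,h_j\varphi_m^{s_j},h_{j+1}\varphi_m^{s},\ldots)\,U\,\diag(1,\ldots,1,\varphi_m,\ldots,\varphi_m)$ is equivalent to its Smith form. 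Lemma \ref{414-lem-1} identifies that Smith form, and the Cauchy--Binet formula gives $\J_i(A)=\R$. Iterating yields $F\sim_\R S_F$.

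The main obstacle is the analogue of Lemma \ref{425-lemma-2} with $\varphi_2$ replaced by $\varphi_m$ and $g_i\in\K[x_1]$ replaced by $h_i\in\K[x_1,\ldots,x_{m-1}]\subseteq\R_m$. I would redo its three cases. The variety argument (substituting $\omega_m=f_m(\omega_1,\ldots,\omega_{m-1})$) still shows that the relevant submatrices of $U$ are ZLP or unimodular over $\R_m$, and then the Quillen--Suslin theorem applies over $\R_m$. The delicate point is the places where Lemma \ref{415-lemma-1} was used: for the submatrix $B$ over $\R_m$, and for the residual blocks $V$, $B$ over $\R$. There I would instead invoke the induction hypothesis: the theorem itself in $n-1$ variables over $\R_m$ for $B$, whose determinantal divisors are products of the $h_i$, and the theorem over $\R$ with largest index $<m$ for the residual blocks. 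I would check that the divisibility $h_{i_0}\mid v_{i_0,j}$ and the quotients $h_{j}/h_{k+1}$ behave as before, which only uses $h_1\mid\cdots\mid h_l$ and $\gcd(h_i,\varphi_m)=1$.
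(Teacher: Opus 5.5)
Your proposal follows the paper's proof essentially step for step: induction on the largest index with $t_m>0$, splitting off $\varphi_m^{t_m}$ via Lemma~\ref{425-lemma-3} and Lemma~\ref{Lu-2026-lemma-LAA}, then the stepwise procedure of Theorem~\ref{425-Theorem-1} driven by an analogue of Lemma~\ref{425-lemma-2}. That analogue is the paper's auxiliary Claim, proved in the appendix, in which the induction hypothesis replaces Lemma~\ref{415-lemma-1}, including over $\R_m$. Two details need to be made explicit, and the paper supplies both: since you invoke the hypothesis over $\R_m$, the induction on $m$ must be formulated for polynomial rings in any number ($\geq 2$) of variables rather than with the ambient ring fixed as you wrote; and since the column block $B$ is $l\times k$, you must first apply the Lin--Bose lemma and the Quillen--Suslin theorem over $\R_m$ to reduce it to a square $k\times k$ factor before the (square) induction hypothesis applies.
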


 \begin{proof}
  The necessity is obvious from Proposition \ref{lemma-reduced-2}. It suffices to prove the sufficiency. We proceed by induction on $k$, where the determinant of $F$ has the form $\det(f) = f_1 \varphi_2^{t_2} \varphi_3^{t_3} \cdots \varphi_k^{t_k}$.

  The statement is true for $k=2$ by Theorem \ref{425-Theorem-1}. Assume that the statement holds for all $k < r$, where $r$ is an integer with $3 \leq r \leq n$. For $k = r$, we first make the following claim.

  {\small{\em \textbf{Claim.} Let $A\in \M_{l\times l}(\R)$, and let $h_1,\ldots,h_l\in \K[x_1,\ldots,x_{r-1}]$ satisfy $\prod_{i=1}^{l}h_i = f_1 \varphi_2^{t_2} \varphi_3^{t_3} \cdots \varphi_{r-1}^{t_{r-1}}$ and $h_1\mid \cdots \mid h_l$. Suppose there exist an integer $\theta$ with $1\leq \theta \leq l-1$ and $U_\theta\in \GL_l(\R_r)$ such that
  \[A = \diag(h_1\varphi_r^{s_1},\ldots,h_\theta\varphi_r^{s_\theta},h_{\theta+1}\varphi_r^{s},
  \ldots,h_l\varphi_r^{s}) \cdot U_\theta \cdot \diag(\underbrace{1,\ldots,1}_\theta,\varphi_r,\ldots,\varphi_r),\]
  where $s_1,\ldots,s_\theta$ and $s$ are integers satisfying $0\leq s_1\leq \cdots \leq s_\theta \leq s$. If $\J_i(A) = \R$ for $i=1,\ldots,l$, then
  \[A\sim_{\R} \diag(h_1\varphi_r^{s_1},\ldots,h_\theta\varphi_r^{s_\theta},h_{\theta+1}\varphi_r^{s+1},
  \ldots,h_l\varphi_r^{s+1}).\]}}

  Using this claim, we then proceed to prove that $F$ satisfying $\det(F) = f_1 \varphi_2^{t_2} \varphi_3^{t_3} \cdots \varphi_r^{t_r}$ is equivalent to its Smith normal form.

  According to Lemma \ref{425-lemma-3}, there are $F_1,F_2\in \M_{l\times l}(\R)$ such that
  \begin{equation}\label{427-equ-1}
   F = F_1F_2 ~ \text{and} ~ \det(F_2) = \varphi_r^{t_r}.
  \end{equation}
  Since $\det(F) = \det(F_1)\cdot \det(F_2)$, $\det(F_1) = f_1 \varphi_2^{t_2} \varphi_3^{t_3} \cdots \varphi_{r-1}^{t_{r-1}}$. Clearly, $\gcd(\det(F_1),\det(F_2))=1$. It follows from Lemma \ref{Lu-2026-lemma-LAA} that
  \[d_i(F) = d_i(F_1)\cdot d_i(F_2) ~ \text{and} ~ \J_i(F_1) = \J_i(F_2) = \R ~ \text{for} ~ i=1,\ldots,l.\]
  Let
  \[S_{F_1} = \diag(h_1,\ldots,h_l)\]
  be the Smith normal form of $F_1$, where $h_1,\ldots,h_l\in \K[x_1,\ldots,x_{r-1}]$ satisfy $h_1\mid\cdots\mid h_l$ and $\det(F_1) = h_1\cdots h_l$. By the induction hypothesis,
  \begin{equation*}\label{427-equ-2}
   F_1\sim_{\R} S_{F_1}.
  \end{equation*}
  Then there are $U_1,V_1\in \GL_l(\R)$ such that
  \begin{equation}\label{427-equ-3}
   F_1 = U_1S_{F_1}V_1.
  \end{equation}
  Assume that
  \[S_{F_2} = \diag(\varphi_r^{t_{1r}},\ldots,\varphi_r^{t_{lr}})\]
  is the Smith normal form of $F_2$, where $t_{1r},\ldots,t_{lr}$ are integers satisfying $0\leq t_{1r} \leq \cdots \leq t_{lr}$ and $t_r = t_{1r}+\cdots+t_{lr}$. By Lemma \ref{425-lemma-1},
  \[F_2\sim_{\R} S_{F_2}.\]
  It follows from $d_i(F) = d_i(F_1)\cdot d_i(F_2)$ for $i=1,\ldots,l$ that the Smith normal form of $F$ is
  \[S_F = \diag(h_1\varphi_r^{t_{1r}},h_2\varphi_r^{t_{2r}},
    \ldots,h_l\varphi_r^{t_{lr}}).\]
  Let $F_{21}\in \M_{l\times l}(\R)$ satisfy
  \begin{equation}\label{427-equ-4}
   V_1F_2=\varphi_r^{t_{1r}} \cdot F_{21}.
  \end{equation}
  Then $\J_i(F_{21}) = \R$ for $i=1,\ldots, l$, and the Smith normal form of $F_{21}$ is
  \[S_{F_{21}} = \diag(1,\varphi_r^{t_{2r}-t_{1r}},\ldots,\varphi_r^{t_{lr}-t_{1r}}).\]
  If $t_{1r} = t_{2r}$, then we consider the order relation between $t_{2r}$ and $t_{3r}$. Otherwise, it follows from $F_{21}\sim_{\R} S_{F_{21}}$ that $\rank(F_{21}(x_1,\ldots,x_{r-1},f_r,x_{r+1}, \ldots,x_n))=1$. Based on Lemma \ref{424-lemma-2}, there are $U_{21}\in \GL_l(\R_r)$ and $F_{22}\in \M_{l\times l}(\R)$ such that
  \begin{equation}\label{427-equ-5}
   F_{21} = U_{21} \cdot \diag(1,\varphi_r,\ldots,\varphi_r) \cdot F_{22}.
  \end{equation}
  Using Lemma \ref{424-coro-1}, we have
  \begin{equation}\label{427-equ-6}
   F_{22}\sim_{\R} \diag(1,\varphi_r^{t_{2r}-t_{1r}-1},\ldots,\varphi_r^{t_{lr}-t_{1r}-1}) \triangleq S_{F_{22}}.
  \end{equation}
  Let
  \begin{equation}\label{427-equ-7}
   A = \diag(h_1\varphi_r^{t_{1r}},h_2\varphi_r^{t_{1r}},\ldots,
  h_l\varphi_r^{t_{1r}})\cdot U_{21} \cdot \diag(1,\varphi_r,\ldots,\varphi_r).
  \end{equation}
  Combining Equations \eqref{427-equ-1}--\eqref{427-equ-5}, we obtain
  \begin{equation*}\label{427-equ-8}
   F\sim_{\R} AF_{22}.
  \end{equation*}
  Lemma \ref{414-lem-1} implies that the Smith normal form of $A$ is
  \begin{equation}\label{427-equ-9}
   S_A = \diag(h_1\varphi_r^{t_{1r}},h_2\varphi_r^{t_{1r}+1},\ldots,
  h_l\varphi_r^{t_{1r}+1}).
  \end{equation}
  It follows from Equations \eqref{427-equ-6} and \eqref{427-equ-9} that $d_i(F) = d_i(A)\cdot d_i(F_{22})$ for $i=1,\ldots,l$. According to the Cauchy-Binet formula, $\J_i(A)=\R$ for $i=1,\ldots,l$. By the Claim, $A\sim_{\R} S_A$. Then there are $U_3,V_3\in \GL_l(\R)$ such that $A = U_3S_AV_3$. Set $F_{23} = V_3F_{22}$. Then
  \begin{equation*}\label{427-equ-16}
   F\sim_{\R} S_A F_{23} ~ \text{and} ~ F_{23} \sim_{\R} S_{F_{22}}.
  \end{equation*}
  By performing the above computational process a total of $t_{2r}-t_{1r}$ times, we deduce that
  \begin{equation*}\label{427-equ-17}
   F\sim_{\R} \diag(h_1\varphi_r^{t_{1r}},h_2\varphi_r^{t_{2r}},h_3\varphi_r^{t_{2r}},
    \ldots,h_l\varphi_r^{t_{2r}})\cdot F_{N_{r1}},
  \end{equation*}
  where $ F_{N_{r1}}\in \M_{l\times l}(\R)$ satisfies
  \begin{equation*}\label{427-equ-18}
   F_{N_{r1}} \sim_{\R} S_{F_{N_{r1}}} = \diag(1,1,\varphi_r^{t_{3r}-t_{2r}},\ldots,\varphi_r^{t_{lr}-t_{2r}}).
  \end{equation*}
  Applying the same argument as above, we successively compare the order relations between $t_{jr}$ and $t_{(j+1)r}$ for each $j=2,\ldots,l-1$, and conclude that
  \begin{equation*}\label{427-equ-19}
   F \sim_{\R} S_{F} = \diag(h_1\varphi_r^{t_{1r}},h_2\varphi_r^{t_{2r}},
    \ldots,h_l\varphi_r^{t_{lr}}).
  \end{equation*}
 \end{proof}

 \begin{remark}
  The proofs of Theorems \ref{425-Theorem-1} and \ref{425-Theorem-2} proceed along largely parallel lines. The principal difference is that Theorem \ref{425-Theorem-1} depends crucially on Lemma \ref{425-lemma-2}, whereas the proof of Theorem \ref{425-Theorem-2} proceeds by mathematical induction in conjunction with an auxiliary claim. While this claim appears nearly identical to Lemma \ref{425-lemma-2} at first glance, its proof is inextricably tied to the inductive step and cannot be established as a standalone result. For the sake of completeness, the complete detailed proof is deferred to \ref{sec:appendix}.
 \end{remark}

\section{Generalizations}\label{sec_general}

 Having established the main theorem (Theorem \ref{425-Theorem-2}) for full-rank square matrices over $\R$ in Section \ref{sec_main-results}, we now extend our results to rank-deficient and non-square matrices using the Quillen-Suslin theorem and the Lin-Bose lemma. We also consider the setting of automorphisms, thereby extending our analysis to a much wider class of matrices.

 \begin{theorem}\label{425-Theorem-3}
  Let $F\in \M_{l\times m}(\R)$ with rank $\gamma$, and $d_\gamma(F) = f_1 \varphi_2^{t_2} \varphi_3^{t_3} \cdots \varphi_n^{t_n}$, where $1\leq \gamma\leq \min\{l,m\}$, $f_1\in \K[x_1]$ and $t_2,\ldots,t_n$ are nonnegative integers. Then $F$ is equivalent to its Smith normal form if and only if $\J_i(F) = \R$ for $i=1,\ldots,\gamma$.
 \end{theorem}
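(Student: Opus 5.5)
Necessity follows from Proposition \ref{lemma-reduced-2}: the Smith normal form $S_F$ has each order-$i$ reduced minor ideal equal to $\R$ because the leading $i\times i$ minor divided by $d_i(F)$ is $1$, and equivalent matrices have the same reduced minor ideals. For sufficiency, I would reduce to the square full-rank case of Theorem \ref{425-Theorem-2} via the Lin-Bose lemma and the Quillen-Suslin theorem, as Lemmas \ref{415-lemma-1} and \ref{425-lemma-1} do in the pure cases.

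\textbf{Step 1 (factor off the right side).} Since $\J_\gamma(F)=\R$, Lemma \ref{Lin-Bose-theorem} gives $F=GH$ with $G\in\M_{l\times\gamma}(\R)$ and $H\in\M_{\gamma\times m}(\R)$ ZLP. If $\gamma<m$, Theorem \ref{QS-theorem} gives $V\in\GL_m(\R)$ with $HV=(\mathbf{I}_\gamma,0)$, so $FV=(G,0)$. If $\gamma=m$, then $H$ is square with unit determinant and we absorb it. Hence $F\sim_\R (G,0)$, where $G$ has full column rank $\gamma$. By Proposition \ref{lemma-reduced-2} we get $d_i(G)=d_i(F)$ and $\J_i(G)=\J_i(F)=\R$ for $i\le\gamma$.

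\textbf{Step 2 (factor off the left side).} Apply the same argument to $G^{\rm T}$. Since $\J_\gamma(G^{\rm T})=\R$, we obtain $G^{\rm T}\sim_\R(B^{\rm T},0)$, i.e.\ $G\sim_\R\begin{pmatrix}B\\0\end{pmatrix}$ with $B\in\M_{\gamma\times\gamma}(\R)$. Minors are preserved, so $\det(B)$ equals, up to a nonzero constant, $d_\gamma(B)=d_\gamma(F)=f_1\varphi_2^{t_2}\cdots\varphi_n^{t_n}$. Rescaling one row of $B$ by a unit of $\K$ makes $\det(B)$ exactly of the required form. Moreover $\J_i(B)=\J_i(F)=\R$ for $i=1,\ldots,\gamma$.

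\textbf{Step 3 (conclude).} Theorem \ref{425-Theorem-2} applies to $B$ and gives $B\sim_\R S_B$. Since $d_i(B)=d_i(F)$ for all $i$, we have $S_B=\diag(h_1,\ldots,h_\gamma)$, the nonzero block of $S_F$. Embedding the unimodular transformations into block-diagonal ones of sizes $l$ and $m$ yields $F\sim_\R S_F$.

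The main point to check is bookkeeping rather than depth. Equivalence preserves all $I_i$, $d_i$ and $\J_i$, and padding with zero rows or columns leaves the set of nonzero minors unchanged. The determinant-normalization in Step 2 also needs care, and the rank-$\gamma=\min$ edge cases are trivial.
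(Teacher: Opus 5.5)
Your proposal is correct and follows the paper's proof essentially step for step. You apply Lin--Bose plus Quillen--Suslin on the right to get $F\sim_{\R}(G,0)$, then the same on the left (you by transposing, the paper by using the ZRP form of Lin--Bose) to isolate a $\gamma\times\gamma$ block with the same $d_i$ and $\J_i$, and finally invoke Theorem~\ref{425-Theorem-2}. Your remarks on the unit constant in $\det(B)$, which can simply be absorbed into $f_1$, and on the case $\gamma=m$ fill in bookkeeping the paper leaves implicit.
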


 \begin{proof}
  The necessity is obvious from Proposition \ref{lemma-reduced-2}. It suffices to prove the sufficiency. Since $\J_\gamma(F) = \R$, by the Lin-Bose lemma there exist $F_1\in \M_{l\times \gamma}(\R)$ and $G_1\in \M_{\gamma\times m}(\R)$ such that
  \begin{equation}\label{430-equ-1}
   F = F_1G_1 ~ \text{with} ~ G_1 ~ \text{being ZLP}.
  \end{equation}
  According to the Quillen-Suslin theorem, there exists $V\in \GL_m(\R)$ such that
  \begin{equation}\label{430-equ-2}
   G_1 V = (\mathbf{I}_\gamma,0_{\gamma\times (m-\gamma)}).
  \end{equation}
  Combining Equations \eqref{430-equ-1} and \eqref{430-equ-2}, we obtain
  \begin{equation}\label{430-equ-3}
   F V = (F_1,0_{l\times (m-\gamma)}).
  \end{equation}
  Since $V\in \GL_m(\R)$, we have $F\sim_{\R}(F_1,0_{l\times (m-\gamma)})$. It follows from Proposition \ref{lemma-reduced-2} that
  \[\J_\gamma(F_1) = \J_\gamma(F) = \R.\]
  Using the Lin-Bose lemma again, there exist $F_2\in \M_{l\times \gamma}(\R)$ and $G_2\in \M_{\gamma\times \gamma}(\R)$ such that
  \begin{equation}\label{430-equ-4}
   F_1 = F_2G_2 ~ \text{with} ~ F_2 ~ \text{being ZRP}.
  \end{equation}
  Based on the Quillen-Suslin theorem, there exists $U\in \GL_l(\R)$ such that
  \begin{equation}\label{430-equ-5}
   UF_2 = \begin{pmatrix} \mathbf{I}_\gamma \\ 0_{(l-\gamma)\times \gamma}\end{pmatrix}.
  \end{equation}
  It follows from Equations \eqref{430-equ-3}--\eqref{430-equ-5} that
  \[UFV = \begin{pmatrix} G_2 & 0_{\gamma\times (m-\gamma)} \\ 0_{(l-\gamma)\times \gamma} & 0_{(l-\gamma)\times (m-\gamma)}\end{pmatrix}.\]
  Using Proposition \ref{lemma-reduced-2} again, we have
  \[d_i(G_2) = d_i(F) ~ \text{and} ~ \J_i(G_2) = \J_i(F) = \R ~\text{for}~ i=1,\ldots,\gamma.\]
  According to Theorem \ref{425-Theorem-2}, $G_2\sim_{\R} S_{G_2}$, where $S_{G_2}$ is the Smith normal form of $G_2$. Therefore, $F$ is equivalent to its Smith normal form.
 \end{proof}

 Set $f_1 = x_1^{t_1}$, and $f_j = 0$ for $j=2,\ldots,n$ in Theorem \ref{425-Theorem-3}. Then we can draw the following conclusion.

 \begin{corollary}\label{425-corollary-1}
  Let $F\in \M_{l\times m}(\R)$ with rank $\gamma$, and $d_\gamma(F) = x_1^{t_1} \cdots x_n^{t_n}$, where $1\leq \gamma \leq \min\{l,m\}$, and $t_1,\ldots,t_n$ are nonnegative integers. Then $F$ is equivalent to its Smith normal form if and only if $\J_i(F) = \R$ for $i=1,\ldots,\gamma$.
 \end{corollary}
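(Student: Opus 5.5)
The corollary should follow from Theorem \ref{425-Theorem-3} by specializing the parameters, so the plan is simply to check that $d_\gamma(F) = x_1^{t_1}\cdots x_n^{t_n}$ has exactly the shape required there. Concretely, take $f_1 = x_1^{t_1}\in \K[x_1]$. For each $j=2,\ldots,n$, choose $f_j = 0\in \K[x_1,\ldots,x_{j-1}]$. Then $\varphi_j = x_j - f_j = x_j$ for $j=2,\ldots,n$, and therefore
\[ f_1\varphi_2^{t_2}\varphi_3^{t_3}\cdots\varphi_n^{t_n} = x_1^{t_1}x_2^{t_2}\cdots x_n^{t_n} = d_\gamma(F).\]

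Before applying the theorem, I would make sure nothing in the standing setup of Section \ref{sec_Pre} excludes these choices. The definition of $\varphi_i$ only requires $f_i\in \K[x_1,\ldots,x_{i-1}]$, and the zero polynomial is allowed. Each $\varphi_j = x_j$ is irreducible, as the paper asserts in general. The exponents $t_1,\ldots,t_n$ are nonnegative integers, matching the hypotheses.

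The degenerate cases also need a glance. If $t_1=0$, then $f_1=1$ is still an element of $\K[x_1]$. If some $t_j=0$, the corresponding factor is just $1$, which Theorem \ref{425-Theorem-3} explicitly permits. If all $t_i=0$, then $d_\gamma(F)=1$, and this case also falls under Theorem \ref{425-Theorem-3}; alternatively it is covered by Lemma \ref{415-lemma-1}, since $1\in\K[x_1]$.

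With the hypotheses verified, Theorem \ref{425-Theorem-3} gives both directions at once: $F$ is equivalent to its Smith normal form if and only if $\J_i(F)=\R$ for $i=1,\ldots,\gamma$. Necessity can also be read off directly from Proposition \ref{lemma-reduced-2}, since the reduced minors of a Smith normal form generate the unit ideal. There is no real obstacle here. The only point requiring care is confirming that the choice $f_j=0$ is admissible in the definition of $\varphi_j$, and that the ordering of variables in the corollary, with $x_1$ carrying the univariate factor, matches the convention of Theorem \ref{425-Theorem-3}. It does.
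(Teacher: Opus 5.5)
Your proposal is correct and follows the paper's own argument: the corollary is obtained by specializing Theorem \ref{425-Theorem-3} with $f_1 = x_1^{t_1}$ and $f_j = 0$ for $j=2,\ldots,n$, so that $\varphi_j = x_j$. Your extra checks on zero exponents and on necessity via Proposition \ref{lemma-reduced-2} are sound, though the theorem already covers them.
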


 For matrices $F\in \M_{l\times m}(\R)$ with $d_\gamma(F) = x_1^{t_1}x_2^{t_2}$, \cite{Zeng2025Poly} studied the Smith normal form equivalence problem for matrices arising from $F$ under automorphisms of $\R$. Building on Corollary \ref{425-corollary-1}, we further explore matrix equivalence in the framework of automorphisms of $\R$.

 In the following, we assume that $\K$ is a field of characteristic zero. A $\K$-algebra automorphism of $\R$ is a bijective $\K$-linear map $\psi: ~ \R \rightarrow \R$ satisfying $\psi(fg) = \psi(f)\psi(g)$ for all $f,g\in \R$. The set of all $\K$-algebra automorphisms of $\R$ forms a group under composition, denoted by $\Aut_{\K}(\R)$. A key subclass of $\Aut_{\K}(\R)$ is formed by tame automorphisms, constructed from two elementary map families, whose definition follows.

 \begin{definition}[see \cite{Essen2000Polynomial}, Chapter 5, page 85]\label{def-tame}
  An automorphism $\psi\in \Aut_{\K}(\R)$ is called tame if it can be expressed as a finite composition of affine automorphisms and de Jonqui\`{e}res automorphisms, which are defined as follows:
  \begin{enumerate}
    \item affine automorphism: a map of the form
          \[\psi(x_1,\ldots,x_n) = \left(\sum_{j=1}^{n}a_{1j}x_j+c_1,\ldots,\sum_{j=1}^{n}a_{nj}x_j+c_n\right),\]
          where $A = (a_{ij})\in \GL_n(\K)$ and $c_1,\ldots,c_n\in \K$.

    \item de Jonqui\`{e}res automorphism: a map of the form
        \[\begin{cases}
            \psi(x_1)= a_1x_1+b_1, \\
            \psi(x_j) = a_jx_j+q_j(x_1,\ldots,x_{j-1}), ~ j=2,\ldots,n,
          \end{cases}\]
        where $a_1,\ldots,a_n\in \K\setminus \{0\}$, $b_1\in \K$, and $q_j\in \K[x_1,\ldots,x_{j-1}]$ for $j=2,\ldots,n$.
  \end{enumerate}
  The subgroup of $\Aut_{\K}(\R)$ consisting of all tame automorphisms is denoted by $\TA_{\K}(\R)$ and is called the tame automorphism group.
 \end{definition}

 With the notion of tame automorphisms now established, we state a more general version of Corollary \ref{425-corollary-1}.

 \begin{corollary}\label{425-corollary-2}
  Let $\psi \in \TA_{\K}(\R)$ and $g_1,\ldots,g_n\in \R$ satisfy $\psi(x_1,\ldots,x_n) = (g_1,\ldots,g_n)$. Let $F\in \M_{l\times m}(\R)$ with rank $\gamma$, and $d_\gamma(F) = g_1^{t_1} \cdots g_n^{t_n}$, where $1\leq \gamma \leq \min\{l,m\}$, and $t_1,\ldots,t_n$ are nonnegative integers. Then $F$ is equivalent to its Smith normal form if and only if $\J_i(F) = \R$ for $i=1,\ldots,\gamma$.
 \end{corollary}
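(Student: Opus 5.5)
The plan is to transport the problem along $\psi$ and reduce to Corollary \ref{425-corollary-1}. Since $\psi$ is a $\K$-algebra automorphism of $\R$, the inverse $\psi^{-1}\in \TA_{\K}(\R)$ is again an automorphism. Apply $\psi^{-1}$ entrywise to $F$ and set $\widetilde{F} = \psi^{-1}(F)\in \M_{l\times m}(\R)$.

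The first step is to record the invariance properties of entrywise application of a ring automorphism $\sigma$ of $\R$.
\begin{enumerate}
\item Minors commute with $\sigma$, i.e.\ $\det(\sigma(F)(\cdot))=\sigma(\det F(\cdot))$. Hence $\rank(\sigma(F))=\rank(F)$, because $\sigma$ is injective.
\item $\sigma$ preserves divisibility and units, so $d_i(\sigma(F))=\sigma(d_i(F))$ up to a nonzero constant factor.
\item The reduced minors of $\sigma(F)$ are the $\sigma$-images of those of $F$, so $\J_i(\sigma(F))=\sigma(\J_i(F))$. In particular $\J_i(\sigma(F))=\R$ if and only if $\J_i(F)=\R$.
\item $\sigma$ maps $\GL_l(\R)$ to $\GL_l(\R)$ and respects matrix products. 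Therefore $F\sim_{\R} Q$ if and only if $\sigma(F)\sim_{\R}\sigma(Q)$.
\item The invariant factors satisfy $h_i(\sigma(F))=\sigma(h_i(F))$ up to units. Thus $\sigma(S_F)$ is the Smith normal form of $\sigma(F)$, up to multiplying diagonal entries by nonzero constants, and such constants can be absorbed by a diagonal unimodular matrix.
\end{enumerate}

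The second step is to apply these facts with $\sigma=\psi^{-1}$. Since $\psi(x_j)=g_j$, we have $\psi^{-1}(g_j)=x_j$. This gives
\[d_\gamma(\widetilde F)=c\,\psi^{-1}(g_1^{t_1}\cdots g_n^{t_n})=c\,x_1^{t_1}\cdots x_n^{t_n},\qquad c\in\K\setminus\{0\}.\]
The nonzero constant $c$ is harmless, since gcds are defined up to units; one may also normalize $c=1$ by scaling a row. Also $\rank(\widetilde F)=\gamma$.

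Corollary \ref{425-corollary-1} then says that $\widetilde F\sim_{\R} S_{\widetilde F}$ if and only if $\J_i(\widetilde F)=\R$ for $i=1,\ldots,\gamma$. By the invariance facts, the left-hand condition is equivalent to $F\sim_{\R} S_F$: applying $\psi$ to $U\widetilde F V=S_{\widetilde F}$ yields $\psi(U)F\psi(V)=\psi(S_{\widetilde F})$, which equals $S_F$ up to constant diagonal units. The right-hand condition is equivalent to $\J_i(F)=\R$. This proves the corollary.

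The only point requiring care is the normalization of $d_i$ and of the invariant factors up to units. One should check that $\sigma$ sends a gcd of a family to a gcd of the image family. This holds because $\sigma$ is a ring automorphism of the UFD $\R$ and so preserves divisibility in both directions. Consequently $\sigma(d_\gamma)$ differs from the gcd computed after applying $\sigma$ only by a unit of $\R$, i.e.\ by an element of $\K\setminus\{0\}$.

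Note that tameness and characteristic zero are not actually used; the argument works for any $\psi\in\Aut_{\K}(\R)$. The tameness hypothesis only guarantees that such $g_j$ arise concretely.
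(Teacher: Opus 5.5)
Your proof is correct and follows the intended route. The paper gives no written proof and presents this result as Corollary~\ref{425-corollary-1} transported along $\psi$; your invariance checks (minors, determinantal divisors up to nonzero constants, reduced-minor ideals, unimodularity and Smith forms under entrywise application of $\psi^{\pm 1}$) supply exactly the omitted details, and your remark is also right that neither tameness nor characteristic zero is used, so the statement holds for every $\psi\in\Aut_{\K}(\R)$.
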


 By the definition of de Jonqui\`{e}res automorphisms above, when $f_1 = (a_1x_1+b_1)^{t_1}$ in Theorem \ref{425-Theorem-3}, the tame automorphism $\psi$ in Corollary \ref{425-corollary-2} may be chosen as a de Jonqui\`{e}res automorphism, which realizes the setting of Theorem \ref{425-Theorem-3}. Otherwise, no automorphism of $\R$ can achieve such a transformation. It follows that under automorphisms of $\R$, Corollary \ref{425-corollary-2} is a special case of Theorem \ref{425-Theorem-3}, corresponding to $f_1$ being a power of a linear form in $x_1$. Nevertheless, Corollary \ref{425-corollary-2} substantially extends the matrix form established in Theorem \ref{425-Theorem-3}, thereby allowing our framework to apply in a considerably broader setting.

 Next, we prove that the automorphism constructed in Lemma 3.2 of \cite{Zeng2025Poly} is tame, which shows that the equivalence classes studied in \citep{Zeng2025Poly} are special cases of our general framework.

 \begin{lemma}
  Let $\psi\in \Aut_{\K}(\K[x_1,x_2,x_3])$ be the automorphism defined by
  \[\begin{cases}
      \psi(x_1) = 2x_1+x_2, \\
      \psi(x_2) = 2x_2+x_3, \\
      \psi(x_3) = -4x_1^2+x_2^2+ax_1+bx_2+(c+2x_1+x_2)x_3,
    \end{cases}\]
  where $a,b,c\in \K$ satisfy $\Delta = a - 2b + 4c \neq 0$. Then $\psi\in \TA_{\K}(\K[x_1,x_2,x_3])$.
 \end{lemma}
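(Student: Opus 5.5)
The plan is to write $\psi$ explicitly as a composition of one affine automorphism and one de Jonqui\`{e}res automorphism, in the sense of Definition \ref{def-tame}. The guiding idea is to change to the variables $u = 2x_1+x_2$ and $v = 2x_2+x_3$, which are exactly $\psi(x_1)$ and $\psi(x_2)$. Once this is done, $\psi(x_3)$ should become a polynomial in $u,v$ plus a nonzero multiple of a single remaining linear form.

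\textbf{Rewriting $\psi(x_3)$.} First I factor $x_2^2-4x_1^2=(x_2-2x_1)u$ and substitute $x_3=v-2x_2$. This gives
\[
\psi(x_3) = (x_2-2x_1)u + ax_1+bx_2+(c+u)(v-2x_2).
\]
Expanding, the terms containing $u$ collect to $-u(2x_1+x_2)+uv = -u^2+uv$. The remaining terms are $cv + ax_1 + (b-2c)x_2$. Then I eliminate $x_2$ by writing $x_2=u-2x_1$, and I expect
\[
\psi(x_3) = \Delta x_1 + q(u,v), \qquad q(u,v) = -u^2+uv+cv+(b-2c)u,
\]
where $\Delta=a-2b+4c$ appears exactly as the coefficient of $x_1$. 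This bookkeeping is the main step. It is routine but must be checked carefully, since the hypothesis $\Delta\neq 0$ has to emerge precisely here.

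\textbf{The decomposition.} Let $L$ be the linear map $(x_1,x_2,x_3)\mapsto(2x_1+x_2,\,2x_2+x_3,\,\Delta x_1)$. Its coefficient matrix has rows $(2,1,0),(0,2,1),(\Delta,0,0)$, and expanding along the last row gives determinant $\Delta\cdot(1\cdot 1-0\cdot 2)=\Delta\neq 0$. Hence $L$ is an affine automorphism. Let $J$ be the map $(y_1,y_2,y_3)\mapsto(y_1,\,y_2,\,y_3+q(y_1,y_2))$. This is a de Jonqui\`{e}res automorphism with $a_1=a_2=a_3=1$, $b_1=0$, $q_2=0$ and $q_3=q$. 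Substituting the components of $L$ into $J$ gives $(u,\,v,\,\Delta x_1+q(u,v))=(\psi(x_1),\psi(x_2),\psi(x_3))$.

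\textbf{Conclusion and conventions.} The identity above says that $\psi$ is the composite of the two elementary automorphisms determined by $L$ and $J$. I will remark that the order of composition depends on whether automorphisms act on variables or on polynomial maps. This does not matter for tameness: the inverse of an affine map is affine, and the inverse of a de Jonqui\`{e}res map is de Jonqui\`{e}res. Therefore $\TA_{\K}(\K[x_1,x_2,x_3])$, as a group, contains $\psi$ under either convention. I would finish with a one-line verification that the composite reproduces $\psi(x_3)$ exactly.
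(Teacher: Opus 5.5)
Your proof is correct. The rewriting $\psi(x_3)=\Delta x_1+q(u,v)$ with $q(u,v)=-u^2+uv+(b-2c)u+cv$ checks out, and $L$ and $J$ are legitimate affine and de Jonqui\`{e}res automorphisms. Your route differs from the paper's in how the factorization is organized.

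The paper proceeds in three steps:
\begin{enumerate}
\item It takes the affine map $\psi_1=(2x_1+x_2,\,2x_2+x_3,\,x_3)$, which has determinant $4$, and computes $\psi_1^{-1}$ explicitly, dividing by $2$ and $4$.
\item It forms $\Phi=\psi\circ\psi_1^{-1}$.
\item It splits $\Phi$ as the scaling $\psi_2=(y_1,y_2,\tfrac{\Delta}{4}y_3)$ followed by a de Jonqui\`{e}res map $\psi_3$ with $q_3=-u_1^2+u_1u_2+\tfrac{a}{2}u_1+\tfrac{2b-a}{4}u_2$.
\end{enumerate}
This gives three factors, $\psi=\psi_3\circ\psi_2\circ\psi_1$.

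You instead take the third component of the affine factor to be $\Delta x_1$, which is exactly what survives once $\psi(x_3)$ is written in $u,v$. This has three advantages:
\begin{enumerate}
\item It needs only two factors.
\item It avoids computing any inverse.
\item The hypothesis $\Delta\neq 0$ appears directly as $\det L$; in the paper it enters only through the invertibility of the scaling $\psi_2$.
\end{enumerate}

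The two factorizations differ by the affine shear $T=(y_1,\,y_2,\,y_3+\tfrac{\Delta}{2}y_1-\tfrac{\Delta}{4}y_2)$: one checks $L=T\circ\psi_2\circ\psi_1$ and $J=\psi_3\circ T^{-1}$. A minor side benefit of your version is that it never divides by $2$ or $4$, so it works in any characteristic, although the paper assumes characteristic zero anyway.

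On composition conventions, your conclusion is right, but the more direct justification is different from the one you give. Passing between polynomial maps and $\K$-algebra endomorphisms reverses the order of composition while preserving the classes of affine and de Jonqui\`{e}res maps. So the factorization simply transfers with its factors reversed, and no appeal to inverses is needed.
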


 \begin{proof}
  Consider the automorphism $\psi_1$ defined by
  \[\psi_1(x_1,x_2,x_3) = (2x_1+x_2,2x_2+x_3,x_3).\]
  Clearly, the Jacobian matrix of $\psi_1$ is
  \[J = \begin{pmatrix} 2 & 1 & 0 \\
   0 & 2 & 1 \\ 0 & 0 & 1\end{pmatrix}.\]
  Since $\det(J) \neq 0$ in $\K$, $\psi_1$ is an affine automorphism. The inverse of $\psi_1$ is
  \[\psi_1^{-1}(y_1,y_2,y_3) = (\frac{2y_1-y_2+y_3}{4}, \frac{y_2-y_3}{2}, y_3).\]
  Set $\Phi = \psi \circ \psi_1^{-1}$. Then
  \[\Phi(y_1,y_2,y_3) = (y_1,y_2, \frac{\Delta}{4}y_3 - y_1^2+y_1y_2+\frac{a}{2}y_1+\frac{2b-a}{4}y_2).\]
  Since $\Delta \neq 0$, we define the affine automorphism $\psi_2$ by
  \[\psi_2(y_1,y_2,y_3) = (y_1,y_2,\frac{\Delta}{4}y_3),\]
  and the de Jonqui\`{e}res automorphism $\psi_3$ by
  \[\psi_3(u_1,u_2,u_3) = (u_1,u_2,u_3+q_3(u_1,u_2)),\]
  where $q_3(u_1,u_2) = -u_1^2+u_1u_2+\frac{a}{2}u_1+\frac{2b-a}{4}u_2$. Then
  \[(\psi_3 \circ \psi_2)(y_1,y_2,y_3) = \psi_3 (y_1,y_2,\frac{\Delta}{4}y_3) =
    (y_1,y_2, \frac{\Delta}{4}y_3 + q_3(y_1,y_2)) = \Phi(y_1,y_2,y_3).\]
  This implies that $\Phi = \psi_3 \circ \psi_2$. It follows that
  \[ \psi = (\psi_3 \circ \psi_2) \circ \psi_1.\]
  Consequently, $\psi\in \TA_{\K}(\K[x_1,x_2,x_3])$.
 \end{proof}

 From Definition \ref{def-tame} and the preceding example, we have seen that all automorphisms considered thus far are tame. This naturally raises a fundamental question in polynomial automorphism theory: is every $\K$-algebra automorphism of $\R$ tame? The answer to this long-standing problem depends critically on the number of variables $n$. For $n=1$, i.e., $\R = \K[x_1]$, every automorphism is affine and hence tame. For $n=2$, i.e., $\R = \K[x_1,x_2]$, the Jung-van der Kulk theorem \citep{jung1942ganze,vanderkulk1953polynomial} guarantees that all automorphisms are tame. For $n\geq 3$, the first explicit candidate for a non-tame (commonly called wild) automorphism of $\K[x_1,x_2,x_3]$ was constructed by \cite{nagata1972autom}, who conjectured it cannot be expressed as a finite composition of affine and de Jonqui\`{e}res automorphisms. This Nagata conjecture was resolved affirmatively by \cite{shestakov2003nagata,shestakov2004tame} in a landmark series of works.

 While tame automorphisms admit an explicit structure as finite compositions of affine and de Jonqui\`{e}res automorphisms, no analogous structural characterization exists for wild automorphisms. Consequently, although the Nagata automorphism and its generalizations provide explicit wild automorphisms in certain cases, for polynomial rings $\R = \K[x_1,\ldots,x_n]$ with $n\geq 3$ in general there is no systematic method for constructing wild automorphisms, and proving that a given automorphism is indeed wild remains notoriously difficult. This makes the study of matrix equivalence via explicit wild automorphism constructions a particularly challenging problem, and an important open direction for future research.

\section{Concluding remarks}\label{sec_conclusions}

 This paper studies the classical problem of determining when a multivariate polynomial matrix is equivalent to its Smith normal form. This problem has been extensively investigated since the landmark conjecture of \cite{Frost1978Equ}, which asserts that a polynomial matrix is equivalent to its Smith normal form if and only if the reduced minors of each order of the matrix generate the unit ideal. While this conjecture is known to be false in general, it has been proven to hold for several important classes of matrices. We consider a broad class of square matrices over $\K[x_1,x_2\ldots,x_n]$ and, using techniques from matrix theory and polynomial ideal theory, establish that Frost and Storey's conjecture remains valid for this class. Furthermore, by applying the Quillen-Suslin theorem and the Lin-Bose lemma, we extend our main result to rank-deficient and non-square matrices, and also discuss the matrix equivalence problem up to polynomial ring automorphisms. Our result unifies and extends several previous results in the literature.

 While this paper provides a complete theoretical resolution to the equivalence problem for the aforementioned class of matrices, the algorithmic aspects of this problem remain largely underdeveloped. The reduction of a multivariate polynomial matrix to its Smith normal form requires the construction of a sequence of unimodular transformations. Although \cite{Fabianska2007} have proposed an algorithm for computing such unimodular matrices based on the Quillen-Suslin theorem, its computational complexity is at least exponential, which severely limits its practical applicability. Consequently, the design of efficient algorithms for computing unimodular matrices over multivariate polynomial rings constitutes a significant open problem for future research. In addition, characterizing further classes of multivariate polynomial matrices for which Frost and Storey's conjecture holds remains a fundamental and challenging direction in this field.

\section*{Acknowledgments}

 This research was supported by the National Key Research and Development Program of China under Grant No. 2025YFA1017200, the National Natural Science Foundation of China under Grant No. 12201210, the Sichuan Science and Technology Program under Grant No. 2024NSFSC0418.

\bibliographystyle{elsarticle-harv}

\bibliography{me_ref}

\begin{thebibliography}{36}
\expandafter\ifx\csname natexlab\endcsname\relax\def\natexlab#1{#1}\fi
\expandafter\ifx\csname url\endcsname\relax
  \def\url#1{\texttt{#1}}\fi
\expandafter\ifx\csname urlprefix\endcsname\relax\def\urlprefix{URL }\fi

\bibitem[{Bose(1982)}]{Bose1982}
Bose, N., 1982. Applied Multidimensional Systems Theory. Van Nostrand Reinhold,
  New York.

\bibitem[{Bose et~al.(2003)Bose, Buchberger, and Guiver}]{Bose2003}
Bose, N., Buchberger, B., Guiver, J., 2003. Multidimensional Systems Theory and
  Applications. Dordrecht, The Netherlands: Kluwer.

\bibitem[{Cox et~al.(2007)Cox, Little, and O'shea}]{Cox2007}
Cox, D., Little, J., O'shea, D., 2007. Ideals, Varieties, and Algorithms, Third
  Edition, Undergraduate Texts in Mathematics. Springer, New York.

\bibitem[{Fabia\'{n}ska and Quadrat(2007)}]{Fabianska2007}
Fabia\'{n}ska, A., Quadrat, A., 2007. Applications of the quillen-suslin
  theorem to multidimensional systems theory. Radon Series on Computational and
  Applied Mathematics 3, 23--106.

\bibitem[{Frost and Storey(1978)}]{Frost1978Equ}
Frost, M., Storey, C., 1978. Equivalence of a matrix over {R}$[s,z]$ with its
  {S}mith form. International Journal of Control 28~(5), 665--671.

\bibitem[{Frost and Storey(1981)}]{Frost1981}
Frost, M., Storey, C., 1981. Equivalence of matrices over {R}$[s,z]$: a
  counter-example. International Journal of Control 34~(6), 1225--1226.

\bibitem[{Gohberg et~al.(1982)Gohberg, Lancaster, and
  Rodman}]{Gohberg1982Matrix}
Gohberg, I., Lancaster, P., Rodman, L., 1982. Matrix Polynomials. Academic
  Press, New York.

\bibitem[{Guan et~al.(2025)Guan, Liu, Zheng, Wu, and Liu}]{Guan2025NewR}
Guan, J., Liu, J., Zheng, L., Wu, T., Liu, J., 2025. New results on equivalence
  of multivariate polynomial matrices. Journal of Systems Science and
  Complexity 38, 1823--1832.

\bibitem[{Guiver and Bose(1982)}]{Guiver1982}
Guiver, J., Bose, N., 1982. Polynomial matrix primitive factorization over
  arbitrary coefficient field and related results. IEEE Transactions on
  Circuits and Systems 29~(10), 649--657.

\bibitem[{Jung(1942)}]{jung1942ganze}
Jung, H., 1942. \"{U}ber ganze birationale {T}ransformationen der {E}bene.
  Journal f\"{u}r die reine und angewandte Mathematik 184, 161--174.

\bibitem[{Lam(1978)}]{Lam1978}
Lam, T., 1978. Serre's Conjecture. Lecture Notes in Mathematics 635. Springer,
  Berlin.

\bibitem[{Li et~al.(2025)Li, Chen, and Liu}]{Li2025Smith}
Li, D., Chen, Z., Liu, J., 2025. Smith form of the matrix over the unique
  factorization domain. Scientia Sinica Mathematica 55~(5), 919--926.

\bibitem[{Li et~al.(2019)Li, Liang, and Liu}]{LiD2019}
Li, D., Liang, R., Liu, J., 2019. Some further results on the {S}mith form of
  bivariate polynomial matrices. Journal of Systems Science and Mathematical
  Sciences 39~(12), 1983--1997.

\bibitem[{Li et~al.(2022)Li, Liu, and Chu}]{LiD2022The}
Li, D., Liu, J., Chu, D., 2022. The {S}mith form of a multivariate polynomial
  matrix over an arbitrary coefficient field. Linear and Multilinear Algebra
  70~(2), 366--379.

\bibitem[{Lin(1988)}]{Lin1988}
Lin, Z., 1988. On matrix fraction descriptions of multivariable linear n-{D}
  systems. IEEE Transactions on Circuits and Systems 35~(10), 1317--1322.

\bibitem[{Lin and Bose(2001)}]{Lin2001A}
Lin, Z., Bose, N., 2001. A generalization of {S}erre's conjecture and some
  related issues. Linear Algebra and its Applications 338, 125--138.

\bibitem[{Lin et~al.(2006)Lin, Boudellioua, and Xu}]{Lin2006On}
Lin, Z., Boudellioua, M., Xu, L., 2006. On the equivalence and factorization of
  multivariate polynomial matrices. In: Proceeding of ISCAS. Kos, Greece, pp.
  4911--4914.

\bibitem[{Liu et~al.(2024)Liu, Li, and Wu}]{Liu2024}
Liu, J., Li, D., Wu, T., 2024. The {S}mith normal form and reduction of weakly
  linear matrices. Journal of Symbolic Computation 120~(102232), 1--14.

\bibitem[{Liu et~al.(2025)Liu, Wu, Guan, and Kang}]{Liu2025The}
Liu, J., Wu, T., Guan, J., Kang, Y., 2025. The {S}mith form of quasi weakly
  linear polynomial matrices. Journal of Systems Science and Mathematical
  Sciences, 1--18. DOI: 10.12341/jssms250752.

\bibitem[{Lu et~al.(2024)Lu, Wang, Xiao, and Zheng}]{Lu2024arxiv}
Lu, D., Wang, D., Xiao, F., Zheng, X., 2024. On the equivalence problem of
  smith forms for multivariate polynomial matrices. Preprint, 1--19. DOI:
  arXiv:2407.06649v1.

\bibitem[{Lu et~al.(2025)Lu, Wang, Xiao, and Zheng}]{Lu2025JSSC}
Lu, D., Wang, D., Xiao, F., Zheng, X., 2025. Theory of {S}mith forms for
  bivariate polynomial matrices. Journal of Systems Science and Complexity,
  1--22. DOI: 10.1007/s11424--025--5125--0.

\bibitem[{Lu et~al.(2026)Lu, Wang, Xiao, and Zheng}]{Lu2026Smith}
Lu, D., Wang, D., Xiao, F., Zheng, X., 2026. Smith normal forms of bivariate
  polynomial matrices. Linear Algebra and its Applications 737, 308--330.

\bibitem[{Nagata(1972)}]{nagata1972autom}
Nagata, M., 1972. On Automorphism Group of {$k[x,y]$}. Vol.~5 of Lectures in
  Mathematics, Kyoto University. Kinokuniya, Tokyo.

\bibitem[{Noferini and Williams(2025)}]{Noferini2025Smith}
Noferini, V., Williams, G., 2025. Smith forms of matrices in companion rings,
  with group theoretic and topological applications. Linear Algebra and its
  Applications 708, 372--404.

\bibitem[{Quillen(1976)}]{Quillen1976Projective}
Quillen, D., 1976. Projective modules over polynomial rings. Inventiones
  mathematicae 36~(1), 167--171.

\bibitem[{Serre(1955)}]{Serre1955}
Serre, J., 1955. Faisceaux alg\'{e}briques coh\'{e}rents. Annals of Mathematics
  61~(2), 197--278.

\bibitem[{Shestakov and Umirbaev(2003)}]{shestakov2003nagata}
Shestakov, I., Umirbaev, U., 2003. The {N}agata automorphism is wild.
  Proceedings of the National Academy of Sciences 100~(22), 12561--12563.

\bibitem[{Shestakov and Umirbaev(2004)}]{shestakov2004tame}
Shestakov, I., Umirbaev, U., 2004. The tame and the wild automorphisms of
  polynomial rings in three variables. Journal of the American Mathematical
  Society 17~(1), 197--227.

\bibitem[{Strang(2010)}]{Strang2010Linear}
Strang, G., 2010. Linear Algebra and Its Applications. Academic Press, New
  York.

\bibitem[{Suslin(1976)}]{Suslin1976Projective}
Suslin, A., 1976. Projective modules over polynomial rings are free. Soviet
  mathematics - Doklady 17, 1160--1165.

\bibitem[{van~den Essen(2000)}]{Essen2000Polynomial}
van~den Essen, A., 2000. Polynomial Automorphisms and the Jacobian Conjecture.
  Vol. 190 of Progress in Mathematics. Birkh\"{a}user, Basel.

\bibitem[{van~der Kulk(1953)}]{vanderkulk1953polynomial}
van~der Kulk, W., 1953. On polynomial rings in two variables. Nieuw Archief
  voor Wiskunde 1~(3), 33--41.

\bibitem[{Wang and Feng(2004)}]{Wang2004On}
Wang, M., Feng, D., 2004. On {L}in-{B}ose problem. Linear Algebra and its
  Applications 390, 279--285.

\bibitem[{Youla and Gnavi(1979)}]{Youla1979Notes}
Youla, D., Gnavi, G., 1979. Notes on n-dimensional system theory. IEEE
  Transactions on Circuits and Systems 26~(2), 105--111.

\bibitem[{Zeng et~al.(2025)Zeng, Liu, and Wu}]{Zeng2025Poly}
Zeng, Z., Liu, J., Wu, T., 2025. Polynomial algebra isomorphism and {S}mith
  form of matrices. Journal of Systems Science and Mathematical Sciences,
  1--11. DOI: 10.12341/jssms250751.

\bibitem[{Zheng et~al.(2023)Zheng, Lu, Wang, and Xiao}]{Zheng2023New}
Zheng, X., Lu, D., Wang, D., Xiao, F., 2023. New results on the equivalence of
  bivariate polynomial matrices. Journal of Systems Science and Complexity
  36~(1), 77--95.

\end{thebibliography}

\begin{appendix}

\section{\label{sec:appendix}}

 We now present the detailed proof of the auxiliary claim employed in the proof of Theorem \ref{425-Theorem-2}. The argument follows the same line of reasoning as Lemma \ref{425-lemma-2}, but must be carried out in conjunction with the inductive hypothesis from the main theorem.

 \begin{proof}[Proof of Claim in Theorem \ref{425-Theorem-2}]
  Let $S = \diag(h_1\varphi_r^{s_1},\ldots,h_\theta\varphi_r^{s_\theta},h_{\theta+1}\varphi_r^{s+1},
  \ldots,h_l\varphi_r^{s+1})$. By Lemma \ref{414-lem-1}, $S$ is the Smith normal form of $A$. We shall prove $A\sim_{\R} S$ by considering the following three cases separately.

  \textbf{First Case:} $s_1=\cdots=s_\theta = s$.

  Let
  \[A_1 = \diag(h_1,\ldots,h_\theta,h_{\theta+1},
  \ldots,h_l) \cdot U_\theta \cdot \diag(1,\ldots,1,\varphi_r,\ldots,\varphi_r).\]
  Then $A = \varphi_r^{s} \cdot A_1$. Clearly, $\J_i(A_1) = \R$ for $i=1,\ldots,l$, and the Smith normal form of $A_1$ is
  \[S_{A_1} = \diag(h_1,\ldots,h_\theta,h_{\theta+1}\varphi_r,\ldots,h_l\varphi_r).\]
  Set $U_\theta = (u_{ij})_{l\times l}$, where $u_{ij}\in \R_r$ for $1\leq i,j\leq l$. Then
  \[ A_1 = \begin{pmatrix}
    h_1u_{11} & \cdots & h_1u_{1\theta} & h_1\varphi_ru_{1,\theta+1} & \cdots & h_1\varphi_ru_{1l} \\
    h_2u_{21} & \cdots & h_2u_{2\theta} & h_2\varphi_ru_{2,\theta+1} & \cdots & h_2\varphi_ru_{2l} \\
     \vdots & \ddots & \vdots & \vdots & \ddots & \vdots \\
    h_lu_{l1} & \cdots & h_lu_{l\theta} & h_l\varphi_ru_{l,\theta+1} & \cdots & h_l\varphi_ru_{ll}
    \end{pmatrix}.\]
  Let $B$ be the $l\times \theta$ submatrix formed by the first $\theta$ columns of $A_1$. We assert that
  \begin{equation*}\label{501-lemma-equ-1}
   d_i(B) = h_1\cdots h_i ~ \text{and} ~ \J_i(B) = \R_r ~ \text{for} ~ i=1,\ldots,\theta.
  \end{equation*}
  For any given integer $i_0$ with $1\leq i_0 \leq \theta$, assume that $\alpha_1^{(i_0)},\ldots,\alpha_{N_{i_0}}^{(i_0)}\in \R_r$ are all $i_0\times i_0$ minors of $B$. Since $d_{i_0}(A_1) = h_1\cdots h_{i_0}$ and $B$ is a submatrix of $A_1$,
  \[ d_{i_0}(A_1)\mid \alpha_j^{(i_0)} ~ \text{for} ~j=1,\ldots,N_{i_0}.\]
  Set $q_j = \frac{\alpha_j^{(i_0)}}{d_{i_0}(A_1)}$, we obtain $q_j\in \R_r$ for $j=1,\ldots,N_{i_0}$. Let $\mathcal{C}$ be the set formed by all $i_0\times i_0$ reduced minors of $A_1$. Then $\{q_1,\ldots,q_{N_{i_0}}\}$ is a part of $\mathcal{C}$. If $\langle q_1,\ldots,q_{N_{i_0}} \rangle_{\R_r} \neq \R_r$, then there exists $\vec{\omega} = (\omega_1,\ldots,\omega_{r-1}, \omega_{r+1},\ldots,\omega_n)\in \overline{\K}^{n-1}$ such that
  \begin{equation*}\label{501-lemma-equ-2}
   q_j(\vec{\omega}) = 0 ~ \text{for} ~ j=1,\ldots,N_{i_0}.
  \end{equation*}
  For any $q\in \mathcal{C} \setminus \{q_1,\ldots,q_{N_{i_0}}\}$, it follows readily from the structure of $A_1$ and $\gcd(d_{i_0}(A_1),\varphi_r)=1$ that $\varphi_r\mid q$. Let $\omega_r = f_r(\omega_1,\ldots,\omega_{r-1})$. Then
  \begin{equation*}\label{501-lemma-equ-3}
   (\omega_1,\ldots,\omega_{r-1},\omega_{r},\omega_{r+1},\ldots,\omega_n)\in \mathds{V}(\langle \mathcal{C}\rangle_{\R}).
  \end{equation*}
  Since $\langle \mathcal{C}\rangle_{\R} = \J_{i_0}(A_1)$, Equation \eqref{427-lemma-equ-3} contradicts the fact that $\J_{i_0}(A_1) = \R$. Thus, $\langle q_1,\ldots,q_{N_{i_0}} \rangle_{\R_r} = \R_r$. It follows that $d_{i_0}(B) = h_1\cdots h_{i_0}$ and $\J_{i_0}(B) = \R_r$. According to the Lin-Bose lemma, there exist $B_1\in \M_{l\times \theta}(\R_r)$ and $B_2\in \M_{\theta\times \theta}(\R_r)$ such that
  \begin{equation}\label{501-lemma-equ-3-2}
   B = B_1B_2 ~ \text{with} ~ B_1 ~ \text{being ZRP}.
  \end{equation}
  Based on the Quillen-Suslin theorem, there exists $B_3\in \GL_l(\R_r)$ such that \begin{equation}\label{501-lemma-equ-3-3}
   B_3B_1 = \begin{pmatrix} \mathbf{I}_\theta \\ 0_{(l-\theta)\times \theta}\end{pmatrix}.
  \end{equation}
  Combining Equations \eqref{501-lemma-equ-3-2} and \eqref{501-lemma-equ-3-3}, we have
  \[ B\sim_{\R_r} \begin{pmatrix} B_2 \\ 0_{(l-\theta)\times \theta}\end{pmatrix}.\]
  It follows from Proposition \ref{lemma-reduced-2} that
  \begin{equation*}\label{501-lemma-equ-3-4}
   d_i(B_2) = h_1\cdots h_i ~ \text{and} ~ \J_i(B_2) = \R_r ~ \text{for} ~ i=1,\ldots,\theta.
  \end{equation*}
  By the induction hypothesis, we obtain
  \[B_2\sim_{\R_r} \diag(h_1,\ldots,h_\theta).\]
  It follows that
  \[B\sim_{\R_r} \begin{pmatrix}
                  h_1 &       &     \\
                      &\ddots &     \\
                      &       & h_\theta \\
                      &       &      \\
                      &       &
                \end{pmatrix} \triangleq S_B.\]
  Applying a finite sequence of elementary row and column operations over $\R_r$ to $A_1$, we conclude that
  \[A_1\sim_{\R_r} \begin{pmatrix}
    h_1 &        &     & \varphi_rv_{1,\theta+1} & \cdots & \varphi_rv_{1l} \\
        & \ddots &     & \vdots             & \ddots & \vdots  \\
        &        & h_\theta & \varphi_rv_{\theta,\theta+1} & \cdots & \varphi_rv_{\theta l} \\
        &        &     & \varphi_rv_{\theta+1,\theta+1} & \cdots & \varphi_rv_{\theta+1,l} \\
        &        &     & \vdots & \ddots & \vdots \\
        &        &     & \varphi_rv_{l,\theta+1} & \cdots & \varphi_rv_{ll}
    \end{pmatrix} \triangleq A_2,\]
  where $v_{ij}\in \R_r$ for $1\leq i \leq l$ and $\theta+1\leq j \leq l$. For any given integer $i_0$ with $1 \leq i_0 \leq \theta$, let
  \[D_j^{(i_0)} = \begin{pmatrix}
          h_1 &        &         & \varphi_rv_{1j} \\
              & \ddots &         & \vdots    \\
              &        & h_{i_0-1} & \varphi_rv_{i_0-1,j} \\
              &        &         & \varphi_rv_{i_0,j}
        \end{pmatrix} ~ \text{for} ~ j=\theta+1,\ldots,l.\]
  Since $D_j^{(i_0)}$ is an $i_0\times i_0$ submatrix of $A_2$, $d_{i_0}(A_2)\mid \det(D_j^{(i_0)})$. It follows that $h_{i_0}\mid v_{i_0,j}$ for $j=\theta+1,\ldots,l$. Finitely many elementary column operations over $\R$ on $A_2$ yield
  \[A_2 \sim_{\R} \begin{pmatrix}
    h_1 &        &     &   &   &   \\
        & \ddots &     &   &   &   \\
        &        & h_\theta &   &   &   \\
        &        &     & \varphi_rv_{\theta+1,\theta+1} & \cdots & \varphi_rv_{\theta+1,l} \\
        &        &     & \vdots & \ddots & \vdots \\
        &        &     & \varphi_rv_{l,\theta+1} & \cdots & \varphi_rv_{ll}
    \end{pmatrix} \triangleq A_3.\]
  Set $V = (v_{ij})\in \M_{(l-\theta)\times (l-\theta)}(\R_r)$, where $\theta+1\leq i,j\leq l$. Since $\R_r \subset \R$,
  \begin{equation*}\label{501-lemma-equ-4}
   A_1 \sim_{\R} A_3 = \diag(h_1,\ldots,h_\theta,\varphi_r,\ldots,\varphi_r) \cdot \diag(\mathbf{I}_\theta, V),
  \end{equation*}
  According to Lemma \ref{422-lemma-1},
  \begin{equation*}\label{501-lemma-equ-5}
   d_i(V) = \frac{d_{\theta+i}(A_1)}{h_1\cdots h_\theta \varphi_r^i}=h_{\theta+1}\cdots h_{\theta+i} ~ \text{and} ~ \J_i(V) = \J_{\theta+i}(A_1)=\R ~ \text{for} ~ i = 1,\ldots, l-\theta.
  \end{equation*}
  By the induction hypothesis again,
  \begin{equation*}\label{501-lemma-equ-6}
   V\sim_{\R_r} \diag(h_{\theta+1},\ldots,h_l).
  \end{equation*}
  Combining Equations \eqref{427-lemma-equ-4} and \eqref{427-lemma-equ-6}, we have  $A_1\sim_{\R} S_{A_1}$. Consequently, $A\sim_{\R} S$.

  \textbf{Second Case:} $s_\theta < s$.

  Let $U_\theta = (u_{ij})_{l\times l}$, where $u_{ij}\in \R_r$ for $1\leq i,j\leq l$. Then
  \[ A = \diag(h_1\varphi_r^{s_1},\ldots,h_\theta\varphi_r^{s_\theta},
         h_{\theta+1}\varphi_r^{s},\ldots,h_l\varphi_r^{s}) \cdot
    \begin{pmatrix}
     u_{11} & \cdots & u_{1\theta} & \varphi_r u_{1,\theta+1} & \cdots & \varphi_r u_{1l} \\
     \vdots & \ddots & \vdots & \vdots & \ddots & \vdots \\
     u_{\theta1} & \cdots & u_{\theta\theta} & \varphi_r u_{\theta,\theta+1} & \cdots & \varphi_r u_{\theta l} \\
     u_{\theta+1,1} & \cdots & u_{\theta+1,\theta} & \varphi_r u_{\theta+1,\theta+1} & \cdots & \varphi_r u_{\theta+1,l} \\
     \vdots & \ddots & \vdots & \vdots & \ddots & \vdots \\
     u_{l1} & \cdots & u_{l\theta} & \varphi_r u_{l,\theta+1} & \cdots & \varphi_r u_{ll}
    \end{pmatrix}.\]
   Set
   \[ U_{\theta\theta} = \begin{pmatrix} u_{11} & \cdots & u_{1\theta} \\ \vdots & \ddots & \vdots \\ u_{\theta1} & \cdots & u_{\theta\theta} \end{pmatrix}.\]
   Let $\det(U_{\theta\theta}) = u \in \R_r$. We assert that $u$ is a nonzero constant in $\R_r$. If otherwise, there exists $\vec{\omega}=(\omega_1,\ldots,\omega_{r-1}, \omega_{r+1},\ldots,\omega_n)\in \overline{\K}^{n-1}$ such that $u(\vec{\omega}) = 0$. Since $d_\theta(A) = h_1\cdots h_\theta \varphi_r^{s_1+\cdots+s_\theta}$ and $s_\theta < s$, $\varphi_r$ divides all $\theta\times \theta$ reduced minors of $A$ with the exception of $u$. Let $\omega_r = f_r(\omega_1,\ldots,\omega_{r-1})$. Then $(\omega_1,\ldots,\omega_{r-1},\omega_{r},\omega_{r+1},\ldots,\omega_n)\in \mathds{V}(\J_\theta(A))$. This contradicts the fact that $\J_\theta(A) = \R$. Therefore, $U_{\theta\theta}\in \GL_{\theta}(\R_r)$. Then there exists $V_{\theta\theta}\in \GL_{\theta}(\R_r)$ such that $U_{\theta\theta} V_{\theta\theta} = \mathbf{I}_\theta$. It follows that
   \[ A \cdot \diag(V_{\theta\theta}, \mathbf{I}_{l-\theta}) =  \begin{pmatrix}
     h_1\varphi_r^{s_1} &   &   & h_1\varphi_r^{s_1+1} u_{1,\theta+1} & \cdots & h_1\varphi_r^{s_1+1} u_{1l} \\ & \ddots &  & \vdots & \ddots & \vdots \\
      &  & h_\theta\varphi_r^{s_\theta} & h_\theta\varphi_r^{s_\theta+1} u_{\theta,\theta+1} & \cdots & h_\theta\varphi_r^{s_\theta+1} u_{\theta l} \\
     h_{\theta+1}\varphi_r^{s}v_{\theta+1,1} & \cdots & h_{\theta+1}\varphi_r^{s}v_{\theta+1,\theta} & h_{\theta+1}\varphi_r^{s+1} u_{\theta+1,\theta+1} & \cdots & h_{\theta+1}\varphi_r^{s+1} u_{\theta+1,l} \\ \vdots & \ddots & \vdots & \vdots & \ddots & \vdots \\
     h_l\varphi_r^{s}v_{l1} & \cdots & h_l\varphi_r^{s}v_{l\theta} & h_l\varphi_r^{s+1} u_{l,\theta+1} & \cdots & h_l\varphi_r^{s+1} u_{ll}
    \end{pmatrix},\]
    where $v_{ij}\in \R_r$ for $\theta+1\leq i \leq l$ and $1\leq j \leq \theta$. Clearly, $\diag(V_{\theta\theta}, \mathbf{I}_{l-\theta})\in \GL_l(\R_r)$. By performing finitely many elementary row and column operations over $\R$ on $A \cdot \diag(V_{\theta\theta}, \mathbf{I}_{l-\theta})$, we obtain
  \begin{equation}\label{501-lemma-equ-7}
   A\sim_{\R} \begin{pmatrix}
     h_1\varphi_r^{s_1} &   &   &   &   &   \\ & \ddots &  &   &   &  \\
      &  & h_\theta\varphi_r^{s_\theta} &   &   &   \\
      &   &   & h_{\theta+1}\varphi_r^{s+1} v_{\theta+1,\theta+1} & \cdots & h_{\theta+1}\varphi_r^{s+1} v_{\theta+1,l} \\
         &   &   & \vdots & \ddots & \vdots \\
       &   &   & h_l\varphi_r^{s+1} v_{l,\theta+1} & \cdots & h_l\varphi_r^{s+1} v_{ll}
    \end{pmatrix} \triangleq A_1,
  \end{equation}
    where $v_{ij}\in \R_r$ for $\theta+1\leq i,j \leq l$. Let
    \[B = \begin{pmatrix}
         v_{\theta+1,\theta+1} & v_{\theta+1,\theta+2} & \cdots &  v_{\theta+1,l} \\
         \frac{h_{\theta+2}}{h_{\theta+1}}v_{\theta+2,\theta+1} & \frac{h_{\theta+2}}{h_{\theta+1}}v_{\theta+2,\theta+2} & \cdots & \frac{h_{\theta+2}}{h_{\theta+1}}v_{\theta+2,l} \\
         \vdots & \vdots & \ddots & \vdots \\
        \frac{h_{l}}{h_{\theta+1}} v_{l,\theta+1} & \frac{h_{l}}{h_{\theta+1}} v_{l,\theta+2} &\cdots & \frac{h_{l}}{h_{\theta+1}} v_{ll}
    \end{pmatrix}.\]
  Then it follows from Equation \eqref{501-lemma-equ-7} that
  \begin{equation}\label{501-lemma-equ-8}
   A_1 = \diag(h_1\varphi_r^{s_1},\ldots,h_\theta\varphi_r^{s_\theta}, h_{\theta+1}\varphi_r^{s+1},\ldots,h_{\theta+1}\varphi_r^{s+1}) \cdot \diag(\mathbf{I}_\theta, B).
  \end{equation}
  Since $A\sim_{\R} A_1$, by Lemma \ref{422-lemma-1} we obtain
   \[d_i(B) = \frac{h_{\theta+1}\cdots h_{\theta+i}}{h_{\theta+1}^i} ~ \text{and} ~ \J_i(B) = \J_{\theta+i}(A)=\R ~ \text{for} ~ i=1,\ldots,l-\theta.\]
  By the induction hypothesis, we have
  \begin{equation}\label{501-lemma-equ-9}
   B\sim_{\R} \diag(1,\frac{h_{\theta+2}}{h_{\theta+1}},\ldots,\frac{h_{l}}{h_{\theta+1}}).
  \end{equation}
  Combining Equations \eqref{501-lemma-equ-8} and \eqref{501-lemma-equ-9}, we have $A\sim_{\R} S$.

  \textbf{Third Case:} there exists an integer $\tau$ with $1\leq \tau \leq \theta-1$ such that
  \[s_1\leq \cdots \leq s_{\tau} < s_{\tau+1}  = \cdots = s_\theta = s.\]

  Let $U_\theta = (u_{ij})_{l\times l}$, where $u_{ij}\in \R_r$ for $1\leq i,j\leq l$. Then
  \[ A = \begin{pmatrix}
     h_1\varphi_r^{s_1}u_{11} & \cdots & h_1\varphi_r^{s_1}u_{1\theta} & h_1\varphi_r^{s_1+1}u_{1,\theta+1} & \cdots & h_1\varphi_r^{s_1+1}u_{1l} \\
     \vdots & \ddots & \vdots & \vdots & \ddots & \vdots \\
     h_{\tau}\varphi_r^{s_\tau}u_{\tau1} & \cdots & h_{\tau}\varphi_r^{s_\tau}u_{\tau \theta} & h_{\tau}\varphi_r^{s_\tau+1}u_{\tau,\theta+1} & \cdots & h_{\tau}\varphi_r^{s_\tau+1}u_{\tau l} \\
     h_{\tau+1}\varphi_r^{s}u_{\tau+1,1} & \cdots & h_{\tau+1}\varphi_r^{s}u_{\tau+1,\theta} & h_{\tau+1}\varphi_r^{s+1}u_{\tau+1,\theta+1} & \cdots & h_{\tau+1}\varphi_r^{s+1}u_{\tau+1,l} \\
     \vdots & \ddots & \vdots & \vdots & \ddots & \vdots \\
     h_l\varphi_r^{s}u_{l1} & \cdots & h_l\varphi_r^{s}u_{l\theta} & h_l\varphi_r^{s+1}u_{l,\theta+1} & \cdots & h_l\varphi_r^{s+1}u_{ll}
  \end{pmatrix}.\]
  Let $U_{\tau \theta}$ be the $\tau\times \theta$ submatrix formed by the first $\tau$ rows and the first $\theta$ columns of $U$. We assert that $U_{\tau \theta}$ is a ZLP matrix. If otherwise, there exists $\vec{\omega}=(\omega_1,\ldots,\omega_{r-1}, \omega_{r+1},\ldots,\omega_n)\in \mathds{V}(\I_{\tau}(U_{\tau \theta}))$. Let $\mathcal{C}_1$ be the set formed by all $\tau\times \tau$ minors of $U_{\tau \theta}$, and let $\mathcal{C}_2$ be the set formed by all $\tau\times \tau$ reduced minors of $A$. Then $\langle \mathcal{C}_1 \rangle_{\R_r} = \I_{\tau}(U_{\tau \theta})$ and $\langle \mathcal{C}_2 \rangle_{\R} = \J_{\tau}(A)$. Since $d_{\tau}(A) = h_1\cdots h_{\tau}\varphi_r^{s_1+\cdots+s_{\tau}}$, $\mathcal{C}_1$ is a part of $\mathcal{C}_2$. For any $q\in \mathcal{C}_2 \setminus \mathcal{C}_1$, it follows from $s_{\tau} < s$ that $\varphi_r\mid q$. Set $\omega_r = f_r(\omega_1,\ldots,\omega_{r-1})$. Then $q(\omega_1,\ldots,\omega_{r-1},\omega_{r},\omega_{r+1},\ldots,\omega_n)=0$. This implies that \[(\omega_1,\ldots,\omega_{r-1},\omega_{r},\omega_{r+1},\ldots,\omega_n)
  \in\mathds{V}(\J_{\tau}(A)).\]
  This contradicts the fact that $\J_{\tau}(A) = \R$. Therefore, $U_{\tau \theta}$ is a ZLP matrix. According to the Quillen-Suslin theorem, there exists $V_{\theta\theta}\in \GL_{\theta}(\R_r)$ such that $U_{\tau \theta} V_{\theta\theta} = (\mathbf{I}_{\tau}, 0_{\tau\times (\theta-\tau)})$. By performing finitely many elementary row and column operations over $\R$ on $A\cdot \diag(V_{\theta\theta}, \mathbf{I}_{l-\theta})$, we obtain
  \[A \sim_{\R} \diag(h_1\varphi_r^{s_1},\ldots,h_{\tau}\varphi_r^{s_\tau}, h_{\tau+1}\varphi_r^{s},\ldots,h_{\tau+1}\varphi_r^{s})\cdot \diag(\mathbf{I}_{\tau},B) \triangleq A_1,\]
  where
  \[B = \begin{pmatrix} v_{\tau+1,\tau+1} & \cdots & v_{\tau+1,\theta} & \varphi_rv_{\tau+1,\theta+1} &  \cdots & \varphi_rv_{\tau+1,l} \\
  \frac{h_{\tau+2}}{h_{\tau+1}}v_{\tau+2,\tau+1} & \cdots & \frac{h_{\tau+2}}{h_{\tau+1}}v_{\tau+2,\theta} & \frac{h_{\tau+2}}{h_{\tau+1}}\varphi_rv_{\tau+2,\theta+1} &  \cdots & \frac{h_{\tau+2}}{h_{\tau+1}}\varphi_rv_{\tau+2,l} \\
  \vdots & \ddots & \vdots & \vdots & \ddots & \vdots \\
  \frac{h_{l}}{h_{\tau+1}}v_{l,\tau+1} & \cdots & \frac{h_{l}}{h_{\tau+1}}v_{l,\theta} & \frac{h_{l}}{h_{\tau+1}}\varphi_rv_{l,\theta+1} &  \cdots & \frac{h_{l}}{h_{\tau+1}}\varphi_rv_{ll}
  \end{pmatrix},\]
  $v_{ij}\in \R_r$ for $\tau+1\leq i,j \leq l$. By Lemma \ref{422-lemma-1}, we have $\J_i(B)=\R$ for $i=1,\ldots,l-\tau$, and
  \[d_i(B) = \begin{cases}
               \frac{h_{\tau+1}\cdots h_{\tau+i}}{h_{\tau+1}^i}, & i= 1, \ldots, \theta-\tau; \\
               \frac{h_{\tau+1}\cdots h_{\tau+i}\varphi_r^{i-(\theta-\tau)}}{h_{\tau+1}^i}, & i = \theta-\tau+1,\ldots,l-\tau.
             \end{cases}
  \]
  It follows that the Smith normal form of $B$ is
  \begin{equation*}\label{501-lemma-equ-10}
   S_B = \diag(1, \frac{h_{\tau+2}}{h_{\tau+1}},\ldots,\frac{h_{\theta}}{h_{\tau+1}} ,\frac{h_{\theta+1}}{h_{\tau+1}}\varphi_r, \ldots, \frac{h_{l}}{h_{\tau+1}} \varphi_r).
  \end{equation*}
  Let $V_{l-\tau}= (v_{ij})\in \M_{(l-\tau)\times (l-\tau)}(\R_r)$, where $\tau+1\leq i,j \leq l$. Then
  \begin{equation*}\label{501-lemma-equ-11}
   B = \diag(1, \frac{h_{\tau+2}}{h_{\tau+1}},\ldots,\frac{h_{\theta}}{h_{\tau+1}} ,\frac{h_{\theta+1}}{h_{\tau+1}}, \ldots, \frac{h_{l}}{h_{\tau+1}})\cdot V_{l-\tau} \cdot \diag(\underbrace{1,\ldots,1}_{\theta-\tau},\varphi_r,\ldots,\varphi_r).
  \end{equation*}
  It is easy to verify that $V_{l-\tau}\in \GL_{l-\tau}(\R_r)$. Adopting the proof technique from the \textbf{First Case}, we obtain $B\sim_{\R} S_B$. Therefore, $A\sim_{\R} S$.
 \end{proof}

\end{appendix}

\end{document}